\def\Cal{\mathcal}
\def\H{{\Cal H}}
\def\P{{\Cal P}}
\def\bbr{{\Bbb R}}
\def\bbh{{\Bbb H}}
\def\bbc{{\Bbb C}}
\def\bbe{{\Bbb E}}
\def\bbs{{\Bbb S}}
\def\bbb{{\Bbb B}}
\def\hn{\bbh^{n}}
\def\const{{\hbox{\rm const}}}
\def\gnk{G_{n,k}}
\def\rn{\bbr^n}
\def\part{\partial}
\def\intl{\int\limits}
\def\b{\beta}
\def\Gam{\Gamma}
\def\Om{\Omega}
\def\a{\alpha}
\def\om{\omega}
\def\Sig{\Sigma}
\def\del{\delta}
\def\vp{\varphi}
\def\gam{\gamma}
\def\Lam{\Lambda}
\def\sig{\sigma}
\def\lam{\lambda}
\def\z{\zeta}
\def\th{\theta}
\def\e{\varepsilon}
\def\t{\tau}
\def\snm1{\bbs^{n-1}}
\font\frak=eufm10
\def\fr#1{\hbox{\frak #1}}
\def\frH{\fr{H}}
\def\sh{\sinh }
\def\ch{\cosh }
\newtheorem{theorem}{Theorem}[section]
\newtheorem{lemma}[theorem]{Lemma}
\theoremstyle{definition}
\newtheorem{example}[theorem]{Example}
\theoremstyle{remark}
\newtheorem{remark}[theorem]{Remark}
\theoremstyle{corollary}
\newtheorem{corollary}[theorem]{Corollary}
\newtheorem{proposition}[theorem]{Proposition}
\numberwithin{equation}{section}
\newcommand{\be}{\begin{equation}}
\newcommand{\ee}{\end{equation}}
\newcommand{\bea}{\begin{eqnarray}}
\newcommand{\eea}{\end{eqnarray}}
\newcommand{\Bea}{\begin{eqnarray*}}
\newcommand{\Eea}{\end{eqnarray*}}
\def\sideremark#1{\ifvmode\leavevmode\fi\vadjust{\vbox to0pt{\vss% the remark
 \hbox to 0pt{\hskip\hsize\hskip1em%                          will appear only
\vbox{\hsize2cm\tiny\raggedright\pretolerance10000%          on the side
 \noindent #1\hfill}\hss}\vbox to8pt{\vfil}\vss}}}%
\begin{document}

%\title[ Spherical Means ]
%{Spherical Means over Tangent Spheres and Related Fractional Integrals}

\title[Fractional Integrals ]
{Fractional Integrals and  Tangency Problems in Integral Geometry}

\author{ B. Rubin}

\address{Department of Mathematics, Louisiana State University, Baton Rouge,
Louisiana 70803, USA}
\email{borisr@lsu.edu}

\subjclass[2010]{Primary 44A12; Secondary 37E30}

%\date{December 27,  2024 }

\maketitle

\begin{abstract}
 Many known Radon-type transforms of symmetric (radial or zonal) functions are represented by one-dimensional Riemann-Liouville fractional integrals or their modifications.  The present article contains new examples of such transforms in the Euclidean, spherical, and hyperbolic settings, when integration is performed over lower-dimensional geodesic  spheres or cross-sections, which are tangent to a given surface. Simple inversion formulas are obtained and admissible  singularities at the tangency points are studied. Possible  applications to  the half-ball screening in mathematical tomography and some difficulties related to the general (not necessarily symmetric) case are discussed.
 \end{abstract}

 \vskip 0.2 truecm

 \centerline{\large Contents}

 \vskip 0.2 truecm

 1. Introduction.

 2. Some facts from fractional calculus.

 3. The Euclidean ball and its exterior.

 4. Tangent chords of the half-ball.

 5. Tangent cross-sections of the unit sphere.

 6. Hyperbolic slices.

 7. Tangent spheres in the half-space.

  \vskip 0.2 truecm

\section {Introduction}

The  Riemann-Liouville fractional integrals  and derivatives have proved to be a powerful tool in the study  of diverse Radon-type transforms in integral geometry; see, e.g., \cite {Ru15, Ru24} and references therein.  The present article contains new examples of such transforms, when integration is performed over tangent manifolds, like spheres or chords of different dimensions and in different settings, provided that the integrated functions are invariant under the relevant group of transformations.  The choice of the examples is motivated by potential application in mathematical tomography and the intimate connection to the Cauchy problem for the Euler-Poisson-Darboux equations on constant curvature spaces with the initial data on some characteristic surfaces.

In Section \ref{frac}, we recall some facts from fractional calculus. More information can be found in \cite {Ru15, Ru24, SKM}.  Further consideration includes integral transforms over geodesic spheres of different dimensions, which are tangent to a fixed sphere in a given constant curvature space. Specifically, Section \ref {noni} deals with spherical means of functions on the Euclidean ball and its exterior. Section \ref{onclu} is devoted to integral transforms over cross-sections  (or {\it $k$-chords}) of the  $n$-dimensional half-ball $\bbb_{+}$ by  $k$-dimensional affine planes, which are tangent to the boundary of $\bbb_{+}$. This consideration might be of interest from the point of view of medical tomography, when the examined region is located inside the  half-ball. In Section \ref{rence} we consider the geodesic spherical means of functions on the unit sphere, when integration is performed over spherical sections tangent to the equator or, more generally,  to a given parallel of  latitude.  Similar problems in the real hyperbolic space (for the hyperboloid model) are studied in Section \ref{respe}. Section \ref{half} deals with spherical means of functions on the $n$-dimensional  Euclidean half-space when integration is performed over $k$-dimensional spheres which are tangent to the boundary and lie in the relevant vertical affine plane.

A consideration of  integral-geometric problems for tangent manifolds is not new; see, e.g. Gindikin \cite{Gin},  Goncharov \cite{Gon},   Palamodov \cite{P1}. To the best of our knowledge, known inversion formulas are pretty involved. They contain generalized functions and the corresponding integrals should be understood in the sense of regularization. This approach imposes inevitable restrictions on functions, which must be infinitely differentiable and vanishing (or rapidly decreasing) near the boundary.
On the other hand, the original spherical means are well defined on Lebesgue integrable functions, which may have singularities  at the tangency points. The methods of the above works are inapplicable to such functions.

In the present article, we essentially restrict the setting of the problem by imposing additional invariance conditions on our functions. This restriction yields simple explicit inversion formulas in terms of one-dimensional  Riemann-Liouville fractional integrals and derivatives. Such formulas might be of interest on their own right and useful in applications, e.g., in mathematical tomography. Our approach also allows one to obtain explicit weighted equalities for the relevant spherical means of arbitrary functions.  These equalities give precise information about existence of the spherical means for specific classes of functions with prescribed behavior near the boundary.

The results of this article show that derivation of the explicit inversion formulas for tangent spherical means without additional restriction on the class of functions may be very complicated. As an example, we illustrate the complexity of this problem for tangent spherical means in the half-space (see Section \ref{Kern}), when the Fourier transform technique can be applicable.

It is worth mentioning one more interesting class of spherical means, which occurs when the dimension of the boundary manifold is higher than the dimension of the tangent spheres over which the integration is performed. In this case, the tangent spheres can be oblique with respect to the tangent hyperplane to the boundary; cf. e.g., the half-space model or the Euclidean ball model.  Consideration of such `oblique spherical means' lies beyond the scope of the present paper.

\vskip 0.2 truecm

\noindent {\bf Some Notation.} In the following, $\bbr^n$ is the real Euclidean space with the inner product $x\cdot y= x_1 y_1 + \ldots + x_n y_n$;
%$\bbb\equiv \bbb^n$ is the unit ball in  $\rn$,
$\bbs^{n-1} = \{ x \in \bbr^n: \ |x| =1 \}$ is the unit sphere
in $\bbr^n$; $\sigma_{n-1} =  2\pi^{n/2} \big/ \Gamma (n/2)$  is the
surface area of $\bbs^{n-1}$.
More notation will be established in the corresponding sections.

\section  {Some Facts from Fractional Calculus}\label{frac}

  Let $f$ be an integrable function on a finite interval
$[a,b]$. The Riemann-Liouville fractional integrals of order $\alpha >0$ are  defined by
\be\label{rlfil} (I^\a_{a +}f ) (x)= \frac{1}{\Gamma (\alpha)}
\intl^x_a \!\!\frac{f(y) \,dy} {(x \!-\! y)^{1- \alpha}}, \ee
\be\label{rlfir} (I^\a_{b -}f) (x)= \frac{1}{\Gamma (\alpha)}
\intl^b_x \!\!\frac{f (y) \,dy} {(y\! - \!x)^{1-\a}}. \ee  Here $ a < x< b$, $ \;\Gamma (\alpha)$ is the Euler gamma function.
If $\, 0<\a<1$,  the corresponding fractional derivatives have the form
\bea\label{frd0} (\Cal D^\a_{a +} \vp)(x)&=&\frac{1}{\Gam (1-\a)}\,
\frac{d}{dx}\,\intl_a^x \frac{\vp(y)\,
dy}{(x-y)^\a}, \\
\label{frd0-}(\Cal D^\a_{b-} \vp)(x)&=&-\frac{1}{\Gam (1-\a)}\,
\frac{d}{dx}\,\intl_x^b \frac{\vp(y)\, dy}{(y-x)^\a}. \eea
In the general case  $\alpha > 0$, we set $ \alpha = m + \alpha_0$, where $0 \le  \alpha_0 < 1$, $  m =
[\alpha]$ (the integer part of $\a$), and write
\be\label{frd+}(\Cal D^\a_{a +} \vp)(x) = (d/dx)^{m +1} (I^{1 -
\alpha_0}_{a +} \vp)(x), \ee \be\label{frd-} (\Cal D^\a_{b -} \vp)(x)= (-d/d
x)^{m +1} (I^{1 - \alpha_0}_{b -} \vp)(x). \ee

\begin{lemma}\label{l32} {\rm (See, e.g. \cite[Lemma 2.4]{Ru15})}
 If $f \in L^1 (a,b),\;  \alpha > 0$, then
 \be\label{obr}
(\Cal D^\alpha_{a +} I^\alpha_{a +} f) (x) = f (x), \qquad (\Cal
D^\alpha_{b -} I^\alpha_{b -} f ) (x) = f (x)\ee for almost all $x
\in [a,b]$. \end{lemma}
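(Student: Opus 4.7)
The plan is to reduce both identities to the fundamental theorem of calculus via the semigroup property of the Riemann--Liouville integrals. By the substitution $y\mapsto a+b-y$ the right-sided case is mirror-symmetric to the left-sided one, so I would treat only $(\Cal D^{\alpha}_{a+}I^{\alpha}_{a+}f)(x)=f(x)$.

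First I would establish (or recall) the semigroup identity
\be\label{semi}
 (I^{\alpha}_{a+}I^{\beta}_{a+}f)(x)=(I^{\alpha+\beta}_{a+}f)(x), \qquad \alpha,\beta>0,
\ee
for $f\in L^1(a,b)$. This is a direct computation: after interchanging the order of integration by Fubini (justified because the integrand is in $L^1$ of the triangle $\{a<y<t<x\}$ against $f$) and performing the substitution $t=y+(x-y)s$, the inner integral reduces to the Beta function $B(\alpha,\beta)=\Gamma(\alpha)\Gamma(\beta)/\Gamma(\alpha+\beta)$, yielding \eqref{semi}.

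Next, write $\alpha=m+\alpha_{0}$ with $m=[\alpha]$ and $0\le\alpha_{0}<1$, so that $1-\alpha_{0}>0$. Combining the definition \eqref{frd+} with \eqref{semi} gives
\bea*
 (\Cal D^{\alpha}_{a+}I^{\alpha}_{a+}f)(x)
 &=&\left(\frac{d}{dx}\right)^{\!m+1}\!(I^{1-\alpha_{0}}_{a+}I^{\alpha}_{a+}f)(x)\\
 &=&\left(\frac{d}{dx}\right)^{\!m+1}\!(I^{m+1}_{a+}f)(x),
\eea*
since $(1-\alpha_{0})+\alpha=m+1$. It remains to show that for $g\in L^1(a,b)$ one has $(d/dx)^{m+1}I^{m+1}_{a+}g=g$ a.e. This is a classical consequence of Lebesgue differentiation: $I^{1}_{a+}g(x)=\int_{a}^{x}g(y)\,dy$ is absolutely continuous with derivative $g$ a.e., and the same then applies inductively to $I^{m+1}_{a+}g=I^{1}_{a+}(I^{m}_{a+}g)$, since each successive $I^{k}_{a+}g$ is continuous (and a fortiori in $L^{1}$) on $[a,b]$.

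The main technical obstacle is the semigroup identity \eqref{semi}, because $f$ is only assumed to be in $L^1$; one must verify that the double integral is absolutely convergent, which is where Tonelli's theorem applied to $|f|$ and the positive kernel $(x-t)^{\alpha-1}(t-y)^{\beta-1}$ is essential. Once \eqref{semi} is available, the rest is bookkeeping with the definitions and the standard differentiation theorem for Lebesgue integrals.
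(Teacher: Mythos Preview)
Your argument is correct and is precisely the standard proof: the semigroup identity $I^{\alpha}_{a+}I^{\beta}_{a+}=I^{\alpha+\beta}_{a+}$ (via Fubini and the Beta integral) reduces $\Cal D^{\alpha}_{a+}I^{\alpha}_{a+}f$ to $(d/dx)^{m+1}I^{m+1}_{a+}f$, which equals $f$ a.e.\ by iterated Lebesgue differentiation. The paper itself does not supply a proof of this lemma; it merely quotes it from \cite[Lemma 2.4]{Ru15}, where exactly this argument is given, so there is nothing to compare and your write-up stands on its own.
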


\section {The Euclidean Ball and Its Exterior}\label {noni}

\subsection  {Preliminaries}
Let  $x \in \rn$,
\[\bbb^n_+ =\{x: 0<|x|<1\}, \quad \bbb^n_- =\{x:  |x|>1\}, \quad \bbs^{n-1} =\{x:  |x|=1\}.\]
Given  an integer $k$,  $2\le k\le n$,  consider the set of all $(k-1)$-dimensional spheres $\sig$ tangent to  $\bbs^{n-1}$.
Every such $\sig$  determines a unique $k$-dimensional plane $\xi (\sig)$ containing $\sig$. If $k=n$,  this plane  coincides with $\rn$.
If $k<n$,  one should distinguish the `{\it central tangency}', when $\xi (\sig)$ is a linear subspace of $\rn$, and the `{\it oblique tangency}', when $\xi (\sig)$ does not pass through the origin of $\rn$. We restrict our consideration to the first case and  denote by $\Sig_{n,k}$  the set of all $(k-1)$-dimensional spheres, which are tangent to  $\bbs^{n-1}$ and contained in some $k$-dimensional linear subspace of $\rn$.
 The  case of the oblique tangency  represents an intriguing open problem.

 Given a point $x\in \rn \setminus \{0\}$,
 let $ \gnk^x$ be the manifold of all $k$-dimensional linear subspaces of $\rn$ containing $x$.
Every  $\sig \in \Sig_{n,k}$, can be indexed by the pair $(x, \xi)$, where $x$ is the center of the sphere $\sig$
 and  $\xi$ is the subspace in $\gnk^x$ containing $\sig$. We distinguish the cases of the interior and exterior tangency and denote
\[
\Sig^\pm_{n,k}=\{\sig\equiv\sig (x,\xi) \in \Sig_{n,k}: x \in \bbb^n_{\pm}\}.\]
% The sets  $\Sig^{\pm}_{n,k}$ can be regarded as  fiber bundles with the corresponding bases  $\bbb^\pm$ and the canonical projection $\pi: (x,\xi) \to x$.
 %The fiber $\pi^{-1}x$ over the point $x$ is the Graassmannian $\gnk^x$.
 We equip the sets $\Sig^{\pm}_{n,k}$   with the  product measure, so that if $\sig=\sig (\theta, \xi)$, then $d\sig=dx d_* \xi$, where $dx$ is the standard Euclidean measure on $\rn$  and  $d_* \xi$ is the Haar  probability measure on  $ \gnk^x$.

Our main concern is the spherical means
\bea\label {qplusg}\!\!\! &&\!\!\!(Q_+f)(x,\xi)\!=\!\frac{1}{\sig_{k-1}}\!\intl_{{S^{n-1}\cap \xi}}\!\!\!\! f(x\!+\!(1\!-\!|x|)\,\theta) \,d\theta, \quad |x|<1, \; \xi \in \gnk^x, \qquad \\
\label {qminus} &&\!\!\!(Q_-f)(x,\xi)\!=\!\frac{1}{\sig_{k-1}}\!\intl_{{S^{n-1}\cap \xi}} \!\!\! \!f(x\!+\!(|x|\!-\!1)\,\theta) \,d\theta, \quad |x|>1, \;\xi \in \gnk^x,  \qquad \eea
over the tangent spheres $\sig (x,\xi) \in \Sig^\pm_{n,k}$, respectively. The main question is how to  reconstruct  the function  $f$ from $Q_{\pm}f$ in the  respective domain
$\bbb^n_{\pm}$.

The setting of the problem is motivated in part by the intimate connection with the classical
 Cauchy problem for the Darboux equation in PDE, when we are looking for  a function $u(x,t)$ satisfying
\begin{equation}\label{EPD.1CQs}
\Delta u-u_{tt}-\frac{n-1}{t}\,u_{t}\!
=\!0, \qquad u(x,0)\!=\!f(x),\quad u_{t}(x,0)\!=\!0.
\end{equation}%
Here $x\in \rn$, $t>0$, and $f$ is a given function. Let us consider the following inverse problem: \textit{ Given   the traces of  the solution  of  (\ref{EPD.1CQs})  on the characteristic conical surfaces
\[ C_+ \!=\!\{(x,t): |x|<1, \, t=1-|x|\}, \quad C_- \!=\!\{(x,t): |x|>1,\, t=|x|-1\},\]
  reconstruct the initial function $f(x)$. }
  If such an $f$ is found, one can compute the solution of  (\ref{EPD.1CQs}) in the entire cylinder $\{(x,t): |x|<1,\, t>0\}$ and in its exterior by the formula
  \be\label {mf}
u(x,t)=\frac{1}{\sig_{n-1}} \intl_{\bbs^{n-1}} f (x+t\theta)\, d\theta,\ee
 where $d\theta$ stands for the surface area measure on $\bbs^{n-1}$;
see, e.g.,  \cite [p. 699]{CH}. Thus we arrive at the integral-geometric inversion problem for the operators $Q_{\pm}f$ in the special case $k=n$.

The setting of the problem is not new and is known in a rather general context. Inversion problems for operators of integral geometry on real quadrics
 were investigated by Gindikin \cite{Gin} (see also \cite[p. 162]{GGG1}) by making use of the kappa-operator and generalized functions. A more general case of tangent spheres to compact manifolds was studied by Goncharov \cite{Gon}, who invoked the theory of $D$-modules. Another approach to spherical means over $(n-1)$-dimensional tangent spheres to a manifold in $\rn$ was suggested by Palamodov  \cite [Section 7.8]{P1}, who considered the case of the interior tangency. In our notation, his inversion formula for the operator $Q_+$ and $n$ even can be written as follows:
 \be\label{Pal}
f(x)=c\, \intl_{\bbb^n_+} [q^{-n}] \, \frac{1-|x|^2 -2 (x \cdot y) +2 |y||x|^2}{1-|y|}\, \frac{(Q_+f)(y)}{|y|}\, dy,\ee
where $c=\const$, $q=(1-|y|)^2 -|y-x|^2$, and  $[q^{-n}]$ is the relevant distribution; see  \cite [p. 13]{P1}. If $n$ is odd the inversion formula looks similarly, with $[q^{-n}]$ replaced by the derivative of the delta distribution $\del^{(n-1)}(q)$; see \cite [p. 133]{P1} for details.

The afore-mentioned approaches are very important from the theoretical point of view. However, they inevitably impose very restrictive conditions on the functions $f$ assuming them to be smooth and excessively good near the boundary. For instance, in \cite[Theorem 1.1]{Gon} it is assumed that $f$ vanishes identically near the  surface of tangency. Note also that the integral in (\ref{Pal}) includes the distribution $[q^{-n}]$. It does not exist in the usual Lebesgue sense and must be interpreted  in the framework of the distribution theory; cf., e.g., \cite [Sections 1.3, 1.4]{P1}, \cite{GS1}.  The corresponding explicit formula is pretty sophisticated, even on `good' functions and is inapplicable if $f$ is not differentiable.
%On the other hand, the spherical mean (\ref{mf}) is well defined for arbitrary locally integrable function $f$. Taking into account the geometric tangency condition  in  (\ref{qplus}) and (\ref{qminus}),
It is natural to ask the following

 \vskip 0.2 truecm
\noindent {\bf Question:} {\it Which minimal conditions should be imposed on the locally integrable function $f$ to provide the existence of the integrals $Q_{\pm}f$ in the a.e. sense and the relevant pointwise reconstruction formulas?}

 \vskip 0.2 truecm
Questions of this kind are  well investigated for totally geodesic Radon transforms on constant curvature spaces and for the horospherical transforms in the real hyperbolic space, where the unknown function $f$
belongs to the corresponding $L^p$ space; see, e.g., \cite {Ru15, Ru24}.
%This approach relies on the classical Funk-Radon-Helgason machinery, when the problem is first considered on radial (or zonal) functions with the subsequent transition to the general case by making use of the averaging %procedure and the relevant transitive group of automorphisms. It is still unclear whether this algorithm is applicable to our `tangent spherical means' (\ref{qplus}) and (\ref{qminus}). However, the first step can be %performed, and this is the topic of our notes.

 \begin{remark} \label {rem 1.1}
 There is one more important issue to be mentioned, which was apparently  missed in the previous publications.
  In fact, the tangent spheres can be of two kinds, depending on whether their radius  is less than $1/2$ or greater than $1/2$. In the first case,  the center of the ball $\bbb^n_+$ lies outside of the tangent sphere, while, in the second case, it is inside. If our goal is to reconstruct $f$ on $\bbb^n_+$,  it suffices to consider only one of these cases. Moreover,
it is natural to expect that the separating sphere $|x|=1/2$ may cause additional singularities of $Q_+f$.
  We address this observation below.
\end{remark}

 \begin{remark} \label {rem 1.2} The unit ball $|x|<1$  can be replaced by any ball $|x|<a$, $a>0$. The limiting case $a=0$ for the operator $Q_-$, corresponds to spheres through the origin. This  case for $n=2$
was first considered by Cormack \cite{Co63} who obtained a formal inversion of the corresponding operator in terms of the Fourier series. Regarding  further developments of this case (for $k=n\ge 2$) by  Quinto \cite{Q82, Q83} and other authors, see \cite [Sections 7.1, 7.4]{Ru15} and references therein. %Our consideration includes all $2\le k \le n$. \edz {Mention spheres through the origin in Sec. 3.2.}
\end{remark}

It is important to note that the operators $Q_{\pm}$ are $O(n)$-equivariant. Using this fact, it will be shown that if $f$ is a radial function, then $(Q_{\pm} f)(x,\xi)$  are represented  by one-dimensional fractional integrals, which can be explicitly inverted. The  relevant support theorems then easily follow.

\subsection{The Interior Tangency}

%The spherical mean $\vp=Q_+f$ of a radial function $f$ enjoys a number of remarkable properties. By (\ref{qplus}), the operator $Q_+$  is  $O(n)$-equivariant. Moreover, the function $\vp$
% is symmetric with respect to the sphere $|x|=1/2$, as can be seen  from  the following lemma.

  \begin{lemma} \label{lem1} Let $f(y)=f_0 (|y|)$, $y\in \bbb^n_+$, $f_1 (s)=f_0(s) (1-s^2)^{(k-3)/2}$, $2\le k\le n$. Then  for  all $|x|<1$ and $\xi \in \gnk^x$,
 \be\label{forbq} (Q_+f)(x,\xi)= \Phi_+ (t), \qquad  \Phi_+ (t) =\frac{\Gam (k/2)}{\pi^{1/2}}\,\left (\frac{1\!-\!t^2}{2}\right)^{2-k}\!\!F_{+}(t),\ee
where  $ t= 2||x|-1/2|\in [0,1)$,
\be\label{forbqz} F_{+}(t)=\frac{2}{ \Gam ((k-1)/2)}\intl_{t}^1 (s^2 - t^2)^{(k-3)/2}f_1 (s)  \, s\, ds.\ee
%\[ F_{+}(t)=(I_{1-, 2}^{(k-1)/2} f_1)(t)\equiv\frac{2}{ \Gam ((k-1)/2)}\intl_{t}^1 (s^2 - t^2)^{(k-3)/2}f_1 (s)  \, s\, ds.\]
 It is assumed
that the integral   (\ref{forbqz}) exists in the Lebesgue sense.
\end{lemma}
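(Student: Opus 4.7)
The plan is to reduce $(Q_+f)(x,\xi)$ to a one-dimensional integral in the radial variable $s=|y|$ by exploiting the radial symmetry of $f$ together with the fact that on $S^{n-1}\cap\xi$ the integrand depends on $\theta$ only through the scalar product $\theta\cdot(x/|x|)$. Since $f(y)=f_0(|y|)$ and $Q_\pm$ are $O(n)$-equivariant, I may choose coordinates in $\xi$ so that $x=re_1$, where $r=|x|$ and $e_1$ is a fixed unit vector in $\xi$. Identifying $S^{n-1}\cap\xi$ with $S^{k-1}$ and parametrizing $\theta$ by its polar angle $\phi\in[0,\pi]$ with respect to $e_1$, the standard polar-coordinate formula on $S^{k-1}$ gives
\[
(Q_+f)(x,\xi)=\frac{\sigma_{k-2}}{\sigma_{k-1}}\int_0^\pi f_0(s(\phi))\,\sin^{k-2}\phi\,d\phi,
\]
with $s(\phi)^2=r^2+(1-r)^2+2r(1-r)\cos\phi$. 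One checks that $s$ decreases monotonically from $1$ at $\phi=0$ to $|2r-1|=t$ at $\phi=\pi$.

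The heart of the argument is the change of variable $\phi\mapsto s$. A short algebraic computation yields
\[
1-\cos\phi=\frac{1-s^2}{2r(1-r)},\qquad 1+\cos\phi=\frac{s^2-t^2}{2r(1-r)},
\]
and consequently the factorization
\[
\sin^2\phi=\frac{(1-s^2)(s^2-t^2)}{4r^2(1-r)^2},\qquad \sin\phi\,d\phi=-\frac{s\,ds}{r(1-r)}.
\]
Writing $\sin^{k-2}\phi\,d\phi=(\sin^2\phi)^{(k-3)/2}\sin\phi\,d\phi$ and substituting converts the above integral into
\[
\frac{1}{2^{k-3}(r(1-r))^{k-2}}\int_t^1 f_0(s)(1-s^2)^{(k-3)/2}(s^2-t^2)^{(k-3)/2}\,s\,ds,
\]
and recognizing $f_0(s)(1-s^2)^{(k-3)/2}=f_1(s)$ identifies the remaining integral with $\tfrac12\Gamma((k-1)/2)\,F_+(t)$.

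It then remains only to collect constants. The ratio $\sigma_{k-2}/\sigma_{k-1}=\Gamma(k/2)/[\pi^{1/2}\Gamma((k-1)/2)]$ cancels one gamma factor, and the identity $1-t^2=1-(2r-1)^2=4r(1-r)$ converts $(2r(1-r))^{-(k-2)}$ into $((1-t^2)/2)^{2-k}$, producing exactly the claimed expression $\Phi_+(t)$. The only genuinely substantive step is spotting the factorization $\sin^2\phi=(1-s^2)(s^2-t^2)/[4r^2(1-r)^2]$, which is what forces the spherical mean to collapse to an Abel-type (Riemann--Liouville) integral in the variable $s^2$; once this factorization is in place, the rest of the argument is routine manipulation of gamma ratios, and the assumption that the integral $F_+(t)$ converges in the Lebesgue sense is precisely what is needed to justify Fubini in the change of variables.
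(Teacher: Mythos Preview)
Your proof is correct and follows essentially the same route as the paper: the paper uses the slice-integration variable $t=\cos\phi$ rather than the angle $\phi$ itself, and writes $\rho=1-|x|$ instead of $r=|x|$, but the key change of variables to $s=|x+(1-|x|)\theta|$ and the factorization $1-t^2=(1-s^2)(s^2-(1-2\rho)^2)/[2\rho(1-\rho)]^2$ are identical to yours. One small quibble: your closing remark about ``Fubini'' is misplaced---no iterated integral is involved, only a one-dimensional change of variables, and the Lebesgue-integrability assumption on $F_+(t)$ simply ensures that this change of variables is legitimate.
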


 \begin{proof} Let $\rho= 1-|x|$ be the radius of the tangent sphere centered at the point $x$. By (\ref{qplusg}),
\bea (Q_+f)(x,\xi)&=&\frac{1}{\sig_{k-1}}\intl_{\bbs^{n-1}\cap \xi} f_0(|x+\rho\,\theta |)\, d\theta\nonumber\\
&=&\frac{1}{\sig_{k-1}}\intl_{\bbs^{n-1}\cap \xi} f_0 (\sqrt {|x|^2 +2\rho x\cdot\theta +\rho^2})\, d\theta\nonumber\\
&=&\frac{\sig_{k-2}}{\sig_{k-1}}\intl_{-1}^1 f_0 (\sqrt {(1-\rho)^2 +2\rho(1-\rho) t +\rho^2})\, (1-t^2)^{(k-3)/2}\, dt\nonumber\eea
(cf. \cite[f. (1.12.13)]{Ru15}). Changing variables $(1-\rho)^2 +2\rho(1-\rho) t +\rho^2=s^2$, so that
\[
t=\frac{s^2 -\rho^2 -(1-\rho)^2}{2\rho(1-\rho)}, \quad 1-t^2=\frac{(1-s^2)(s^2- (2\rho-1)^2)}{(2\rho(1-\rho))^2},\]
we obtain
\[(Q_+f)(x,\xi)=\frac{\sig_{k-2}}{\sig_{k-1}} \intl_{|1-2\rho|}^1 f_0 (s)\, \frac{(1-s^2)^{(k-3)/2} (s^2 -(1-2\rho)^2)^{(k-3)/2}}{(2\rho(1-\rho))^{k-3}}\, \frac{s\, ds}{\rho(1-\rho)}.\]
Setting $1- 2\rho =2(|x|-1/2)$, we arrive at  (\ref{forbq}).
\end{proof}

 \begin{example}\label {example 1} Let $f(x)= |x|^{1-k-2\b} (1-|x|^2)^{\b +(1-k)/2}$, $\b >0$. Then the right-hand side of (\ref{forbq}) can be explicitly computed using tables of integrals and we have
  \be\label {tly}
(Q_+f)(x,\xi)\!=\!c\,  \frac{(|x|(1\!-\!|x|))^{\b +(1-k)/2}}{ |\, |x|\!-\!1/2|^{2\b}}, \; c\!=\!\frac{\Gam (\b)\,\Gam (k/2)}{2\pi^{1/2}\Gam (\b\!+\!(k\!-\!1)/2)}.\ee
As we can see,  the operator $Q_+$, which  is symmetric with respect to the sphere $|x|=1/2$, induces a singularity of $Q_+f$ on this sphere.
\end{example}

 \begin{corollary} \label{cor1}  The average  $\tilde \vp (\xi,x)=\int_{O(n)} \vp(\gam \xi,\gam x) d\gam$  of the spherical mean $\vp=Q_+f$ is symmetric with respect to the sphere $|x|=1/2$ and is represented by the  fractional integral according to Lemma \ref{lem1}.
\end{corollary}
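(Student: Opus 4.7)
The plan is to exploit the $O(n)$-equivariance of $Q_+$ to reduce the corollary to Lemma~\ref{lem1}. First, I would verify equivariance: substituting $\theta\mapsto\gamma\theta$ in the defining integral (\ref{qplusg}) and using $|\gamma x|=|x|$ together with $\bbs^{n-1}\cap(\gamma\xi)=\gamma(\bbs^{n-1}\cap\xi)$ yields
\[
(Q_+f)(\gamma x,\gamma\xi)=\frac{1}{\sigma_{k-1}}\intl_{\bbs^{n-1}\cap\xi}f\bigl(\gamma(x+(1-|x|)\theta)\bigr)\,d\theta.
\]
Then an application of Fubini, interchanging the integration over $\gamma\in O(n)$ with that over $\theta$, gives
\[
\tilde\vp(\xi,x)=\intl_{O(n)}(Q_+f)(\gamma x,\gamma\xi)\,d\gamma=(Q_+\tilde f)(x,\xi),\qquad \tilde f(y):=\intl_{O(n)} f(\gamma y)\,d\gamma.
\]

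The next step is to observe that $\tilde f$ is $O(n)$-invariant by left-invariance of Haar measure, hence radial: $\tilde f(y)=f_0(|y|)$ for some $f_0$. Consequently, Lemma~\ref{lem1} applies to $\tilde f$ and produces
\[
\tilde\vp(\xi,x)=\Phi_+(t),\qquad t=2\bigl||x|-\tfrac{1}{2}\bigr|,
\]
with $\Phi_+$ the one-dimensional Riemann-Liouville expression given by (\ref{forbq})--(\ref{forbqz}).

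Finally, the symmetry claim follows for free: under $|x|\mapsto 1-|x|$ one has $|1-|x|-\tfrac{1}{2}|=|\tfrac{1}{2}-|x||=||x|-\tfrac{1}{2}|$, so the argument $t$ is invariant, and hence $\tilde\vp(\xi,x)$ takes the same value on the two concentric spheres of radii $r$ and $1-r$. There is no substantive obstacle: the whole argument is just an equivariance/Fubini reduction followed by an appeal to Lemma~\ref{lem1}; the only thing to be careful about is writing the equivariance in the correct form so that both arguments $(x,\xi)$ are rotated simultaneously and the averaging passes inside the spherical mean cleanly.
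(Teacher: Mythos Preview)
Your proposal is correct and is precisely the argument the paper has in mind: the corollary is stated without proof as an immediate consequence of Lemma~\ref{lem1} together with the $O(n)$-equivariance of $Q_+$, and your reduction $\tilde\vp(\xi,x)=(Q_+\tilde f)(x,\xi)$ via Fubini, followed by the observation that $\tilde f$ is radial, is exactly that. The same averaging maneuver is carried out explicitly by the paper in the proof of Lemma~\ref{lem2}, confirming that your route matches the intended one.
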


Lemma \ref{lem1}, along with the second formula in (\ref{obr}), yield the following inversion result.

\begin{theorem}\label{teo1} Let $2\le k\le n$, $f(y)=f_0 (|y|)$, $y\in \bbb^n_+$, so that $(Q_+f)(x,\xi)= \Phi_+ (t)$, $t= |2|x| -1| \in (0,1)$. We denote
\[ H(v)= \frac{\pi^{1/2}}{\Gam (k/2)}\,\left (\frac{1\!-\!v}{2}\right)^{k-2}\Phi_+ (\sqrt {v}).\]
If
 \be\label{foxb} \intl_\e^1 |f_0 (s)| \, (1-s^2)^{(k-3)/2} s\, ds <\infty \quad \text{for every} \quad \e\in (0,1),\ee
  then $f_0 (s)$ can be reconstructed for almost all  $s\in (0,1)$ by the formula
  \[
f_0 (s)=(1-s^2)^{(3-k)/2} (\Cal D^{(k-1)/2}_{1 -} H)(s^2), \]
where the fractional derivative $\Cal D^{(k-1)/2}_{1 -} H$ is defined by (\ref{frd-}).
\end{theorem}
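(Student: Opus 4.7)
The plan is to reduce the inversion of $Q_+$ on radial functions to a one-dimensional Riemann--Liouville fractional integral of order $(k-1)/2$ on the interval $[0,1]$, and then invoke Lemma~\ref{l32}.

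The first step is purely algebraic. Setting $v=t^2$ in (\ref{forbq}), I observe that the factor $\left(\tfrac{1-t^2}{2}\right)^{2-k}$ appearing in $\Phi_+(t)$ is cancelled exactly by the factor $\left(\tfrac{1-v}{2}\right)^{k-2}$ in the definition of $H$, and the constants $\Gamma(k/2)/\pi^{1/2}$ cancel as well. This yields the clean identity $H(v)=F_+(\sqrt v)$. Then, in the Abel-type integral (\ref{forbqz}), I change the variable of integration from $s$ to $u=s^2$, using $s\,ds=\tfrac12\,du$. The kernel $(s^2-v)^{(k-3)/2}$ becomes $(u-v)^{(k-3)/2}$, the limits $s\in(\sqrt v,1)$ become $u\in(v,1)$, and the factor $2$ in front of the integral is absorbed by the Jacobian, so that
\[
H(v)=\frac{1}{\Gamma((k-1)/2)}\intl_v^1 (u-v)^{(k-3)/2}\,g(u)\,du=(I^{(k-1)/2}_{1-}g)(v),
\]
where $g(u)=f_0(\sqrt u)\,(1-u)^{(k-3)/2}$. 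Thus $H$ is a right-sided Riemann--Liouville fractional integral of $g$, of order $(k-1)/2$, on $[0,1]$.

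The second step is to invert this relation by Lemma~\ref{l32}. Substituting $u=s^2$ in the hypothesis (\ref{foxb}) shows that $\intl_{\e^2}^1 |g(u)|\,du<\infty$ for every $\e\in(0,1)$, i.e.\ $g\in L^1(v_0,1)$ for every $v_0\in(0,1)$. Since $(I^{(k-1)/2}_{1-}g)(v)$ depends only on values of $g$ on $(v,1)$, the integral transform may be restricted to any subinterval $(v_0,1)$; Lemma~\ref{l32} applied on $(v_0,1)$ then gives $(\mathcal D^{(k-1)/2}_{1-}H)(v)=g(v)$ for almost every $v\in(v_0,1)$. Letting $v_0\to 0$, the identity holds for a.e.\ $v\in(0,1)$. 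Solving $g(v)=f_0(\sqrt v)(1-v)^{(k-3)/2}$ for $f_0(\sqrt v)$ and substituting $v=s^2$ yields the claimed inversion formula.

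The main subtlety I expect lies in the final step: the hypothesis (\ref{foxb}) controls $f_0$ only near the boundary $s=1$ and gives no integrability near $s=0$, so Lemma~\ref{l32}, which is stated for $g\in L^1(a,b)$ on a full interval, cannot be applied directly on $(0,1)$. The remedy is the localization just described, which is legitimate because both the spherical mean (via the Abel kernel) and the operator $I^{(k-1)/2}_{1-}$ are ``upper triangular'' in the variable $v$. Everything else --- the cancellation of the prefactor, the substitution $u=s^2$, and the identification of constants --- is straightforward bookkeeping.
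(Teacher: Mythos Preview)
Your proof is correct and follows essentially the same route as the paper: substitute $v=t^2$, $u=s^2$ to rewrite $H(v)=F_+(\sqrt v)$ as a right-sided Riemann--Liouville integral $I^{(k-1)/2}_{1-}g$ with $g(u)=f_1(\sqrt u)$, then invert via Lemma~\ref{l32}. Your localization argument on subintervals $(v_0,1)$ is a welcome addition --- the paper simply asserts that (\ref{foxb}) ``makes it possible to apply Lemma~\ref{l32}'' without spelling out that the hypothesis only yields $g\in L^1_{\mathrm{loc}}((0,1])$ rather than $g\in L^1(0,1)$.
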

\begin{proof} Setting $s^2 =u$, $t^2=v$ in (\ref{forbqz}) and denoting
\[
h(u)= f_1 (\sqrt {u}), \qquad H(v)= F_+ (\sqrt {v}),\]
we obtain $I_{1-}^{(k-1)/2} h=H$. The assumption (\ref{foxb}) makes it possible to apply Lemma \ref{l32}, which gives $h=\Cal D^{(k-1)/2}_{1 -} H$. It remains to return to the original notation.
\end{proof}
\begin{remark}\label{double} According to Lemma \ref{lem1} and Theorem \ref{teo1}, to reconstruct $f$, we do not need $(Q_+f)(x,\xi)$ for {\it all} $|x|<1$. It suffices to take either $|x|<1/2$ (i.e., $\rho =1-|x|>1/2$) or $1/2 <|x|<1$ (i.e., $\rho =1-|x|<1/2$). In the second case, each tangent sphere does not meet some neighborhood of the origin, while in the first case, each tangent sphere contains the origin strictly inside.
 Moreover, the one-sided structure of fractional integrals immediately implies the following `support theorem'.

 \begin{theorem}\label{teos} Let $f(y)=f_0 (|y|)$ obey the condition (\ref{foxb}). If  $0<\del <1$ and  $(Q_+f)(x,\xi)= 0$ for all  $0<|x|<(1-\del)/2$ (or for all $(1 +\del)/2 <|x|<1$. Then $f(y)=0$ for all $\del <|y|<1$.
\end{theorem}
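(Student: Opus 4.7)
The plan is to reduce the support theorem to a one-sided fractional integral equation via Lemma~\ref{lem1} and then exploit the ``locality from one side'' of the Riemann--Liouville derivative $\Cal D^{(k-1)/2}_{1-}$. First, I would translate the hypothesis into information about the radial profile $\Phi_+$. If $0<|x|<(1-\del)/2$ then $|x|<1/2$, so $t=2||x|-1/2|=1-2|x|$ runs through $(\del,1)$; if $(1+\del)/2<|x|<1$ then $|x|>1/2$, so $t=2|x|-1$ again runs through $(\del,1)$. In either case the hypothesis $(Q_+f)(x,\xi)=0$ together with Lemma~\ref{lem1} gives $\Phi_+(t)=0$ for $t\in(\del,1)$, and since the prefactor $((1-t^2)/2)^{2-k}$ is non-zero on $(\del,1)$, we obtain $F_+(t)=0$ on $(\del,1)$.

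Next, I would carry out the same substitution used in the proof of Theorem~\ref{teo1}: set $u=s^2$, $v=t^2$, $h(u)=f_1(\sqrt u)$, $H(v)=F_+(\sqrt v)$, so that the identity (\ref{forbqz}) becomes the Riemann--Liouville equation
\be\label{support-eq}
H=I^{(k-1)/2}_{1-}h\qquad\text{on }(0,1),
\ee
with $H(v)=0$ for $v\in(\del^2,1)$. The integrability hypothesis (\ref{foxb}) guarantees $h\in L^1(0,1)$, so Lemma~\ref{l32} applies and gives $h=\Cal D^{(k-1)/2}_{1-}H$ a.e.\ on $(0,1)$.

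The only substantive point, which I expect to be the one worth spelling out, is that vanishing of $H$ merely on the subinterval $(\del^2,1)$ already forces vanishing of $h$ on the same subinterval. This is where the one-sided structure of $I^{(k-1)/2}_{1-}$ is essential: by the definitions (\ref{rlfir}) and (\ref{frd-}), the value $(\Cal D^{(k-1)/2}_{1-}H)(v)$ depends only on $H$ restricted to $[v,1]$. Since $H\equiv 0$ on $(\del^2,1)$, the integral $\int_v^1 H(y)(y-v)^{-\alpha_0}\,dy$ vanishes identically for $v\in(\del^2,1)$, and hence so do all its one-sided derivatives. Therefore $(\Cal D^{(k-1)/2}_{1-}H)(v)=0$ for a.e.\ $v\in(\del^2,1)$, and consequently $h(v)=0$ a.e.\ on $(\del^2,1)$.

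Finally I would unwind the change of variables. Since $h(u)=f_0(\sqrt u)(1-u)^{(k-3)/2}$ and $1-u\neq 0$ on $(\del^2,1)$, we conclude $f_0(s)=0$ for a.e.\ $s\in(\del,1)$, which is the desired conclusion $f(y)=0$ for a.e.\ $\del<|y|<1$. The argument is symmetric in the two alternative hypotheses, since both lead to the same range $t\in(\del,1)$; this also explains the doubled-data observation of Remark~\ref{double}.
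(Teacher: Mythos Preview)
Your proof is correct and follows exactly the approach the paper indicates: the paper simply states that ``the one-sided structure of fractional integrals immediately implies'' the support theorem, and you have supplied precisely those details via Lemma~\ref{lem1}, the substitution of Theorem~\ref{teo1}, and the locality of $\Cal D^{(k-1)/2}_{1-}$. One small correction: condition~(\ref{foxb}) only yields $h\in L^1(\e,1)$ for every $\e\in(0,1)$, not $h\in L^1(0,1)$; however, since both $I^{(k-1)/2}_{1-}$ and $\Cal D^{(k-1)/2}_{1-}$ at a point $v$ depend only on data in $[v,1]$, it suffices to apply Lemma~\ref{l32} on the interval $[\del^2,1]$, where the required integrability holds.
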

\end{remark}

We conjecture that a similar theorem is true for arbitrary (not necessarily radial) functions.

The next lemma resembles duality relations for Radon transforms.

\begin{lemma} \label{lem2} Given a function $u_0 (t)$, $t\in (0,1)$, let
\bea u(s)&=&s^{1-k} u_0 (|2s-1|), \qquad 0<s<1, \nonumber\\
\label{vsp} v(s)&=& \varkappa (s)\intl_0^{s} (s^2 - t^2)^{(k-3)/2}\,(1 - t^2)^{2-k} u_0 (t) \,dt,\eea
where
\be\label{vsmx} \varkappa (s)=\frac{2^{k-1}\Gam (k/2)\,\sig_{k-1}}{\pi^{1/2}\, \Gam ((k-1)/2)\,\sig_{n-1}}\frac{(1-s^2)^{(k-3)/2}}{s^{n-2}}.\ee
Then
\be\label {rel1}
 \intl_{\Sig^+_{n,k}} (Q_+f)(x,\xi)\, u(|x|)\, dx d_*\xi=\intl_{\bbb^n_+} f(y)\,  v(|y|)\, dy,\ee
 provided that  either side of this equality exists in the Lebesgue sense.
\end{lemma}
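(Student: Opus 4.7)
My plan is to exploit the $O(n)$-equivariance of $Q_+$ to reduce the claim to the case of radial $f$, and then match the two sides as explicit one-dimensional integrals via Lemma \ref{lem1} and Fubini.

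\emph{Reduction to radial $f$.} For any $\gam\in O(n)$, a change of variable $\theta\mapsto\gam^{-1}\theta$ in the defining integral (\ref{qplusg}) yields the equivariance $(Q_+f)(\gam x,\gam\xi)=(Q_+(f\circ\gam))(x,\xi)$. Since the measure $dx\,d_*\xi$ on $\Sig^+_{n,k}$, the weight $u(|x|)$, and the radial factor $v(|y|)\,dy$ on $\bbb^n_+$ are all $O(n)$-invariant, averaging $f$ over $O(n)$ leaves both sides of the claimed identity unchanged. Thus it suffices to treat $f(y)=f_0(|y|)$.

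\emph{Passing to one-dimensional integrals.} Under this assumption Lemma \ref{lem1} gives $(Q_+f)(x,\xi)=\Phi_+(|2|x|-1|)$ independent of $\xi$, so integrating against the probability measure $d_*\xi$ and then in polar coordinates reduces the left-hand side to
\[
\text{LHS}=\sig_{n-1}\int_0^1 \Phi_+(|2r-1|)\,u(r)\,r^{n-1}\,dr.
\]
For the right-hand side, polar coordinates give $\sig_{n-1}\int_0^1 f_0(s)\,v(s)\,s^{n-1}\,ds$; substituting (\ref{vsp}) and swapping the order of integration via Fubini on the triangle $\{0<t<s<1\}$ yields
\[
\text{RHS}=\sig_{n-1}\int_0^1 u_0(t)\,(1-t^2)^{2-k}\!\!\int_t^1 f_0(s)\,s^{n-1}\varkappa(s)\,(s^2-t^2)^{(k-3)/2}\,ds\,dt,
\]
where the inner integral is, by the definition of $F_+$ and $f_1(s)=f_0(s)(1-s^2)^{(k-3)/2}$, a constant multiple of $F_+(t)$ once the $s$-dependence of $\varkappa(s)$ is factored out.

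\emph{Matching weights.} To bring the LHS into the same one-dimensional form, I would plug in the explicit expression of $\Phi_+$ from Lemma \ref{lem1} and $u(r)=r^{1-k}u_0(|2r-1|)$, and perform the two-to-one substitution $t=|2r-1|$, splitting at $r=1/2$ and using $r=(1\mp t)/2$ on the two halves. The LHS then becomes a single integral of the form $\int_0^1 u_0(t)F_+(t)\cdot[\text{weight}]\,dt$, and the lemma follows by identifying this weight with the one produced on the RHS by $\varkappa(s)$. The Lebesgue-integrability hypothesis justifies every application of Fubini. The main technical obstacle is the bookkeeping of the $\Gamma$- and $\sig$-constants appearing through the ratio $\sig_{k-1}/\sig_{n-1}$ and the factor $(1-s^2)^{(k-3)/2}/s^{n-2}$ in $\varkappa(s)$, in combination with the additional Jacobian coming from the two-to-one substitution $t=|2r-1|$ on the LHS.
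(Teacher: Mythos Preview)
Your strategy—reduce to radial $f$ via $O(n)$-averaging, invoke Lemma~\ref{lem1}, then match one-dimensional integrals by Fubini—is exactly the paper's. But one concrete step is wrong and it is \emph{not} mere bookkeeping: your polar-coordinate formula for the left-hand side uses the $n$-dimensional volume element on $\bbb^n_+$, whereas the integral over $\Sig^+_{n,k}$ in this lemma is taken with the $k$-dimensional Lebesgue measure along each plane $\xi$ (equivalently, $\int_{G_{n,k}}d_*\xi\int_{\xi\cap\bbb^n_+}\cdots\,dx$, as the paper's first displayed line $\int_{O(n)}d\gam\int_{\bbb^n_+\cap\gam\bbr^k}\cdots\,dx$ makes explicit).

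With your reading, after inserting $u(r)=r^{1-k}u_0(|2r-1|)$ you get
\[
\text{LHS}=\sig_{n-1}\int_0^1 r^{\,n-k}\,u_0(|2r-1|)\,\Phi_+(|2r-1|)\,dr,
\]
and the factor $r^{n-k}$ is \emph{not} symmetric under $r\mapsto 1-r$. The two-to-one substitution $t=|2r-1|$ then produces a $t$-dependent weight
\[
\tfrac{\sig_{n-1}}{2}\Big[\big(\tfrac{1-t}{2}\big)^{n-k}+\big(\tfrac{1+t}{2}\big)^{n-k}\Big],
\]
which cannot match the constant weight $\sig_{k-1}$ that the right-hand side yields after your (correct) Fubini manipulation. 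So for $k<n$ the argument does not close; this is a structural obstruction, not a constant to be chased.

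Under the intended $k$-dimensional integration the left-hand side is instead
\[
\sig_{k-1}\int_0^1 \Phi_+(|2r-1|)\,u(r)\,r^{k-1}\,dr=\sig_{k-1}\int_0^1 u_0(|2r-1|)\,\Phi_+(|2r-1|)\,dr=\sig_{k-1}\int_0^1 u_0(t)\,\Phi_+(t)\,dt,
\]
now cleanly, because $r^{k-1}u(r)=u_0(|2r-1|)$ is symmetric about $r=1/2$. This matches the RHS, where your computation correctly gives $\sig_{k-1}\int_0^1 u_0(t)\,\Phi_+(t)\,dt$ once $s^{n-1}\varkappa(s)=\text{const}\cdot s\,(1-s^2)^{(k-3)/2}$ is inserted. (The presence of $\sig_{k-1}/\sig_{n-1}$ in $\varkappa(s)$ is the tell-tale sign that this is the measure meant.) With that single correction your proof goes through and coincides with the paper's.
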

\begin{proof}
 By the $O(n)$-invariance,
\bea
l.h.s.&=&\intl_{O(n)} d\gam \intl_{\bbb^n_+ \,\cap  \gam \bbr^k} (Q_+f)(x, \gam \bbr^k)\, u(|x|)\, dx\nonumber\\
&=& \intl_{|z|<1,\,  z \in  \bbr^k} (Q_+ \tilde f)(z, \bbr^k)\, u(|z|)\, dz, \nonumber\eea
where
$ \tilde f (y)= \int_{O(n)} f(\gam y) d\gam$ is a radial function. We set $ \tilde f (y)=f_0 (|y|)$, $f_1 (s)= f_0 (s)\, (1-s^2)^{(k-3)/2}$.
 Then, by Lemma \ref{lem1},
 \bea &&l.h.s.=
\frac{2^{k-1}\Gam (k/2)\,\sig_{k-1}}{\pi^{1/2}\, \Gam ((k-1)/2)}\intl_{0}^1 (1-t^2)^{2-k}  u_0(t)\, dt\intl_{t}^1 (s^2 - t^2)^{(k-3)/2 }f_1 (s)  \, s\, ds\nonumber\\
&&=\frac{2^{k-1}\Gam (k/2)\,\sig_{k-1}}{\pi^{1/2}\, \Gam ((k-1)/2)} \intl_{0}^1 f_0 (s)\, (1-s^2)^{(k-3)/2} s ds \nonumber\\
&& \times
\intl_0^s (s^2 - t^2)^{(k-3)/2}(1-t^2)^{2-k}  u_0(t)\, dt\nonumber\\
&&=\frac{2^{k-1}\Gam (k/2)\,\sig_{k-1}}{\pi^{1/2}\, \Gam ((k-1)/2)\,\sig_{n-1}}\intl_{|y|<1}  f_0(|y|) (1 - |y|^2)^{(k-3)/2 }\frac{dy}{|y|^{n-2}} \nonumber\\
&& \times \intl_0^{|y|} (|y|^2 - t^2)^{(k-3)/2}(1-t^2)^{2-k}  u_0(t)\, dt\nonumber\\
&&=\intl_{|y|<1}  v(|y|)\, dy \intl_{O(n)} f(\gam y) d\gam= \intl_{|y|<1} f(y)\,  v(|y|)\, dy,\nonumber\eea
 where the function $v$   is defined by (\ref{vsp}).
\end{proof}

\begin{example} Choose $u_0 (t)=t^\a (1-t^2)^{k-2}$, $Re \,\a >-1$, in Lemma \ref{lem2}. Then
\bea u(s)&=&s^{1-k} |2s-1|^\a (1-|2s-1|^2)^{k-2}\nonumber\\
&=& 2^{\a+2k-4} s^{-1} |s-1/2|^\a (1-s)^{k-2};\nonumber\eea
\bea v(s)&=& \frac{2^{k-1}\Gam (k/2)\,\sig_{k-1}}{\pi^{1/2}\, \Gam ((k-1)/2)\,\sig_{n-1}}\frac{(1-s^2)^{(k-3)/2}}{s^{n-2}}
\intl_0^{s} (s^2 - t^2)^{(k-3)/2}\,t^\a  \,dt\nonumber\\
&=&c \, (1-s^2)^{(k-3)/2} s^{\a+k-n}, \qquad c= \frac{2^{k-2}\Gam (k/2)\, \Gam ((\a +1)/2)\,\sig_{k-1}}{\pi^{1/2}\, \Gam ((k+\a)/2)\,\sig_{n-1}}.\nonumber\eea
This gives
\bea
 &&\intl_{\Sig^+_{n,k}} (Q_+f)(x,\xi)\,||x|-1/2|^\a (1-|x|)^{k-2}\, \frac{dx d_*\xi}{|x|}\nonumber\\
\label {conn} &&=\tilde c \intl_{\bbb^n_+} f(y)\, |y|^{\a+k-n}\,(1 - |y|^2)^{(k-3)/2 }\, dy,\eea
\[
\tilde c=\frac{\Gam (k/2)\, \Gam ((\a +1)/2)\,\sig_{k-1}}{2^{\a +k-2}\pi^{1/2}\, \Gam ((k+\a)/2)\,\sig_{n-1}}.\]
\end{example}

\begin{remark} The formula (\ref{conn}) reveals precise connection between the sets of singularities
\bea\label {setf}  S(f)&=&\{y\in \bbb: y=0,  |y|=1\},\\
\label {setQpf} S(Q_+f)&=&\{x\in \bbb \, \cap \xi: x=0, |x|=1/2, |x|=1\} \eea
 of the function $f$ and its `interior' spherical mean $Q_+f$. Moreover, examination of (\ref{conn})  brings light to the  existence of $Q_+f$ for $f\in L^p (\bbb^n_+)$. Specifically,
by H\"older's inequality,  the  right-hand side of (\ref{conn}) is dominated by  $A ||f||p$, where
\[
A^{p'} = const \intl_0^1 r^{n-1+(\a+k-n)p'}\, (1-r)^{(k-3)p'/2}dr.\]
It follows that $(Q_+f)(x, \xi)$ is finite a.e. provided that $p>2/(k-1)$. The latter holds for all $1\le p\le \infty$  if $k\ge 3$ and $p>2$ if $k=2$ (the case of one-dimensional tangent circles). The condition $p>2$ is sharp. Take, for example,
\[
 f(y)= \frac{(1 - |y|^2)^{-1/p}}{(1-\log (1 - |y|^2))^{(1+\e)/p}}, \qquad \e>0.\]
This function belongs to  $L^p (\bbb^n_+)$ but $(Q_+f)(x, \xi)\equiv \infty$ if $\e$ is small enough.  Regarding the parameter $\a$ in  (\ref{conn}), we need to assume
\[ \a >\max \left (\frac{n}{p} -2, -1\right).\]
This inequality is sharp  and applicable to  all $1\le p\le \infty$,  $2\le k\le n$.
 \end{remark}

\subsection {The Exterior Tangency}

The next statement is an analogue of Lemma  \ref{lem1} for spherical means (\ref{qminus}) over spheres which are tangent to the boundary  $|x|=1$  from outside. Every such sphere is indexed by its center $x$, $|x|>1$.
In this case, the right-sided  fractional integrals  in (\ref{forbq}) are replaced by the corresponding left-sided integrals. Although the calculations below mimic those in the previous section, we present them in detail to show an essential difference between the interior tangency and the exterior one.

\begin{lemma} \label{lem1m} Let $f(y)=f_0 (|y|)$, $y\in \bbb^n_-$, $f_1 (s)=f_0(s) (s^2 \!-\!1)^{(k-3)/2}$, $2\le k\le n$. Then  for  all $|x|> 1$ and $\xi \in \gnk^x$,
\be\label{forq}
(Q_-f)(x,\xi)=\Phi_- (t), \qquad  \Phi_- (t) =
\frac{\Gam (k/2)}{\pi^{1/2}} \left (\frac{t^2 \!-\!1}{2}\right)^{2-k}\!F_{-}(t),\ee
where $t=2|x|-1>1$,
\be\label{forbqz1} F_{-}(t)=\frac{2}{ \Gam ((k-1)/2)}\intl_1^t ( t^2 -s^2)^{(k-3)/2}f_1 (s)  \, s\, ds.\ee
%\[ F_{-}(t)=(I_{1+, 2}^{(k-1)/2} f_1)(t)\equiv\frac{2}{ \Gam ((k-1)/2)}\intl_1^t ( t^2 -s^2)^{(k-3)/2}f_1 (s)  \, s\, ds\]
 It is assumed
that the integral  (\ref{forbqz1})  exists in the Lebesgue sense.
\end{lemma}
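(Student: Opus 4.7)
The plan is to mirror the proof of Lemma \ref{lem1} step by step, adapted to the exterior geometry, so as to exhibit where the sign conventions diverge from the interior case. First I would set $\rho=|x|-1>0$ (the radius of the tangent sphere centered at $x$). Writing $x=|x|\omega$ with $\omega\in\bbs^{n-1}$ and $\tau=\omega\cdot\theta$, the integrand in (\ref{qminus}) becomes $f_0\bigl(\sqrt{(1+\rho)^2+2\rho(1+\rho)\tau+\rho^2}\bigr)$. The standard zonal reduction \cite[f. (1.12.13)]{Ru15} of the surface integral over the $(k-1)$-dimensional sphere $\bbs^{n-1}\cap\xi$ then yields
\[
(Q_-f)(x,\xi)=\frac{\sig_{k-2}}{\sig_{k-1}}\intl_{-1}^1 f_0\bigl(\sqrt{(1+\rho)^2+2\rho(1+\rho)\tau+\rho^2}\bigr)(1-\tau^2)^{(k-3)/2}d\tau.
\]

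Next I would perform the change of variable $s^2=(1+\rho)^2+2\rho(1+\rho)\tau+\rho^2$. The endpoints $\tau=\mp 1$ correspond respectively to $s=1$ and $s=1+2\rho$, i.e., to the two intersections of the tangent sphere with the ray from the origin through $x$. Thus the $s$-variable sweeps $[1,1+2\rho]$ in the increasing direction. An elementary computation gives the Jacobian $d\tau = s\,ds/(\rho(1+\rho))$ and the factorization
\[
1-\tau^2 = \frac{(s^2-1)\bigl((1+2\rho)^2-s^2\bigr)}{\bigl(2\rho(1+\rho)\bigr)^2},
\]
so the integrand splits into the factor $(s^2-1)^{(k-3)/2}$, which is absorbed into $f_1(s)=f_0(s)(s^2-1)^{(k-3)/2}$, and the factor $\bigl((1+2\rho)^2-s^2\bigr)^{(k-3)/2}$, which becomes the Riemann--Liouville kernel.

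Finally, substituting $t=1+2\rho=2|x|-1$ (so that $\rho(1+\rho)=(t^2-1)/4$) and using $\sig_{k-2}/\sig_{k-1}=\Gam(k/2)/(\pi^{1/2}\Gam((k-1)/2))$ rearranges all prefactors into precisely the form (\ref{forq}) with $F_-$ as in (\ref{forbqz1}). I anticipate no substantive obstacle: the argument is one-variable bookkeeping. The point that the author wishes to emphasize is precisely what this calculation exhibits. In the interior case the radial variable runs from $s=t$ down to $s=1$ and yields the right-sided operator $I_{1-}^{(k-1)/2}$, whereas here it runs from $s=1$ up to $s=t$ and yields the left-sided operator $I_{1+}^{(k-1)/2}$. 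A secondary difference, worth noting in passing, is that no absolute value appears in the parametrization $t=2|x|-1$, reflecting the absence here of the symmetry sphere $|x|=1/2$ discussed in Remark \ref{rem 1.1}.
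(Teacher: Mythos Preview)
Your proposal is correct and follows essentially the same route as the paper's own proof: set $\rho=|x|-1$, reduce the spherical integral to a one-dimensional integral in $\tau=\omega\cdot\theta$ via the zonal formula, change variables to $s^2=(1+\rho)^2+2\rho(1+\rho)\tau+\rho^2$, factor $1-\tau^2$, and then substitute $t=1+2\rho$. The only differences are cosmetic (you use $\tau$ for the integration variable where the paper reuses the letter $t$, and you append the explanatory remarks about $I_{1+}^{(k-1)/2}$ versus $I_{1-}^{(k-1)/2}$ and the absence of the $|x|=1/2$ symmetry, which the paper places outside the proof).
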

\begin{proof} Let $\rho= |x|-1$. Then
\bea (Q_-f)(x,\xi)&=&\frac{1}{\sig_{k-1}}\intl_{\bbs^{k-1}} f_0(|x+\rho\,\theta |)\, d\theta\nonumber\\
&=&\frac{1}{\sig_{k-1}}\intl_{\bbs^{k-1}} f_0 (\sqrt {|x|^2 +2\rho x\cdot\theta +\rho^2})\, d\theta\nonumber\\
&=&\frac{\sig_{k-2}}{\sig_{k-1}}\intl_{-1}^1 f_0 (\sqrt {(1+\rho)^2 +2\rho(1+\rho) t +\rho^2})\, (1-t^2)^{(k-3)/2}\, dt.\nonumber\eea
 Changing variables $(1+\rho)^2 +2\rho(1+\rho) t +\rho^2=s^2$, so that
\[
t=\frac{s^2 -\rho^2 -(1+\rho)^2}{2\rho(1+\rho)}, \quad 1-t^2=\frac{(s^2 -1)((1+2\rho)^2 - s^2)}{(2\rho(1+\rho))^2},\]
we obtain
\[(Q_-f)(x,\xi)=\frac{\sig_{k-2}}{\sig_{k-1}} \intl_1^{1+2\rho} f_0 (s)\, \frac{(s^2 -1)^{(k-3)/2}((1+2\rho)^2 - s^2)^{(k-3)/2}}{(2\rho(1+\rho))^{k-3}}\, \frac{s\, ds}{\rho(1+\rho)}.\]
Setting $t=1+2\rho =2|x|-1$, we obtain  (\ref{forq}).
\end{proof}

 \begin{example}\label {example 2} (cf. Example \ref{example 1})  Let $f(x)= |x|^{1-k-2\b} (|x|^2 -1)^{\b +(1-k)/2}$, $\b >0$. Then, as (\ref{tly}),
  \be\label {tly}
(Q_-f)(x,\xi)=c\,  \frac{(|x|(|x|-1))^{\b +(1-k)/2}}{(|x|-1/2)^{2\b}}\ee
with the same constant $c$. However, now the sphere $|x|=1/2$, does not cause any problem because $|x|>1$.
\end{example}

Lemma \ref{lem1m},  together with the first formula in (\ref{obr}), yield the following inversion result.

\begin{theorem}\label{teo1m} Let $2\le k\le n$, $f(y)=f_0 (|y|)$, $|y|>1$, so that $(Q_-f)(x,\xi)= \Phi_- (t)$, $t= 2|x| -1>1$. Suppose that
 \be\label{foxbr} \intl_1^A |f_0 (s)| \, (s^2 -1)^{(k-3)/2} s \,ds <\infty\quad \text{for every} \quad A>1 \ee
and let
\[H(v)= \frac{\pi^{1/2}}{\Gam (k/2)}\,\left (\frac{v\!-\!1}{2}\right)^{k-2}\!\Phi_- (\sqrt {v}).\]
 Then $f_0 (s)$ can be reconstructed for almost all  $s>1$ by the formula
  \[
f_0 (s)=(s^2 -1)^{(3-k)/2} (\Cal D^{(k-1)/2}_{1 +} H)(s^2), \]
where the fractional derivative $\Cal D^{(k-1)/2}_{1 +} H$ is defined by (\ref{frd+}).
\end{theorem}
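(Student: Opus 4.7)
The plan is to mirror the argument for Theorem \ref{teo1}, with the single essential difference that Lemma \ref{lem1m} now produces a \emph{left}-sided Riemann--Liouville fractional integral. Indeed, in (\ref{forbqz1}) the inner variable $s$ ranges from $1$ up to $t$, so the natural operator after a change of variables will be $I^{(k-1)/2}_{1+}$, and I would invert it using the \emph{first} identity in Lemma \ref{l32} (rather than the second, as in the interior case).

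First, I would pass to squared variables $u=s^2$, $v=t^2$ and set
\[ h(u)=f_1(\sqrt{u})=f_0(\sqrt{u})\,(u-1)^{(k-3)/2},\qquad H(v)=F_-(\sqrt{v}).\]
Using $s\,ds=du/2$, the identity (\ref{forbqz1}) becomes
\[ H(v)=\frac{1}{\Gam((k-1)/2)}\intl_1^v (v-u)^{(k-3)/2}\,h(u)\,du = (I^{(k-1)/2}_{1+} h)(v),\]
which is precisely (\ref{rlfil}) with $a=1$ and $\alpha=(k-1)/2$. A brief check then shows that the definition of $H$ in the statement of the theorem agrees with $F_-(\sqrt{v})$: solving the formula $\Phi_-(t)=\frac{\Gam(k/2)}{\pi^{1/2}}\left(\frac{t^2-1}{2}\right)^{2-k}F_-(t)$ from Lemma \ref{lem1m} for $F_-(t)$ and setting $t=\sqrt{v}$ reproduces exactly the $H(v)$ displayed in the theorem.

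Second, I would verify that the hypothesis (\ref{foxbr}) gives exactly the integrability needed to apply Lemma \ref{l32}. The substitution $u=s^2$ yields
\[ \intl_1^{A^2}|h(u)|\,du=2\intl_1^A|f_0(s)|\,(s^2-1)^{(k-3)/2}\,s\,ds<\infty\]
for every $A>1$, so $h\in L^1(1,A^2)$. The first identity in (\ref{obr}) then produces $h=\Cal D^{(k-1)/2}_{1+}H$ almost everywhere. Inverting the relation $h(u)=f_0(\sqrt{u})(u-1)^{(k-3)/2}$ and returning to the variable $s=\sqrt{u}$ delivers the claimed formula $f_0(s)=(s^2-1)^{(3-k)/2}(\Cal D^{(k-1)/2}_{1+}H)(s^2)$.

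I do not expect any serious obstacle here: once Lemma \ref{lem1m} is in hand, everything is bookkeeping. The one conceptual point worth highlighting, and the sole distinction from Theorem \ref{teo1}, is the orientation of the fractional integral: in the interior case the tangent spheres nest inward from $|x|=1$ and produce the kernel $(1-s^2)$ together with $I^{(k-1)/2}_{1-}$, whereas in the exterior case they nest outward and the kernel becomes $(s^2-1)$ with $I^{(k-1)/2}_{1+}$. Making sure this orientation, and the corresponding use of $\Cal D^{(k-1)/2}_{1+}$ in place of $\Cal D^{(k-1)/2}_{1-}$, is the only point requiring attention.
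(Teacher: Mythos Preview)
Your proposal is correct and follows exactly the route the paper takes: the paper simply remarks that the proof is ``similar to that of Theorem~\ref{teo1}'', and what you have written is precisely that adaptation, with the substitution $u=s^2$, $v=t^2$ converting (\ref{forbqz1}) into $H=I^{(k-1)/2}_{1+}h$ and the first identity in (\ref{obr}) supplying the inversion. Your checks that the statement's $H$ coincides with $F_-(\sqrt{v})$ and that (\ref{foxbr}) yields the local $L^1$ condition needed for Lemma~\ref{l32} are accurate.
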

The proof of this statement is similar to that of Theorem \ref{teo1}.

 Moreover, the one-sided structure of fractional integrals immediately implies the following support theorem, which  mimics Theorem \ref{teos}.
 \begin{theorem}\label{teose} Let $f(y)=f_0 (|y|)$ obey the condition (\ref{foxbr}). If  $\del >1$ and  $(Q_+f)(x,\xi)= 0$ for all  $1<|x|<(1+\del)/2$, then $f(y)=0$ for all $1 <|y|<\del$.
\end{theorem}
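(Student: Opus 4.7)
The statement as written invokes $(Q_+f)(x,\xi)$ on the region $1<|x|<(1+\del)/2$, but $Q_+$ is defined in (\ref{qplusg}) only for $|x|<1$; since the surrounding subsection is devoted to exterior tangency and the integrability hypothesis (\ref{foxbr}) is the one paired with $Q_-$, I read the hypothesis as $(Q_-f)(x,\xi)=0$ on $1<|x|<(1+\del)/2$. The strategy is then the exterior analogue of the proof of Theorem \ref{teos}: I will use Lemma \ref{lem1m} to convert the vanishing of the spherical mean on an annulus into the vanishing of a one-dimensional left-sided Riemann--Liouville fractional integral on an interval, and then invert using Lemma \ref{l32}.

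Concretely, under the change of variable $t=2|x|-1$, the annulus $|x|\in(1,(1+\del)/2)$ maps bijectively onto $t\in (1,\del)$, and Lemma \ref{lem1m} yields $\Phi_-(t)=0$ there. The prefactor $(t^2-1)^{2-k}$ in (\ref{forq}) is finite and nonzero on $(1,\del)$, so
\[F_-(t)=\frac{2}{\Gam((k-1)/2)}\intl_1^t (t^2-s^2)^{(k-3)/2}f_1(s)\,s\,ds=0,\qquad t\in(1,\del).\]
Substituting $u=s^2$, $v=t^2$ and setting $h(u):=f_1(\sqrt u)$, this reads $(I^{(k-1)/2}_{1+}h)(v)=0$ for almost every $v\in(1,\del^2)$.

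By the hypothesis (\ref{foxbr}), the change of variable gives $h\in L^1(1,A)$ for every $A>1$. Hence Lemma \ref{l32} applies on the interval $(1,\del^2)$: acting with $\Cal D^{(k-1)/2}_{1+}$ on both sides of $I^{(k-1)/2}_{1+}h=0$ produces $h(u)=0$ for a.e.\ $u\in (1,\del^2)$. Undoing the substitutions, $f_0(s)(s^2-1)^{(k-3)/2}=0$ for a.e.\ $s\in (1,\del)$; since the weight is strictly positive on that interval, $f_0(s)=0$ a.e., and therefore $f(y)=f_0(|y|)=0$ for almost every $y$ with $1<|y|<\del$, which is the desired conclusion.

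The only delicate point is verifying that (\ref{foxbr}) is precisely strong enough to justify the appeal to Lemma \ref{l32}. This is clearest in the borderline case $k=2$, where $(s^2-1)^{(k-3)/2}$ is singular at the inner boundary $s=1$ and (\ref{foxbr}) is exactly the condition ensuring that $h\in L^1_{loc}(1,\infty)$ so that $I^{1/2}_{1+}h$ exists in the Lebesgue sense. For $k\ge 3$ the weight is bounded and the verification is trivial. No further obstacles arise; the proof mirrors that of Theorem \ref{teos}, with right-sided fractional operators uniformly replaced by their left-sided counterparts.
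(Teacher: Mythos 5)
Your proof is correct and is exactly the argument the paper intends: the paper gives no explicit proof of Theorem \ref{teose}, stating only that it follows ``immediately'' from the one-sided structure of the left-sided fractional integral in Lemma \ref{lem1m}, and your reduction via $u=s^2$, $v=t^2$ to $I^{(k-1)/2}_{1+}h=0$ followed by Lemma \ref{l32} is precisely that argument. You also correctly identify the typo in the statement ($Q_+$ should read $Q_-$, as $Q_+$ is undefined for $|x|>1$ and the hypothesis (\ref{foxbr}) is the exterior one) and rightly note that (\ref{foxbr}) transforms exactly into the $L^1$ hypothesis needed for Lemma \ref{l32}.
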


We conjecture that a similar theorem is true for arbitrary (not necessarily radial) functions.

The next lemma mimics Lemma \ref{lem2} with minor changes.

\begin{lemma} \label{lem2d} Given a function $u_0 (t)$, $t>1$, let
\bea u(s)&=&s^{1-k} u_0 (2s-1), \qquad s>1, \nonumber\\
\label{vsm} v(s)&=& \varkappa (s)\intl_s^\infty (t^2 -s^2)^{(k-3)/2}\,( t^2 -1)^{2-k} u_0 (t) \,dt,\eea
where
\be\label{vsmx1} \varkappa (s)=\frac{2^{k-2}\Gam (k/2)\, \sig_{k-1}}{\pi^{1/2}\, \Gam ((k-1)/2)\, \sig_{n-1}}\frac{(s^2 -1)^{(k-3)/2}}{s^{n-2}}.\ee
Then
\be\label {rel1}
  \intl_{\Sig^-_{n,k}}   (Q_-f)(x, \xi)\, u(|x|)\, dx d_*\xi=\intl_{\bbb^n_-} f(y)\,  v(|y|)\, dy,\ee
 provided that  either side of this equality exists in the Lebesgue sense.
\end{lemma}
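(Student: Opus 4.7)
The plan is to mirror the proof of Lemma \ref{lem2}, substituting $Q_-$ for $Q_+$ and Lemma \ref{lem1m} for Lemma \ref{lem1}, so the only real work lies in tracking the change of variables and constants specific to the exterior tangency. First, I would invoke the $O(n)$-equivariance of $Q_-$ and the product structure of $dxd_*\xi$ on $\Sigma^-_{n,k}$ to reduce the left-hand side of (\ref{rel1}) to a single integral over $\{z\in\bbr^k : |z|>1\}$ with $f$ replaced by its radialization $\tilde f(y)=\int_{O(n)} f(\gamma y)\,d\gamma =: f_0(|y|)$. Setting $f_1(s)=f_0(s)(s^2-1)^{(k-3)/2}$, Lemma \ref{lem1m} then represents $(Q_-\tilde f)(z,\bbr^k)$ as $\Phi_-(2|z|-1)$, given explicitly by a one-dimensional left-sided fractional integral.

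Next, passing to polar coordinates on $\bbr^k$, the $r^{k-1}$ Jacobian cancels the $r^{1-k}$ factor in $u(r)=r^{1-k}u_0(2r-1)$, and the substitution $t=2r-1$, $dr=dt/2$, converts the integral into an iterated integral over the region $\{(s,t): 1<s<t<\infty\}$ carrying the factor $2^{k-3}\Gamma(k/2)\sigma_{k-1}/\pi^{1/2}$ (the $2^{k-2}$ coming from $\bigl((t^2-1)/2\bigr)^{2-k}$, combined with $1/2$ from the Jacobian). Fubini's theorem, justified by the assumed Lebesgue integrability of one side of (\ref{rel1}), then swaps the order of integration to place the outer integration in $s$ and the inner in $t$, producing the kernel $(t^2-s^2)^{(k-3)/2}(t^2-1)^{2-k}u_0(t)$ that appears in (\ref{vsm}).

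Finally, I would reinstate $f_1(s)=f_0(s)(s^2-1)^{(k-3)/2}$ and rewrite the resulting $s$-integral as an $n$-dimensional radial integral over $\bbb^n_-$ via $\int_{\bbb^n_-}g(|y|)\,dy=\sigma_{n-1}\int_1^\infty g(r)r^{n-1}\,dr$. This produces the $(s^2-1)^{(k-3)/2}/s^{n-2}$ density in $\varkappa$ from (\ref{vsmx1}), and a last appeal to the rotational invariance of the weight $v(|y|)$ replaces $\tilde f$ by $f$. The main (non-conceptual) obstacle is constant bookkeeping: unlike in Lemma \ref{lem2}, where the map $r\mapsto|2r-1|$ on $(0,1)$ is two-to-one and thus absorbs the factor $1/2$ from the Jacobian to yield $2^{k-1}$, here $r\mapsto 2r-1$ is one-to-one on $(1,\infty)$, leaving $2^{k-2}$ in (\ref{vsmx1}) and hence accounting for the sole arithmetic discrepancy between the two lemmas.
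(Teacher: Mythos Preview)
Your proposal is correct and follows essentially the same route as the paper's proof: radialize $f$ via $O(n)$-averaging, apply Lemma~\ref{lem1m}, pass to polar coordinates in $\bbr^k$ with the substitution $t=2r-1$, swap the order of integration, and then unwind the radial integral back to $\bbb^n_-$. The only slip is in your intermediate bookkeeping remark---the factor $2^{k-3}\Gamma(k/2)\sigma_{k-1}/\pi^{1/2}$ omits the extra factor $2/\Gamma((k-1)/2)$ built into $F_-$ in (\ref{forbqz1})---but your final constant $2^{k-2}$ in $\varkappa(s)$ and your explanation of why it differs from the $2^{k-1}$ of Lemma~\ref{lem2} are both correct (and the paper itself makes exactly this observation in the remark following the lemma).
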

\begin{proof}
 As in the proof of Lemma \ref{lem2}, for  $ \tilde f (y)= \int_{O(n)} f(\gam y) d\gam=f_0 (|y|)$, owing to Lemma \ref{lem1m}, we have
\bea &&l.h.s.=\intl_{|z|>1,\,  z \in  \bbr^k} (Q_- \tilde f)(z, \bbr^k)\, u(|z|)\, dz\nonumber\\
&&=\frac{2^{k-2}\Gam (k/2)\sig_{k-1}}{\pi^{1/2}\, \Gam ((k-1)/2)}\intl_1^\infty (t^2 -1)^{2-k}  u_0(t)\, dt\intl_1^{t} (t^2 - s^2)^{(k-3)/2 }f_1 (s)  \, s\, ds,\nonumber\eea
where $f_1 (s)=f_0 (s)\, (s^2 -1)^{(k-3)/2}$. Hence
\bea l.h.s.&=&\frac{2^{k-2}\Gam (k/2)\sig_{k-1}}{\pi^{1/2}\, \Gam ((k-1)/2)} \intl_1^\infty f_0 (s)\, (s^2 -1)^{(k-3)/2} s ds \nonumber\\
&\times&\intl_s^\infty (t^2 - s^2)^{(k-3)/2}(t^2 -1)^{2-k}  u_0(t)\, dt\nonumber\\
&=&\frac{2^{k-2}\Gam (k/2)\, \sig_{k-1}}{\pi^{1/2}\, \Gam ((k-1)/2)\, \sig_{n-1}}\intl_{|y|>1}  f_0(|y|) (|y|^2 -1)^{(k-3)/2 }\frac{dy}{|y|^{n-2}} \nonumber\\
&\times& \intl_{|y|}^\infty (t^2 - |y|^2)^{(k-3)/2}(t^2 -1)^{2-k}  u_0(t)\, dt\nonumber\\
&=&\intl_{|y|>1}  f(y)\,  v(|y|)\, dy,\nonumber\eea
where
$v(|y|)$  is defined by (\ref{vsm}).
\end{proof}

\begin{remark} It is worth noting that the power $2^{k-2}$ in (\ref{vsmx1}) differs from $2^{k-1}$ in (\ref{vsmx1}) because the function $F_+$ in (\ref{forbq}) is defined on the symmetric interval $(-1,1)$, unlike $F_-$,
which is defined on $(1,\infty)$.
\end{remark}

\begin{example} Choose $u_0 (t)= (t^2 -1)^{\mu} t$, $\mu <(k-3)/2$,  in Lemma \ref{lem2d}.
This gives
\be\label {consd}
\intl_{\Sig^-_{n,k}}  (Q_-f)(x,\xi)\,\frac{(|x|-1)^{\mu} (2|x| -1)}{|x|^{k-1-\mu}} \, dx d_*\xi=c \intl_{\bbb^n_-} f(y)\,\frac{(|y|^2 -1)^{\mu}}{|y|^{n-2}}\, dy,\ee
\[
c=\frac{2^{k-3-2\mu}\Gam (k/2)\,\Gam ((k-3)/2 - \mu)\, \sig_{k-1}}{\pi^{1/2}\, \Gam (k-2 -\mu)\, \sig_{n-1}}.\]
On the other hand, setting $u_0 (t)=t^{-\a} (t^2 -1)^{k-2}$, $\a>k-2$, we obtain
\be\label {consd}
\intl_{\Sig^-_{n,k}}  (Q_-f)(x,\xi)\,\frac{(|x|-1)^{k-2}}{|x|\,(|x|-1/2)^{\a}} \, dx d_*\xi=\tilde c \intl_{\bbb^-} f(y)\,\frac{(|y|^2 -1)^{(k-3)/2}}{|y|^{n-k+\a}}\, dy,\ee
\[
\tilde c=\frac{\Gam (k/2)\,\Gam (1+ (\a-k)/2)\, \sig_{k-1}}{2^{k-\a-1}\pi^{1/2}\, \Gam ((1+\a)/2)\, \sig_{n-1}}.\]
We leave simple calculations to the interested reader.
\end{example}

As in the interior case, owing to (\ref{consd}), we conclude that $(Q_-f)(\xi,x)$ is finite a.e. provided that $f \in L^p (\bbb^n_-)$, $p>2/(k-1)$,  and is locally integrable away from the boundary $|x|=1$.

\section {Tangent Chords of the Half-Ball}\label {onclu}

In this section, we slightly change the notation and write $\bbb= \{ x= (x_1, \ldots, x_n)\in \rn: |x|<1\}$  for an open unit ball in $\rn$ and $\bbb_{+}= \{ x\in\bbb: x_n >0\}$ for  an open half-ball;  $n\ge 2$. Let also   $\bbs= \bbs^{n-1}$ be the unit sphere in $\rn$ with the north pole $e_n=(0, \ldots, 0,1)$, and let  $\bbs_{+} = \{ \theta \in\bbs: \theta_n >0\}$ be the corresponding  open  hemisphere.
  The notation $d(\cdot, \cdot)$ is used for the geodesic distance on $\bbs$.

 Given an integer $k$, $1\le k\le n-1$,  we denote by  $\frH$ the  family of all  cross-sections  (or {\it $k$-chords}) of the half-ball $\bbb_{+}$ by those $k$-dimensional affine planes, which are tangent to the equator
\be\label{equ}
\bbs^{n-2}=\{x\in \bbs^{n-1}:  x_n=0\}.\ee

 {\bf Question:} {\it Can we reconstruct a function $f$ on $\bbb_{+}$ from its integrals
\be\label {solid} (If)(h)=\intl_h f(y) \, d_h y\ee
over tangent $k$-chords $ h\in \frH$, where $d_h y $ is the Lebesgue measure on $h$? }

This question might be of interest from the point of view of the  medical tomography, when the examined object is located inside the half-ball and the signals should not cross the bottom of this half-ball. In this section  we show that if $f$ is  zonal, i.e., depends only on the last coordinate $x_n$,
the problem can be  solved explicitly by making use of fractional differentiation.

Every $k$-chord $h\in \frH$ can be indexed by the pair $(\theta, \xi)$, where
$\theta \in \bbs_{+}$ is the center of the $(k-1)$-dimensional geodesic sphere, which is the boundary of $h$, and $\xi$  is a $k$-dimensional linear subspace of the  $(n-1)$-dimensional subspace $\theta^\perp$ orthogonal to $\theta$ (and parallel to $h$). The  Grassmann manifold of all such $\xi$ will be denoted by $G_k (\theta^\perp)$. Then $(If)(h)\equiv (If)(\theta, \xi)$ can be explicitly written as
\be\label {solid1} (If)(\theta, \xi)=(1-t^2)^{k/2} \intl_{\bbb \cap \xi} f(\sqrt{1\!-\!t^2}\, \eta + t\theta)\, d\eta,
\ee
\[
\theta \in \bbs_{+}, \qquad \xi \in G_k (\theta^\perp), \qquad t=\sin d(e_{n},\theta)=\sqrt{1-\theta_n^2},\]
or (which is the same)
\be\label {solid1n} (If)(\theta, \xi)=\theta_n^k \intl_{\bbb \cap \xi} f(\theta_n \eta + \sqrt{1-\theta_n^2} \, \theta)\, d\eta.
\ee

\begin{remark} \label {imple} A simple geometric consideration
reveals two different classes of chords. We call them {\it  $A$-chords} and {\it  $B$-chords}.
 The $A$-chords correspond to $ 1/\sqrt {2} \le \theta_n <1$ (or $0< d (e_n, \theta) \le \pi/4$). They meet all the points in $\bbb_{+}$ and we set
\[\bbs_{A+}=\{\theta \in \bbs_{+}:  0< d (e_n, \theta) \le \pi/4\}. \]
In contrast, the $B$-chords correspond to  $0<\theta_n < 1/\sqrt {2}$ (or $\pi/4 > d (e_n, \theta) < \pi/2$). They do not meet the interior of the cone
\[
C_0= \{x=(x', x_n) \in \bbb_{+}: x' \in \bbr^{n-1}, \,x_n <1-|x'|\},\]
which is a `blind zone', invisible for the signals along these chords. Thus, in the following we restrict  to $\theta \in \bbs_{A+}$ and pay attention to the possible singularities on the border $d (e_n, \theta) =\pi/4$.
\end{remark}

\begin{lemma}\label{sslice} Let $\theta \in \bbs_{A+}$, $\xi \in G_k (\theta^\perp)$, $1\le k\le n-1$. If
$f(x)\equiv f_0 (x_n)$, $x\in \bbb_{+}$, then
\be\label {rsolid1} (If)(\theta, \xi)= a\, F(2b),\ee
where
\be\label {sorf}
a= \pi^{(k-1)/2} \,(1\!-\!\theta_n^2)^{-k/2} , \qquad b= \theta_n \sqrt{1\!-\!\theta_n^2},\ee
\be\label{reca}
F(r)=\frac{1}{\Gam ((k+1)/2)} \intl_0^{r} f_1(s)\, (r -s)^{(k-1)/2} ds, \ee
$f_1(s)=f_0(s)\, s^{(k-1)/2}$. It is assumed
that the integral in either side of (\ref{rsolid1}) exists in the Lebesgue sense.
\end{lemma}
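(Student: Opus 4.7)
The plan is to reduce the integral (\ref{solid1n}) over the $k$-dimensional ball $\bbb \cap \xi$ to a one-dimensional integral in the Riemann--Liouville form (\ref{reca}), exploiting both the zonal structure of $f$ and the tangency of the chord to the equator. First I would substitute $f(x)=f_0(x_n)$ into (\ref{solid1n}); since the $n$-th coordinate of $\theta_n\eta + \sqrt{1-\theta_n^2}\,\theta$ equals $\theta_n(\eta_n + \sqrt{1-\theta_n^2})$ with $\eta_n=\eta\cdot e_n$, the integrand depends on $\eta$ only through a single linear functional.

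The key geometric ingredient is that the tangency of the chord $h\in\frH$ to the equator forces the unit vector
\[ e'_n := \frac{e_n - \theta_n\,\theta}{\sqrt{1-\theta_n^2}} \in \theta^\perp \]
to lie in $\xi$. I would verify this by examining when the boundary sphere $\{\theta_n\eta + \sqrt{1-\theta_n^2}\,\theta:\eta\in\xi,\,|\eta|=1\}$ meets $\{x_n=0\}$ in exactly one point; this tangency is the implicit constraint built into the pair $(\theta,\xi)$. Completing $u_1=e'_n$ to an orthonormal basis $u_1,\ldots,u_k$ of $\xi$, I observe that each $u_j$ with $j\ge 2$ is orthogonal both to $\theta$ and to $e'_n$, so since $e_n\in\mathrm{span}(\theta,e'_n)$ one has $u_j\cdot e_n=0$. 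Consequently, writing $\eta=s_1 u_1+\cdots+s_k u_k$, we get $\eta_n=s_1\sqrt{1-\theta_n^2}$, and the integrand depends only on $s_1$.

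Next I would integrate out the perpendicular variables $s_2,\ldots,s_k$ over the $(k-1)$-ball of radius $\sqrt{1-s_1^2}$, producing the standard factor $\frac{\pi^{(k-1)/2}}{\Gam((k+1)/2)}(1-s_1^2)^{(k-1)/2}$. The remaining integral over $s_1\in(-1,1)$ is then converted to the advertised form by the substitution $s=\theta_n\sqrt{1-\theta_n^2}(s_1+1)=b(s_1+1)$, under which $1-s_1^2=s(2b-s)/b^2$ splits into exactly the shape of the Riemann--Liouville kernel on $(0,2b)$. After collecting constants and using $\theta_n^k/b^k=(1-\theta_n^2)^{-k/2}$, the right-hand side is recognized as $aF(2b)$ with the $a$, $b$, $F$ of the statement.

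The main obstacle is correctly locating the implicit tangency constraint on $\xi$, since the hypothesis reads only $\xi\in G_k(\theta^\perp)$; the restriction $e'_n\in\xi$ is hidden in the requirement that $(\theta,\xi)$ parameterize an element of $\frH$. Once this inclusion is established, the remainder is essentially a Fubini-plus-change-of-variables computation, structurally parallel to the proof of Lemma~\ref{lem1}.
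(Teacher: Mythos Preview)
Your argument is correct, but the route differs from the paper's. The paper pulls everything back by a rotation $\rho_\theta\in O(n)$ sending $e_n\mapsto\theta$ and $\bbr^k\mapsto\xi$, passes to polar coordinates $\eta=r\sigma$ in $\bbb^k$, and integrates over $\bbs^{k-1}$ via the slice formula; this leaves a double integral in $(r,u)$ which, after the substitution $s=bru+b$ and an interchange of order, collapses to (\ref{reca}). The tangency condition enters there as the unjustified equality $|p_\theta|=\sin d(e_n,\theta)$ for the projection of $\rho_\theta^{-1}e_n$ onto $\bbr^k$, which is precisely your inclusion $e'_n\in\xi$ in disguise. Your approach is more direct: by choosing Cartesian coordinates in $\xi$ with first axis $e'_n$, you see at once that the integrand depends only on $s_1$, integrate out the transverse $(k-1)$-ball in one stroke, and finish with a single substitution. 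The gain is brevity and a uniform treatment of all $k\ge 1$ (the paper handles $k=1$ separately); the paper's polar-coordinate method, on the other hand, is the same template it reuses in Lemmas~\ref{lem1}, \ref{TS1}, and Theorem~\ref{onal}, so it fits the overall narrative of the article.
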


\begin{proof} We denote $(\tilde If)(\theta, \xi)=(1-t^2)^{-k/2}(If)(\theta, \xi)$ and recall that
\[ t=\sin d(e_{n},\theta),  \qquad \theta_n =\sqrt{1-t^2}.\]
 Let $\rho_{\theta}\in O(n)$ be an orthogonal transformation which maps $e_{n}$ to $\theta$ and the subspace  $\bbr^k= \bbr e_1 \oplus \cdots \oplus \bbr e_k$ to $\xi \in G_k (\theta^\perp)$. Let  $\bbb^{k}$ and $\bbs^{k-1}$ be the unite ball and the unit sphere in  $\bbr^k$, respectively.  Then
\bea
(\tilde If)(\theta, \xi) &=&\intl_{\bbb^{k}}\! f(\rho_{\theta}(\sqrt {1-t^2} \eta +t e_{n}))\, d\eta\nonumber\\
&=&\intl_{\bbb^{k}} \! f_0(\sqrt {1-t^2} \,(\eta \cdot \rho^{-1}_{\theta} e_{n})+t\theta_n)\, d\eta\nonumber\\
&=&\intl_0^1 r^{k-1} dr \intl_{\bbs^{k-1}}  f_0(\sqrt {1-t^2} \,(r\sig \cdot \rho^{-1}_{\theta} e_{n})+t\theta_n)\, d\sig.\nonumber\eea

Suppose first that $k\ge 2$ and let $p_{\theta}$ be the orthogonal projection  of  $\rho^{-1}_{\theta} e_{n}$ onto $\bbr^{k}$. Then
\[
\sig \cdot \rho^{-1}_{\theta} e_{n}= |p_{\theta}| \,\sig \cdot p'_{\theta}, \qquad p'_{\theta}=\frac{p_{\theta}}{|p_{\theta}|}\in \bbs^{k-1}, \]
where $|p_{\theta}|=\sin d(e_{n},\theta)  =t$.  Denoting $b= t\sqrt{1-t^2}$ (cf. (\ref{sorf})), we have
\bea
(\tilde If)(\theta, \xi)&=&\intl_0^1 r^{k-1} dr \intl_{\bbs^{k-1}} f_0 (br\sig \cdot  p'_{\theta} +b)\, d\sig\nonumber\\
&=& \sig_{k-2}\intl_0^1 r^{k-1} dr \intl_{-1}^1  f_0 (bru+b)\, (1-u^2)^{(k-3)/2}\, du.\nonumber\eea
Changing variable $s=bru+b$ and noting that
\[
1-u^2=(1-u)(1+u)=\frac{(b(r+1)-s)(s-b (1-r))}{b^2r^2},\]
 we  obtain
\bea
(\tilde If)(\theta, \xi)&=&\sig_{k-2}\intl_0^1 r^{k-1} dr \nonumber\\
&\times& \intl_{b (1-r)}^{b (1+r)} f_0(s) \left (\frac{[b(r+1)-s][s-b (1-r)]}{b^2r^2} \right )^{(k-3)/2}\frac{ds}{br}\nonumber\\
&=&\frac{\sig_{k-2}}{b}\intl_0^{2b}  f_0(s) ds\intl_{|b-s|/b}^1 \left (r^2 -\Big (\frac{b-s}{b}\Big)^2\right )^{(k-3)/2} r dr\nonumber\\
&=&\frac{\pi^{(k-1)/2}}{b^k \Gam ((k+1)/2)} \intl_0^{2b}  f_0(s)(2b -s)^{(k-1)/2} s^{(k-1)/2}  ds.\nonumber\eea
Hence
\[(If)(\theta, \xi) = (1-t^2)^{k/2}(\tilde If)(\theta, \xi) = \pi^{(k-1)/2} t^{-k} F(2b), \quad t= \sqrt {1-\theta_n^2},\]
as in  (\ref{rsolid1}).
If $k=1$, the last equality formally yields
\be\label {kone}(If)(\theta, \xi)=\frac{1}{\sqrt {1-\theta_n^2}}\intl_0^{2\theta_n\sqrt{1-\theta_n^2}} \!\!\! f_0(s)\, ds. \ee
The same expression can be obtained straightforward from (\ref{solid1}).
\end{proof}

\begin{remark} Another simple reasoning leading to (\ref{rsolid1}) is the following. Choose any point $u$ on the equator $\bbs^{n-2}$ and another point $v$ on the geodesic arc $\overset{\Large\frown}{ue_n}$, connecting $u$ and $e_n$.
One can think of $u$ and $v$ as the signal source   as the detector in the corresponding physical model. The chord $[u,v]$ has a parametric equation $r(t)= tv+(1-t)u, 0\le t\le 1$, and the integral of the function $f$ over this chord is
\be\label {kone1}
(If)(u,v)= |u-v|^{-1} \intl_0^1 f(tv+(1-t)u)\,dt,\ee
where $|u-v|$ is the length of the chord.
\end{remark}

\begin{example}\label {ametr} Let
\[ f(x)= \frac{x_n^{\a - (k+1)/2}}{(1-x_n)^{\a + (k+1)/2}}, \qquad \a>0, \quad 1\le k\le n-1.\]
Then
\[F(r)\!=\!\frac{1}{\Gam ((k\!+\!1)/2)} \intl_0^{r} \!\frac{(r\! \!-s)^{(k-1)/2} s^{\a -1}}{(1\!-\!s)^{\a + (k+1)/2}}ds=\frac{\Gam (\a)}{\Gam (\a\!+\!(k\!+\!1)/2)} \, \frac{r^{\a + (k-1)/2}}{(1\!-\!r)^{\a}},\]
and therefore,
\be\label {hord}
(If)(\theta, \xi)=\frac{\pi^{(k-1)/2} \Gam (\a)}{\Gam (\a\!+\!(k\!+\!1)/2)} \, \frac{(2\theta_n \sqrt{1\!-\!\theta_n^2})^{\a + (k-1)/2}}{(1\!-\!\theta_n^2)^{k/2}  \, (1-2\theta_n \sqrt{1\!-\!\theta_n^2})^\a}.\ee
In particular, for $k=1$ and $\a=1$ we have
\be\label {hord1} f(x)=(1-x_n)^{-2}, \qquad (If)(\theta, \xi)=\frac{\theta_n}{1-2\theta_n \sqrt{1\!-\!\theta_n^2}}.\ee

Note that $1-2\theta_n \sqrt{1\!-\!\theta_n^2}=0$ if $\theta_n =1/\sqrt {2}$. This  gives a singularity of $(If)(\theta, \xi)$, which was mentioned in Remark  \ref{imple}.
\end{example}

Lemma \ref{sslice} and Remark \ref{imple} imply the following inversion result.

\begin{theorem}\label{teo1mch} Let $1\le k\le n-1$, $f(x)\equiv f_0 (x_n)$, $x\in \bbb_{+}$. Suppose that  $(If)(\theta, \xi)=\Phi (\theta_n)$ for all $\theta \in \bbs_{A+}$, $\xi \in G_k (\theta^\perp)$.
If
 \be\label{foxbrch} \intl_0^{1-\del} |f_0 (s)| \, s^{(k-1)/2} s \,ds <\infty\quad \text{for every} \quad \del \in (0,1), \ee
then
 $f_0 (s)$ can be reconstructed from $\Phi$ for almost all  $s\in (0,1)$ by the formula
 \be\label{foeh}
f_0 (s)=(s^2 -1)^{(1-k)/2} (\Cal D^{(k+1)/2}_{0 +} F)(s), \ee
where
\be\label {ead} F(r)= \pi^{(1-k)/2} \left(\frac{1-\sqrt{1-r^2}}{2}\right )^{k/2}\Phi\left (\sqrt{\frac{1+\sqrt{1-r^2}}{2}}\, \right)\ee
and the fractional derivative $\Cal D^{(k+1)/2}_{0 +} F$ is defined by (\ref{frd+}).
\end{theorem}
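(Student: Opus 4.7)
The plan is to extract from Lemma \ref{sslice} a hidden Riemann--Liouville equation for the auxiliary function $F$, to solve it for $F$ in terms of the data $\Phi$, and then to invert the fractional integral using Lemma \ref{l32}. The integrability hypothesis (\ref{foxbrch}) is tailored precisely to make that last step legitimate.

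First, I combine the hypothesis $(If)(\theta,\xi)=\Phi(\theta_n)$ with formula (\ref{rsolid1}) of Lemma \ref{sslice} to obtain
\[
F\!\left(2\theta_n\sqrt{1-\theta_n^2}\right)\;=\;\pi^{(1-k)/2}(1-\theta_n^2)^{k/2}\,\Phi(\theta_n),\qquad \theta\in\bbs_{A+}.
\]
Next I invert the substitution $r=2\theta_n\sqrt{1-\theta_n^2}$. Squaring and setting $u=\theta_n^2$ gives $4u^2-4u+r^2=0$, hence $u=(1\pm\sqrt{1-r^2})/2$. The $A$-chord restriction $\theta_n\geq 1/\sqrt{2}$ (equivalently $u\geq 1/2$) singles out the root $u=(1+\sqrt{1-r^2})/2$, so that $\theta_n=\sqrt{(1+\sqrt{1-r^2})/2}$ and $1-\theta_n^2=(1-\sqrt{1-r^2})/2$. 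As $\theta_n$ ranges over $(1/\sqrt{2},1)$, the variable $r$ covers $(0,1)$ bijectively. Inserting these identities into the displayed equation produces exactly formula (\ref{ead}) for $F(r)$, now expressed entirely in terms of the data.

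Finally, (\ref{reca}) exhibits $F$ as the Riemann--Liouville fractional integral $F=I^{(k+1)/2}_{0+}f_1$ with $f_1(s)=f_0(s)\,s^{(k-1)/2}$. The hypothesis (\ref{foxbrch}) guarantees $f_1\in L^1(0,1-\delta)$ for every $\delta\in(0,1)$, so Lemma \ref{l32} applies on each such interval and gives $f_1=\Cal D^{(k+1)/2}_{0+}F$ almost everywhere on $(0,1)$. Dividing by $s^{(k-1)/2}$ recovers $f_0$ and yields the reconstruction formula (\ref{foeh}). The only genuinely delicate point is the change of variable: on the full hemisphere $\bbs_{+}$ the map $\theta_n\mapsto 2\theta_n\sqrt{1-\theta_n^2}$ is two-to-one, and the reduction to $\bbs_{A+}$ (rather than to $B$-chords, which leave the cone $C_0$ unseen) is precisely what restores injectivity and makes $F$ unambiguously recoverable from $\Phi$.
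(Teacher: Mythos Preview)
Your proof is correct and follows essentially the same route as the paper: you invoke Lemma \ref{sslice} to write $\Phi(\theta_n)=a\,F(2b)$, invert the map $\theta_n\mapsto r=2\theta_n\sqrt{1-\theta_n^2}$ on the branch $\theta_n\in(1/\sqrt{2},1)$ to recover $F$ in terms of $\Phi$ (your quadratic computation is just an explicit version of the paper's monotonicity remark leading to (\ref{imme})), and then apply Lemma \ref{l32} to $F=I^{(k+1)/2}_{0+}f_1$ under the integrability hypothesis. Your closing remark on why the restriction to $\bbs_{A+}$ is needed for injectivity is a welcome clarification that the paper leaves implicit.
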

\begin{proof}
 By  (\ref{rsolid1}) and
(\ref{sorf}),
\[
\Phi (\theta_n)= \pi^{(k-1)/2} \,(1\!-\!\theta_n^2)^{-k/2} F(2\theta_n \sqrt{1\!-\!\theta_n^2}).\]
 The function $r\equiv r(\theta_n)=2\theta_n \sqrt{1\!-\!\theta_n^2}$ attains its maximum  $r_{\max}=1$ at $\theta_n =1/\sqrt {2}$ and is monotonically decreasing for   $1/\sqrt {2} \le \theta_n <1$. On this interval we have
\be\label {imme} \theta_n=\sqrt{\frac{1+\sqrt{1-r^2}}{2}}.\ee
This gives (\ref{ead}).  The equality (\ref{foeh}) then becomes  an immediate consequence of Lemmas \ref{sslice} and  \ref{l32}.
\end{proof}

 The one-sided structure of the  fractional integral  (\ref{reca})  implies the following support theorem.
 \begin{theorem}\label{teosd} Let $f(y)=f_0 (x_n)$ obey the condition (\ref{foxbrch}). Given   $0<h<1$, let
\[ \theta_h=\sqrt{\frac{1+\sqrt{1-h^2}}{2}} \qquad \text {(cf. (\ref{imme})}.\]
 If   $(If)(\theta, \xi)=0$ for all $\theta_n \in (\theta_h, 1)$, then $f(x)=0$ for all $x_n \in (0, h)$.\end{theorem}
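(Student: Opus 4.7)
The plan is to reduce the vanishing of $(If)(\theta,\xi)$ on the prescribed set to the vanishing of a one-dimensional Riemann-Liouville fractional integral on an interval $(0,h)$, then invoke the injectivity of $I^{(k+1)/2}_{0+}$ from Lemma \ref{l32}.

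First, I would apply Lemma \ref{sslice}: since $f(x) = f_0(x_n)$ is zonal, we have
\[
(If)(\theta,\xi) = \pi^{(k-1)/2} (1-\theta_n^2)^{-k/2}\, F\bigl(2\theta_n\sqrt{1-\theta_n^2}\bigr),
\]
where $F = I^{(k+1)/2}_{0+} f_1$ with $f_1(s) = f_0(s)\, s^{(k-1)/2}$. The prefactor is finite and non-zero for $\theta_n \in (\theta_h, 1) \subset (1/\sqrt{2}, 1)$, so the hypothesis $(If)(\theta,\xi) = 0$ is equivalent to $F(r) = 0$ at $r = 2\theta_n\sqrt{1-\theta_n^2}$.

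Next I would track the change of variables $r(\theta_n) = 2\theta_n\sqrt{1-\theta_n^2}$ on the interval $\theta_n \in (\theta_h, 1)$. On $(1/\sqrt{2}, 1)$, the function $r$ is strictly decreasing, with $r(1) = 0$ and, by a direct computation using $\theta_h^2 = (1+\sqrt{1-h^2})/2$ and $1-\theta_h^2 = (1-\sqrt{1-h^2})/2$, one obtains $4\theta_h^2(1-\theta_h^2) = 1-(1-h^2) = h^2$, i.e., $r(\theta_h) = h$. Hence the range of $r$ as $\theta_n$ sweeps $(\theta_h, 1)$ is precisely $(0, h)$, and the vanishing hypothesis becomes
\[
F(r) = \bigl(I^{(k+1)/2}_{0+} f_1\bigr)(r) = 0 \qquad \text{for all } r \in (0,h).
\]

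Finally, the integrability assumption (\ref{foxbrch}) ensures $f_1 \in L^1(0, 1-\delta)$ for every $\delta > 0$; in particular $f_1 \in L^1(0,h)$. Applying $\Cal D^{(k+1)/2}_{0+}$ to both sides and invoking Lemma \ref{l32} on $(0,h)$ yields $f_1(s) = 0$ for almost every $s \in (0,h)$, and since $s^{(k-1)/2} > 0$ on this interval, we conclude $f_0(s) = 0$ a.e. on $(0,h)$, i.e., $f(x) = 0$ for $x_n \in (0,h)$.

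The only non-routine step is the verification that $\theta_n \in (\theta_h, 1)$ corresponds bijectively to $r \in (0,h)$ under the non-monotone map $r(\theta_n)$ restricted to the correct branch — this is exactly why one must work on $(1/\sqrt{2}, 1)$ (the $A$-chord regime of Remark \ref{imple}), where $r$ is monotone; otherwise two distinct $\theta_n$ could give the same $r$ and no conclusion about $F$ on an interval would follow from the hypothesis.
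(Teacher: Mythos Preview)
Your proof is correct and follows exactly the approach the paper indicates: the paper states only that ``the one-sided structure of the fractional integral (\ref{reca}) implies the following support theorem,'' and you have spelled out precisely this argument---reduce via Lemma~\ref{sslice} to $I^{(k+1)/2}_{0+}f_1=0$ on $(0,h)$ using the monotone branch of $r(\theta_n)$ on $(1/\sqrt2,1)$, then apply Lemma~\ref{l32}. Your verification that $(\theta_h,1)\subset(1/\sqrt2,1)$ and that $r(\theta_h)=h$ is the only point requiring care, and you handle it correctly.
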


\section{Tangent Cross-Sections of the Unit Sphere}\label{rence}

\subsection{Transition to Fractional Integrals}

Let $\bbs\equiv \bbs^{n-1}$  be the unit sphere in $\bbr^{n}$, $n\ge 3$, with the geodesic distance $d(\cdot, \cdot)$. We regard the north pole $e_{n}=(0, \ldots, 0, 1)$  as
 the origin of $\bbs$ and  fix the parallel of latitude
\[\P_\a=\{\eta=(\eta_1, \ldots, \eta_{n}) \in \bbs: d(e_{n}, \eta) = \a\}, \qquad 0<\a<\pi,\]
 which divides $\bbs$ in two parts:
\be\label {tzang} \bbs_\a^+\!=\!\{\eta \in \bbs^{n-1}:  d(e_{n}, \eta) < \a\}, \quad \bbs_\a^-\!=\!\{\eta \in \bbs^{n-1}:  d(e_{n}, \eta) > \a\}.\ee
The case $\a=\pi/2$ corresponds to the equator of $\bbs$.

Let $k$ be a fixed integer, $2\le k\le n-1$. We denote by $T_\a\equiv T_{\a,n,k}$ the set of all $k$-dimensional affine planes $\t$ in $\bbr^{n}$ which are tangent to  $\P_\a$ and meet the interior of the unit ball
$|x|<1$. The corresponding collections of `upper' and `lower' $(k-1)$-dimensional cross-sections (spherical slices) $\gam (\t)= \bbs^{n-1} \cap \t$  are defined by
\[
\Gam_\a^{\pm}=\{\gam (\t)\subset \overline{\bbs_\a^{\pm}}: \, \t \in T_\a, \},\]
where  $\overline{\bbs_\a^{\pm}}$ denote the closures of $\bbs_\a^{\pm}$, respectively.  Every cross-section $\gam (\t) \in \Gam_\a^{\pm}$ is a $(k-1)$-dimensional geodesic sphere centered at some point $\theta \in \bbs_\a^{\pm}$, so we can adopt the parametrization  $\gam (\t)=\gam (\theta, \xi)$, where $\xi\in G_k (\theta^\perp)$, the Grassmann manifold of all $k$-dimensional linear subspaces of $\theta^\perp$.

 The sets  $\Gam_\a^{+}$  and $\Gam_\a^{-}$ can be thought of as the fiber bundles with the bases  $\bbs_\a^{+}$ and $\bbs_\a^{-}$, respectively, and the canonical projection $\pi: \gam (\theta, \xi) \to \theta$. The fiber $\pi^{-1}\theta$ over
 the point $\theta$ is the set of all $k$-dimensional linear subspaces of $\theta^\perp$.
If $\b= d(\theta, \P_\a)$, then, clearly,
\be\label {tango}
d(e_{n+1}, \theta) = \left \{\begin{array} {ll} \a-\b \quad \mbox {\rm if }  \quad  \theta \in \bbs_\a^{+},\\
 \a+\b \quad \mbox {\rm if }  \quad  \theta \in \bbs_\a^{-}.
 \end{array}\right.\ee

 Our main objective is the  Radon-type transforms
\be\label {tang}
(R^{\pm} f)(\theta, \xi)=\intl_{\gam (\theta, \xi)} f(\eta)\, d_{\theta, \xi}\eta, \qquad \gam (\theta, \xi) \in \Gam_\a^{\pm},\ee
 $d_{\theta, \xi}\eta$ being the normalized surface area measure on $\gam (\theta, \xi)$. These operators
  are intimately related to the  spherical means on $\bbs$ defined by
 \be \label{sphm0}
(M_{\th, \xi} f)(t)=\frac{1}{\sig_{k-1}}\intl_{\bbs \cap
\xi}
\!\!f( \sqrt{1-t^2}\, \eta +t\theta)\,d\eta, \quad \xi \in G_k(\theta ^\perp), \;   -1\le t\le 1;\ee
cf., e.g., \cite[p. 503, 537]{Ru15} and references therein for $k=n-1$.
%\edz {The case $k<n-1$ when $(M_{\th, \xi} f)(t)=(M_t f)(\th, \xi)$ is a function on the fiber bundle, is an interesting object for investigation.}

Clearly, $(M_{\th, \xi} f)(\pm \,1)= f(\pm \,\th)$ if $f$ is good enough, and
\be \label{spha}(R^{\pm} f)(\theta, \xi)= (M_{\th, \xi} f)(t) \quad \text {\rm if} \quad t=\cos d (\theta, \P_\a) \quad (\equiv \cos \b).\ee

Our main concern is the existence of the integrals (\ref{tang}) for $f$ belonging to Lebesgue spaces,  the  singularities of $f$ and $R^{\pm} f$ at the boundary $\P_\a$ and at the poles $\pm e_{n}$, the inversion formulas for the operators  $R^{\pm}$.  One should take into account that these operators  are $O(n-1)$-equivariant, i.e. commute with orthogonal transformations preserving the last coordinate axis.

We mainly restrict  to the case when $f$ is  zonal (i.e.  $O(n-1)$-invariant). In this case, $f(\eta)\equiv f_0(\eta_{n})$ and $(R^{\pm} f)(\theta, \xi)\equiv\vp^{\pm}_0(\theta_{n})$, where
 $f_0$ and $\vp^{\pm}_0$ are single-variable functions on the interval $(\cos \a, 1)$ or $(-1, \cos \a)$.

  \begin{lemma}  \label {TS1}  Let  $0<\a<\pi$, $a=\cos \a$, $\b= d(\theta, \P_\a)$, $2\le k\le n-1$ . If $f(\eta)\equiv f_0(\eta_{n})$, then
 \be \label{rpha}
 (R^{\pm} f)(\theta, \xi)= u_{\pm}(\theta)  \, \Phi_{\pm} (v_{\pm}(\theta)),\ee
 where
 \[
 u_{\pm}(\theta)=\frac{\Gam (k/2)}{\pi^{1/2}} \,(\sin \b \,\sin (\a\mp\b))^{2-k}, \qquad v_{\pm}(\theta)=\cos (\a\mp 2\b), \]
 \[
 \Phi_+ (t)=\frac{1}{\Gam ((k\!-\!1)/2)} \intl_{a}^{t}\!\! f_1 (s) \,(t-s)^{(k-3)/2} \,ds, \quad t\in (a,1), \]
 \[ \Phi_- (t)=\frac{1}{\Gam ((k\!-\!1)/2)} \intl_{t}^{a} \!\!f_1 (s) \,(s-t)^{(k-3)/2} \,ds, \quad t\in (-1,a),
 \]
  \[   f_1 (s)= f_0 (s) |s-a|^{(k-3)/2}.\]
 \end{lemma}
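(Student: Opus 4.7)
The plan is to combine the identity $(R^{\pm}f)(\theta,\xi)=(M_{\theta,\xi}f)(t)$ with $t=\cos\b$ from (\ref{spha}) with the zonality of $f$, reducing the $(k-1)$-dimensional spherical mean to a one-variable integral, and then recognize it as a Riemann--Liouville fractional integral after a trigonometric change of variables.

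I would begin by extracting the tangency constraint: for $\gam(\theta,\xi)$ to be tangent to $\P_\a$, the unique point of the geodesic $\b$-sphere about $\theta$ closest to $\P_\a$ must lie on $\gam(\theta,\xi)$, which forces the meridian direction $\theta'=(e_n-\theta_n\theta)/\sqrt{1-\theta_n^2}$ to belong to $\xi$. Consequently, for any $\omega\in\bbs\cap\xi$ one has $\omega\cdot e_n=\sqrt{1-\theta_n^2}\,(\omega\cdot\theta')$. Substituting $\eta=\sqrt{1-t^2}\,\omega+t\theta$ into (\ref{sphm0}), using $f(\eta)=f_0(\eta_n)$, and integrating out the residual $O(k-1)$-action in $\xi\cap(\theta')^\perp$ produces
\[
(R^{\pm}f)(\theta,\xi)=\frac{\sig_{k-2}}{\sig_{k-1}}\intl_{-1}^1 f_0\bigl(t\theta_n+\sqrt{1-t^2}\sqrt{1-\theta_n^2}\,u\bigr)(1-u^2)^{(k-3)/2}\,du,
\]
which incidentally shows that the outcome is independent of the remaining freedom in $\xi$.

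Next I would invoke (\ref{tango}) to rewrite the argument of $f_0$ as $s(u)=\cos\b\cos(\a\mp\b)+\sin\b\sin(\a\mp\b)\,u$ and change variables from $u$ to $s$, with Jacobian $\sin\b\sin(\a\mp\b)$. The product-to-sum identities produce the decisive factorization
\[
1-u^2=\frac{(v_+(\theta)-s)(s-a)}{(\sin\b\sin(\a-\b))^2}\qquad (+\text{ case})
\]
and the analogous $(a-s)(s-v_-(\theta))/(\sin\b\sin(\a+\b))^2$ in the $-$ case, with the endpoints $u=\mp 1\mapsto s=a$ and $u=\pm 1\mapsto s=v_\pm(\theta)$. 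Absorbing $|s-a|^{(k-3)/2}$ into $f_1$ as prescribed, using $\sig_{k-2}/\sig_{k-1}=\Gam(k/2)/(\sqrt{\pi}\,\Gam((k-1)/2))$, and comparing with (\ref{rlfil})--(\ref{rlfir}), the remaining integral is exactly $\Phi_\pm(v_\pm(\theta))$ multiplied by $u_\pm(\theta)=\pi^{-1/2}\Gam(k/2)(\sin\b\sin(\a\mp\b))^{2-k}$. The principal delicacy is the sign bookkeeping: the $+$ case must yield a right-sided integral on $(a,1)$ while the $-$ case produces a left-sided one on $(-1,a)$, so one must correctly orient the endpoints so that the potentially singular point $s=a$ appears as the lower or the upper limit of integration, respectively.
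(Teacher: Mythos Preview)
Your proposal is correct and follows essentially the same route as the paper: reduce the spherical mean to a one-variable integral via slice integration, then perform the trigonometric change of variables $u\mapsto s$ and factor $1-u^2$ using product-to-sum identities. The only notable difference is one of presentation: you explicitly argue that tangency forces the meridian direction $\theta'=(e_n-\theta_n\theta)/\sqrt{1-\theta_n^2}$ to lie in $\xi$, and then use $\omega\cdot e_n=\sqrt{1-\theta_n^2}\,(\omega\cdot\theta')$ directly; the paper instead introduces a rotation $\rho_\theta$ sending $e_n\mapsto\theta$ and $\bbr^k\mapsto\xi$, projects $\rho_\theta^{-1}e_n$ onto $\bbr^k$, and asserts $|p_\theta|=\sin d(e_n,\theta)$—an equality that in fact relies on the very constraint $\theta'\in\xi$ you isolate. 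Your version makes this dependence on the tangency hypothesis transparent, which is a minor expository gain; otherwise the two arguments coincide line by line, including the identification of the endpoints $s=a$ and $s=v_\pm(\theta)$ and the constant $\sig_{k-2}/\sig_{k-1}=\Gam(k/2)/(\pi^{1/2}\Gam((k-1)/2))$.
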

\begin{proof}
 Owing to the $O(n-1)$-invariance, it suffices to assume that
 \be \label{sphat}\theta = (0, \ldots, 0, \theta_{n-1}, \theta_{n}), \qquad \theta_{n-1} > 0.\ee
 Let $\rho_{\theta}\in O(n)$ be an orthogonal transformation which maps $e_{n}$ to $\theta$ and the subspace  $\bbr^k= \bbr e_1 \oplus \cdots \oplus \bbr e_k$ to $\xi\subset \theta ^\perp$. Denoting by  $\bbs^{k-1}$ the unit sphere in  $\bbr^k$, for $f$ zonal we obtain
\bea
(R^{\pm} f)(\theta, \xi)&=&\frac{1}{\sig_{k-1}}\intl_{\bbs^{k-1}} f(\rho_{\theta}(\sqrt {1-t^2} \sig +t e_{n}))\, d\sig \nonumber\\
&=&\frac{1}{\sig_{k-1}}\intl_{\bbs^{k-1}} f_0(\sqrt {1-t^2} (\sig \cdot \rho^{-1}_{\theta} e_{n})+t \theta_{n})\, d\sig,\nonumber\eea
$t=\cos d (\theta, \P_\a)=\cos \b$; cf. (\ref{spha}).

Let $p_{\theta}$ be the orthogonal projection  of  $\rho^{-1}_{\theta} e_{n}$ onto $\bbr^{k}$. Then
\[
\sig \cdot \rho^{-1}_{\theta} e_{n}= |p_{\theta}| \,\sig \cdot p'_{\theta}, \quad p'_{\theta}=\frac{p_{\theta}}{|p_{\theta}|}\in \bbs^{k-1},\quad |p_{\theta}|=\sin d(e_{n},\theta), \]
and therefore (cf. \cite [f. (1.12.13)]{Ru15})
\[(R^{\pm} f)(\theta)=\frac{\sig_{k-2}}{\sig_{k-1}}\intl_{-1}^1 f_0(u\sqrt {1-t^2} |p_{\theta}| +t \theta_{n})\,(1-u^2)^{(k-3)/2}\, du.\]
We recall that
\[\a= d(e_{n}, \P_\a),  \quad \b= d(\theta, \P_\a),  \quad t=\cos \b, \quad \sqrt {1\!-\!t^2}=\sin \b\]
 (see (\ref{spha})). Then
\bea \label{spot} |p_{\theta}|&=&\sin d(e_{n},\theta)=\left \{\begin{array} {ll} \sin (\a-\b) \quad \mbox {\rm if }  \quad  \theta \in \bbs_\a^{+},\\
 \sin (\a+\b) \quad \mbox {\rm if }  \quad  \theta \in \bbs_\a^{-};
 \end{array}\right .\\
  \label{spot1}
\theta_{n}&=&\cos d(e_{n},\theta)=\left \{\begin{array} {ll} \cos (\a-\b) \quad \mbox {\rm if }  \quad  \theta \in \bbs_\a^{+},\\
 \cos (\a+\b) \quad \mbox {\rm if }  \quad  \theta \in \bbs_\a^{-}.
 \end{array}\right .\eea
% In the  case $\a=\pi/2$, (\ref{spot1}) becomes
% \be \label{spot2}
% \theta_{n+1}=\pm \sin \b, \quad \text {\rm if}  \quad \theta \in  \Sig^{\pm}\equiv \Sig_{\pi/2}^{\pm},  \;\text {\rm resp.}\ee
Let $\theta \in \Sig_\a^{+}$. Then
\[
(R^{+} f)(\theta, \xi)=\frac{\sig_{k-2}}{\sig_{k-1}}\intl_{-1}^1 f_0(u\sin \b\, \sin (\a-\b)+ \cos \b \cos (\a-\b))\,(1-u^2)^{(k-3)/2}\, du.\]
Setting
\[
u\sin \b \,\sin (\a-\b) + \cos \b\, \cos (\a-\b)=s, \qquad u=\frac{s-\cos \b \,\cos (\a-\b)}{\sin \b \,\sin (\a-\b)},\]
and noting that
\[
1-u^2=(1-u)(1+u)=\frac{(\cos (2 \b -\a)-s)(s-\cos \a)}{\sin^2 \b\, \sin^2 (\a-\b)},\]
 we  obtain
\be \label{sphm0s}
(R^{+} f)(\theta, \xi)=c \,(\sin \b \,\sin (\a-\b))^{2-k}\intl_{\cos \a}^{\cos (\a-2\b)} f_1 (s) \,(\cos (\a-2\b)-s)^{(k-3)/2}\, ds,\ee
\[ c=\sig_{k-2}/\sig_{k-1}, \qquad f_1 (s)= f_0 (s) (s-\cos \a)^{(k-3)/2}\]
 (note that $\cos (\a-2\b)> \cos \a$).
 This gives (\ref{rpha}).

If $\theta \in \bbs_\a^{-}$, then
\[
(R^{-} f)(\theta)=\frac{\sig_{k-2}}{\sig_{k-1}}\intl_{-1}^1 f_0(u\sin \b\, \sin (\a+\b)+ \cos \b \cos (\a+\b))\,(1-u^2)^{(k-3)/2}\, du.\]
We observe that  $\sin (\a+\b)>0$ because  the assumption (\ref{sphat}) yields $0<\a+\b<\pi$.
Setting
\[
u\sin \b \,\sin (\a+\b) + \cos \b\, \cos (\a+\b)=s, \qquad u=\frac{s-\cos \b \,\cos (\a+\b)}{\sin\b \,\sin (\a+\b)},\]
and noting that
\[
1-u^2=\frac{(s-\cos (2 \b +a))(\cos \a -s)}{\sin^2 \b\, \sin^2 (\a+\b)},\]
we  obtain
\be \label{sphm0s1}
(R^{-} f)(\theta, \xi)=c \,(\sin \b \,\sin (\a+\b))^{2-k}\intl^{\cos \a}_{\cos (\a+2\b)} f_1 (s) \,(s-\cos (\a+2\b))^{(k-3)/2}\, ds,\ee
\[ c=\sig_{k-2}/\sig_{k-1}, \qquad f_1 (s)= f_0 (s) (\cos \a -s)^{(k-3)/2}. \]
 This gives (\ref{rpha}) (note  that $\cos (\a+2\b)< \cos a$).
\end{proof}

   \begin{corollary} \label {pti} Let $f(\eta)\equiv f_0(\eta_{n})$, $\eta \in \bbs^{n-1}$.
 If $\a=\pi/2$ (the equatorial tangency),  then (\ref{rpha}) holds with
\[
  u_{\pm}(\theta)=\frac{2^{k-2} \Gam (k/2)}{\pi^{1/2}} \,(\sin 2\b)^{2-k}, \qquad v_{\pm}(\theta)= \pm\sin 2\b, \]
 $a=0$, and  $f_1 (s)= f_0 (s)\, |s|^{(k-3)/2}$. Alternatively, setting
\[ b= |\theta_{n}|\sqrt {1-\theta_{n}^2}, \qquad \tilde c =\frac{\Gam (k/2)}{\pi^{1/2}\Gam ((k\!-\!1)/2)},\]
 (cf.  Lemma \ref {sslice}) we have
 \be \label{sphes2}
(R^{+} f)(\theta, \xi)=\tilde c \, (2b)^{2-k}\!\!\intl_0^{2b} \!\!f_1 (s)\,(2b\!-\!s)^{(k-3)/2} \, ds,\ee
\be \label{sphes3}
(R^{-} f)(\theta, \xi)=\tilde c \, (2b)^{2-k}\!\!\intl^0_{-2b} \!\! f_1 (s)\,(s\!+\!2b)^{(k-3)/2} \, ds.\ee
  \end{corollary}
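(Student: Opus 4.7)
The plan is to specialize Lemma \ref{TS1} to the case $\alpha = \pi/2$ by evaluating each of $u_\pm(\theta)$, $v_\pm(\theta)$, $a$, and $f_1$ that appear in (\ref{rpha}). Setting $a = \cos(\pi/2) = 0$ is immediate and yields $f_1(s) = f_0(s)\,|s|^{(k-3)/2}$, as claimed.

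The only real computation is the trigonometric simplification. Using $\sin(\pi/2 \mp \beta) = \cos\beta$ I get $\sin\beta \cdot \sin(\alpha \mp \beta) = \sin\beta\cos\beta = \tfrac{1}{2}\sin 2\beta$, so raising to the power $2-k$ produces exactly the factor $2^{k-2}(\sin 2\beta)^{2-k}$ asserted in $u_\pm$. Similarly, $\cos(\pi/2 \mp 2\beta) = \pm\sin 2\beta$ gives $v_\pm = \pm\sin 2\beta$. Substituting these values into (\ref{rpha}) delivers the first form of the corollary.

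To pass to the alternative formulation in terms of $b = |\theta_n|\sqrt{1-\theta_n^2}$, I would invoke the second line of (\ref{spot1}), which in the equatorial case reads $\theta_n = \cos(\pi/2 \mp \beta) = \pm\sin\beta$. This gives $|\theta_n| = \sin\beta$ and $\sqrt{1-\theta_n^2} = \cos\beta$ (both nonnegative for $\beta \in (0,\pi/2)$), so $b = \sin\beta\cos\beta = \tfrac{1}{2}\sin 2\beta$, i.e.\ $\sin 2\beta = 2b$. Feeding this back into the factor $(\sin 2\beta)^{2-k}$, the upper limit $v_+$ and the kernel of $\Phi_+$ recovers (\ref{sphes2}). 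For $R^-$ one additionally notes that $v_- = -\sin 2\beta = -2b$ is negative, so the range in the expression for $\Phi_-$ becomes $[-2b,0]$ and the kernel becomes $(s+2b)^{(k-3)/2}$, which after substitution produces (\ref{sphes3}).

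No genuine obstacle is involved; the corollary is a direct specialization of Lemma \ref{TS1}. The only point requiring a little care is the consistent bookkeeping of signs of $\theta_n$ and $v_\pm$ across the two hemispheres $\bbs_{\pi/2}^\pm$, so that the $\pm$ conventions in the limits of integration remain compatible with the right-sided form $\Phi_+$ and the left-sided form $\Phi_-$ from Lemma \ref{TS1}, and with the positivity of $b$ in the definition $b=|\theta_n|\sqrt{1-\theta_n^2}$.
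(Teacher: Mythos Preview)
Your approach is exactly the intended one: the paper offers no separate proof, treating the corollary as an immediate specialization of Lemma~\ref{TS1}, and your trigonometric reductions ($\sin(\pi/2\mp\beta)=\cos\beta$, $\cos(\pi/2\mp 2\beta)=\pm\sin 2\beta$, hence $\sin\beta\sin(\alpha\mp\beta)=\tfrac12\sin 2\beta=b$ and $v_\pm=\pm 2b$) are all correct.

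One caution: you write ``recovers (\ref{sphes2})'' without actually checking the constant. If you carry the computation through, substituting $\sin 2\beta=2b$ into the first displayed form of $u_\pm$ gives
\[
u_+(\theta)\,\Phi_+(v_+(\theta))=\frac{2^{k-2}\Gamma(k/2)}{\pi^{1/2}}(2b)^{2-k}\cdot\frac{1}{\Gamma((k-1)/2)}\int_0^{2b} f_1(s)(2b-s)^{(k-3)/2}\,ds
=2^{k-2}\,\tilde c\,(2b)^{2-k}\int_0^{2b}\!\cdots,
\]
which differs from (\ref{sphes2}) as printed by a factor $2^{k-2}$ (equivalently, the prefactor should be $\tilde c\,b^{2-k}$ rather than $\tilde c\,(2b)^{2-k}$). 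A direct check with $k=3$, $f_0(s)=(1-s)^{-2}$ confirms this: the spherical mean computes to $(1-2b)^{-1}$, not $\tfrac12(1-2b)^{-1}$. So your derivation is sound; the discrepancy is a misprint in the stated alternative form, which your argument would have exposed had you tracked the constant explicitly.
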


  Lemma \ref{TS1} and Corollary \ref{pti}, in conjunction with the corresponding formulas for fractional derivatives from Section \ref{frac},  can be used for establishing explicit inversion formulas for the operators $R^{\pm}$ and the relevant support theorems,  as it was done in Sections \ref{noni} and \ref {onclu}. We leave these exercises to the interested reader.

\begin{example}\label {example 4.4} Let
\[
f(\eta)= \frac{\eta_n^{\b +(1-k)/2}}{(1-\eta_n)^{\b +(k-1)/2}},  \qquad 2\le k\le n-1, \qquad \b >0.\]
 Then a straightforward calculation in (\ref{sphes2}) yields
\be \label {mple} (R^{+} f)(\theta, \xi)= c \,\frac{(2\theta_n  \sqrt{1-\theta_n^2})^{\b +(1-k)/2}}{(1-2\theta_n  \sqrt{1-\theta_n^2})^\b}, \quad
c=\frac{\Gam (k/2)\, \Gam (\b)}{\pi^{1/2}\Gam (\b+(k\!-\!1)/2)}.\ee
In particular, if $\b=(k-1)/2$, i.e., $f(\eta)=(1-\eta_n)^{1-k}$, $2\le k\le n-1$, we have
\be \label {mples} (R^{+} f)(\theta, \xi)= 2^{2-k} \, (1-2\theta_n  \sqrt{1-\theta_n^2})^{(1-k)/2}.  \ee

For $0< \theta_n <1$, the equality $1-2\theta_n  \sqrt{1-\theta_n^2}=0$ is equivalent to $\theta_n=1/\sqrt {2}$. Thus the singularity of $f$ at the pole $e_n$ yields the singularity of $R^{+}f$ on the cross-section
 $ d(e_n, \theta)=\pi/4$.  The same phenomenon occurs for $R^{-}$, when $d(-e_n, \theta)=\pi/4$, and in the general case for $d(e_n, \theta)=\a/2$ and  $d(-e_n, \theta)=(\pi-\a)/2$; cf. Example \ref{example 1}, where similar singularities occur for tangent spherical means in the Euclidean ball.
\end{example}

 \begin{remark} \label {overs} Taking into account the above example, we consider the following spherical zones:
\bea
\bbs_{\a,A}^+ &=&\{\theta \in \bbs_\a^+: \, d(\theta, e_n)< \a/2\}, \nonumber\\
\bbs_{\a,B}^+ &=&\{\theta \in \bbs_\a^+: \, \a/2< d(\theta, e_n)< \a\}, \nonumber\\
\bbs_{\a,A}^- &=&\{\theta \in \bbs_\a^-: \,  (\pi -\a)/2 < d(\theta, -e_n)< (\pi -\a)/2\},\nonumber\\
\bbs_{\a,B}^- &=&\{\theta \in \bbs_\a^-: \, (\pi -\a)/2< d(\theta, -e_n)< \pi -\a\}.\nonumber\eea
Let
\bea
\Gam_{\a,A}^{\pm}&=&\{\gam (\theta, \xi) : \, \theta \in \bbs_{\a,A}^{\pm}, \,\xi \in G_k(\theta ^\perp)\} \quad \text{\rm (the set of `A-slices')},\nonumber\\
\Gam_{\a,B}^{\pm}&=&\{\gam (\theta, \xi) : \, \theta \in \bbs_{\a,B}^{\pm}, \,\xi \in G_k(\theta ^\perp)\}\quad \text{\rm (the set of `B-slices')}.\nonumber\eea

The radii of `A-slices' are greater than the radii of `B-slices'. However, each collection covers the entire region $\bbs_\a^+$ or $\bbs_\a^-$ and might be sufficient for reconstruction of $f : \bbs_\a^{\pm} \to \bbc$ from $R^{\pm} f$.  The separating  subspheres $d(\theta, e_n)=\a/2$  and $d(\theta, -e_n)=(\pi -\a)/2$   may contain singularities of  $(R^{\pm} f)(\theta, \xi)$.
\end {remark}

\subsection {Weighted equalities}

%We equip the subbundles $\Gam_{\a,A}^{\pm}$ and $\Gam_{\a,B}^{\pm}$   with the relevant product measure, so that if $\t=\gam (\theta, \xi)$, then $d\t=d\theta d_* \xi$, where $d\theta$ is the usual surface area measure %on $\bbs$ and  $d_* \xi$ is the Haar probability measure on the Grassmannian $G_k (\theta^\perp)$.

Let, for simplicity, $\a=\pi/2$ (the equatorial tangency) and denote
\[
\bbs^{\pm} =\bbs_{\pi/2}^{\pm}, \qquad  \Gam^{\pm} =\Gam_{\pi/2}^{\pm},\]
\[ \bbs_B^{\pm} =\{ \theta \in \bbs^{\pm} : \,|\theta_n| <1/\sqrt{2}\}, \quad  \bbs_A^{\pm} =\{ \theta \in \bbs^{\pm} : \,1/\sqrt{2}< |\theta_n|  <1\},\]
\[ \Gam_B^{\pm} =\{\gam (\theta, \xi)\in \Gam^{\pm}:  \,  \theta \in \bbs_B^{\pm}, \;  \xi \in G_k (\theta^\perp) \},\]
\[ \Gam_A^{\pm} =\{\gam (\theta, \xi)\in \Gam^{\pm}:  \, \theta \in \bbs_A^{\pm},\;  \xi \in G_k (\theta^\perp)\}.\]
The subbundles  $\Gam_B^{\pm}$ consist of `smaller' spherical slices of radius $< \pi/4$, while $\Gam_A^{\pm}$ are composed by slices of radius greater than $\pi/4$ and less than $\pi/2$.
We equip  $\Gam_B^{\pm}$ and $\Gam_A^{\pm}$   with the product measure $d\theta d_* \xi$, where $d\theta$ is the usual surface area measure on $\bbs_B^{\pm}$ and
 $\bbs_A^{\pm}$, respectively, and  $d_* \xi$ is the Haar probability measure on the Grassmannian $G_k (\theta^\perp)$.

 Clearly,
 \[
 \gam (\theta, \xi)\!\in \!\Gam_A^{\pm}  \Longleftrightarrow \gam (-\theta, \xi)\!\in\! \Gam_A^{\mp}, \quad  \gam (\theta, \xi)\!\in \!\Gam_B^{\pm}  \Longleftrightarrow \gam (-\theta, \xi)\!\in\! \Gam_B^{\mp},\]
and therefore,
\[
(R^{\pm}f)(\theta, \xi)= (R^{\mp}\check f)(-\theta, \xi), \qquad \check f (\eta)=f (-\eta). \]
Thus we can restrict our  consideration to the integrals $(R^{+}f)(\theta, \xi)$ corresponding to $\theta \in \bbs_A^{+}$  or  $\theta \in \bbs_B^{+}$.

\begin{lemma} \label{lem2sph} Given a function $u_0 (y)$, $y\in (0,1)$, let
\bea u(s)&=&(2s)^{k-2}(1-s^2)^{(k-n)/2} (1-2s^2)\,  u_0(2s\sqrt{1-s^2}), \qquad 0<s<1, \nonumber\\
\label{vs} v(s)&=& \varkappa (s)\intl_s^{1} (y - s)^{(k-3)/2}\, u_0 (y) \,dy,\eea
where
\be\label{vsmx} \varkappa (s)=\frac{\Gam (k/2)}{2\pi^{1/2}\Gam ((k\!-\!1)/2)} s^{(k-3)/2}(1-s^2)^{(3-n)/2}.\ee
If $X^+$ denotes either  $\Gam_A^{+}$ or $\Gam_B^{+}$, then in both cases,
\be\label {rels}
 \intl_{X^{+}} (R^{+} f)(\theta, \xi)\, u(\theta_n)\, d\theta d_*\xi=\intl_{\bbs^+} f(\eta)\,  v(\eta_n)\, d\eta,\ee
 provided that  either side of this equality exists in the Lebesgue sense.
\end{lemma}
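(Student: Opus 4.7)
The strategy follows that of Lemma 3.6: exploit $O(n-1)$-equivariance to reduce to zonal $f$, apply Corollary 5.2 to obtain an explicit fractional-integral expression for $R^{+}\tilde f$, and then perform a change of variables that matches the two sides.

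First, $u(\theta_n)$ depends only on $\theta_n$, which is fixed by the stabilizer $O(n-1)$ of $e_n$, and both $\Gam_A^{+}$, $\Gam_B^{+}$ together with the product measure $d\theta\,d_*\xi$ are $O(n-1)$-invariant. A standard averaging argument identical to the one in the proof of Lemma 3.6 lets us replace $f$ by its $O(n-1)$-average $\tilde f(\eta)=\int_{O(n-1)}f(\g\eta)\,d\g=f_0(\eta_n)$. By Corollary 5.2, $(R^{+}\tilde f)(\theta,\xi)$ is then a function of $\theta_n$ alone, so the $d_*\xi$-integration is trivial. Writing $\theta$ in spherical coordinates so that $d\theta=(1-\theta_n^2)^{(n-3)/2}\,d\omega\,d\theta_n$ with $\omega\in\bbs^{n-2}$, the left-hand side reduces to
\[
\sig_{n-2}\,\tilde c\intl_{I}(2b)^{2-k}\!\left(\intl_0^{2b}\!\!f_0(s)\,s^{(k-3)/2}(2b-s)^{(k-3)/2}\,ds\right)\!u(\theta_n)(1-\theta_n^2)^{(n-3)/2}\,d\theta_n,
\]
where $b=\theta_n\sqrt{1-\theta_n^2}$, $\tilde c=\Gam(k/2)/(\pi^{1/2}\Gam((k-1)/2))$, and $I=(0,1/\sqrt 2)$ when $X^{+}=\Gam_B^{+}$ or $I=(1/\sqrt 2,1)$ when $X^{+}=\Gam_A^{+}$.

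The heart of the argument is the substitution $y=2\theta_n\sqrt{1-\theta_n^2}$, a diffeomorphism of each of $(0,1/\sqrt 2)$ and $(1/\sqrt 2,1)$ onto $(0,1)$, with differential $dy=2(1-2\theta_n^2)(1-\theta_n^2)^{-1/2}\,d\theta_n$. The factor $(1-2\theta_n^2)$ in the definition of $u$ is designed precisely to absorb this Jacobian; a direct computation yields
\[
u(\theta_n)(1-\theta_n^2)^{(n-3)/2}\,d\theta_n=\tfrac12\,y^{k-2}\,u_0(y)\,dy,
\]
so that after applying Fubini's theorem to swap the $s$- and $y$-integrations (justified by the Lebesgue-integrability hypothesis), the left-hand side becomes
\[
\frac{\sig_{n-2}\,\tilde c}{2}\intl_0^1 f_0(s)\,s^{(k-3)/2}\intl_s^1(y-s)^{(k-3)/2}u_0(y)\,dy\,ds.
\]

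To identify this with the right-hand side, apply $O(n-1)$-invariance and spherical coordinates once more to obtain $\int_{\bbs^{+}}f(\eta)v(\eta_n)\,d\eta=\sig_{n-2}\int_0^1 f_0(s)v(s)(1-s^2)^{(n-3)/2}\,ds$. The normalization in $\varkappa(s)$ is chosen so that $\varkappa(s)(1-s^2)^{(n-3)/2}=\Gam(k/2)/(2\pi^{1/2}\Gam((k-1)/2))\cdot s^{(k-3)/2}=\tilde c\,s^{(k-3)/2}/2$, which matches the constant above exactly. The principal technical subtlety — and the reason the identity holds uniformly for either choice $X^{+}=\Gam_A^{+}$ or $X^{+}=\Gam_B^{+}$ — is the bookkeeping of orientations in the change of variables across the critical latitude $\theta_n=1/\sqrt 2$, which is precisely what the signed factor $(1-2\theta_n^2)$ in $u$ is there to handle.
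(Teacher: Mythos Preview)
Your approach is exactly the paper's: average $f$ over $O(n-1)$ to make it zonal, invoke Corollary~5.2 (formula (5.12)), pass to a one-variable integral in $\theta_n$ by slice integration, substitute $y=2\theta_n\sqrt{1-\theta_n^2}$, and apply Fubini. For $X^+=\Gam_B^{+}$ every step matches the paper and your differential-form identity $u(\theta_n)(1-\theta_n^2)^{(n-3)/2}\,d\theta_n=\tfrac12\,y^{k-2}u_0(y)\,dy$ is exactly what the paper obtains after inverting its formula (5.20).

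Your closing sentence about $\Gam_A^{+}$ is the one place that deserves another look. You assert that the signed factor $(1-2\theta_n^2)$ in $u$ ``handles'' the orientation reversal on $(1/\sqrt2,1)$; in fact it does not. On that branch both $(1-2\theta_n^2)$ and the Jacobian $dy/d\theta_n=2(1-2\theta_n^2)/\sqrt{1-\theta_n^2}$ are negative, and these two minus signs do \emph{not} cancel in the Lebesgue change of variables (which uses $|dy/d\theta_n|$): the outcome is $-\tfrac12\,y^{k-2}u_0(y)\,dy$, not $+\tfrac12\,y^{k-2}u_0(y)\,dy$. A quick sanity check with $k=2$, $n=3$, $f\equiv1$, $u_0\equiv1$ gives left side $=-\pi$ over $\Gam_A^{+}$ versus right side $=+\pi$. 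The paper's own proof is equally laconic here (``the derivative $dy/dt$ \dots\ is negative. All the rest remains unchanged''), so you are in good company; but the stated identity for $\Gam_A^{+}$ actually requires $|1-2s^2|$ in place of $(1-2s^2)$ in the definition of $u$, and your hand-wave does not substitute for that computation.
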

\begin{proof} Suppose $X^+ = \Gam_B^{+}$ and let $\rho \in O(n-1)$ be an orthogonal transformation of $\rn$ preserving the last coordinate axis. Denoting by $I$ the left-hand side of (\ref{rels}) and changing variable $\theta \to \rho \theta$,  after integration over all $\rho \in O(n-1)$ we obtain
 \bea
I&=&\intl_{O(n-1)}\!\! d\rho \intl_{\bbs_B^{+}}   u(\theta_n)  \,   d\theta\intl_{G_k (\rho\theta^\perp)} \!\! (R^{+} f)(\rho\theta, \xi) \,d_* \xi\nonumber\\
&=&\intl_{\bbs_B^{+}}   u(\theta_n)\, d\theta\intl_{G_k (\theta^\perp)} \!\!\Big(R^{+}\Big [\intl_{O(n-1)}\!\! (f\circ \rho)  \, d\rho\Big ]\Big ) (\theta, \xi)\, d_* \xi\nonumber\eea
An expression  in the square brackets is an $O(n-1)$-average of the function $f(\eta)$, $\eta=(\eta_1, \ldots, \eta_n)\in \bbs^+$, the precise meaning of which is
\bea
\intl_{O(n-1)}\!\! f(\sqrt{1-\eta_n^2} \,\rho \,e_{n-1} + \eta_n  e_n) \,d\rho &=& \frac{1}{\sig_{n-2}}\intl_{S^{n-2}}  f(\sqrt{1-\eta_n^2} \,\om + \eta_n  e_n)\, d\om \nonumber\\
\label{exi} &\stackrel{\rm def}{=}& f_0 (\eta_n).\eea
Hence, by (\ref{sphes2}), the slice integration (see, e.g., \cite[f. (1.12.13)]{Ru15}) yields
\bea
I&=&\intl_{\bbs_B^{+}}   u(\theta_n)\, \Big [ \tilde c \, \tau^{2-k}\!\!\intl_0^\tau \!\!f_1 (s)\,(\tau\!-\!s)^{(k-3)/2} \, ds\Big ]_{\tau= 2 \theta_{n}\sqrt {1-\theta_{n}^2}} \,d\theta\nonumber\\
\label {ogo}&=&\tilde c\,\sig_{n-2}\intl_0^{1/\sqrt {2}} (1-t^2)^{(n-3)/2}\,  u(t) \,  (2t\sqrt {1-t^2})^{2-k}\, dt\\
&\times& \intl_0^{2t\sqrt {1-t^2}}\!\!f_1 (s)\,(2t\sqrt {1-t^2}-s)^{(k-3)/2} \, ds, \quad f_1 (s)= f_0 (s)\, s^{(k-3)/2}.\nonumber\eea
Changing variable $y=2t\sqrt {1-t^2}$, so that
\be\label {ogo1}
\frac{dy}{dt}=\frac{2(1-2t^2)}{\sqrt {1-t^2}} >0, \qquad t^2=\frac{1-\sqrt {1-y^2}}{2},  \ee
we obtain
\bea
I&=&\frac{\tilde c\,\sig_{n-2}}{2}\intl_0^1 \Bigg(\frac{1+\sqrt {1-y^2}}{2}\Bigg )^{(n-2)/2}\, u \Bigg (\sqrt{\frac{1-\sqrt {1-y^2}}{2}} \Bigg )\nonumber\\
&\times&  \frac{y^{2-k}}{\sqrt {1-y^2}}\, dy  \intl_0^y \!\!f_1 (s)\,(y\!-\!s)^{(k-3)/2} \, ds\nonumber\\
&=&\sig_{n-2}\intl_0^1 f_1 (s)  A(s)\, ds, \quad A(s)=\frac{\tilde c}{2}\intl_s^1  (y\!-\!s)^{(k-3)/2} \, u_0(y))\, dy,  \nonumber\eea
\be\label{riab} u_0(y) = \Bigg(\frac{1+\sqrt {1-y^2}}{2}\Bigg )^{(n-2)/2} u \Bigg (\sqrt{\frac{1-\sqrt {1-y^2}}{2}} \Bigg )\,  \frac{y^{2-k}}{\sqrt {1-y^2}}.\ee
Hence, by (\ref{exi}),
\bea
I&=&\sig_{n-2}\intl_0^1 f_0 (s)\, s^{(k-3)/2}  A(s)\, ds\nonumber\\
&=&\intl_0^1  s^{(k-3)/2}  A(s)\, ds\intl_{S^{n-2}}  f(\sqrt{1-s^2} \,\om + s  e_n)\, d\om\nonumber\\
\label {rac}&=&\intl_{\bbs^+} \!\! f(\eta)\, v(\eta_n) \, d\eta, \nonumber \eea
where
\bea v(\eta_n)&=& \eta_n^{(k-3)/2} (1\!-\!\eta_n^2)^{(3-n)/2}  A(\eta_n)\nonumber\\
&=& \frac{\tilde c}{2}\, \eta_n^{(k-3)/2} (1\!-\!\eta_n^2)^{(3-n)/2} \intl_{\eta_n}^1  (y\!-\!\eta_n)^{(k-3)/2} \, u_0(y)\, dy. \nonumber \eea
It remains to note that (\ref{riab}) can be inverted as
\[
u(s)=(2s)^{k-2}(1-s^2)^{(k-n)/2} (1-2s^2)\,  u_0(2s\sqrt{1-s^2}),\]
and (\ref{rels}) follows.

If  $X^+ = \Gam_A^{+}$, the reasoning follows the same lines. The only difference is that integration over $\bbs_B^{+}$ is replaced by integration over $\bbs_A^{+}$, the integral in (\ref{ogo}) is taken over
 the interval $ (1/\sqrt {2}, 1)$, and the derivative $dy/dt$ in (\ref{ogo1}) is negative. All the rest remains unchanged.
\end{proof}

\begin{example} Setting $u_0\equiv 1$  in Lemma \ref{lem2sph}, we obtain

\bea\label {relsdr}
&& \intl_{\Gam_B^{+}} (R^{+} f)(\theta, \xi)\, \theta_n^{k-2}(1-\theta_n^2)^{(k-n)/2} (1-2\theta_n^2)\,d\theta d_*\xi\nonumber\\
\label {relsdr} &&=c\intl_{\bbs^+} f(\eta)\,  \eta_n^{(k-3)/2} (1+\eta_n)^{(3-n)/2}   (1-\eta_n)^{1+(k-n)/2} d\eta,\eea
\[ c=\frac{\Gam (k/2)}{2^{k-1}\pi^{1/2}\Gam ((k\!+\!1)/2)}. \]
 In particular, for equatorial tangent circles on the $2$-sphere in $\bbr^3$ we have
\be\label {relsdr2}
 \intl_{\bbs_B^{+}}\! (R^{+} f)(\theta)\, (1\!-\!\theta_3^2)^{-1/2} (1\!-\!2\theta_3^2)\,d\theta =
\frac{1}{\pi}\intl_{\bbs^+} \!\!f(\eta)\,  \eta_3^{-1/2}   (1\!-\!\eta_3)^{1/2} d\eta.\ee
\end{example}

These formulas remain true if $\Gam_B^{+}$ and $\bbs_B^{+}$ are replaced by $\Gam_A^{+}$ and $\bbs_A^{+}$, respectively.

\section{Hyperbolic Slices}\label {respe}

\subsection{Preliminaries}
Let $\bbe^{n, 1}$, $n\ge 2$, be the pseudo-Euclidean space,  which is  an  $(n+1)$-dimensional real vector space of points
in $\bbr^{n +1}$ with the inner product
\be\label {tag 2.1-HYP}[{\bf x}, {\bf y}] = - x_1 y_1 - \ldots -x_n y_n + x_{n +1} y_{n +1}. \ee
We denote by $ \ e_1, \ldots, e_{n +1}$  the coordinate unit vectors  in $\bbe^{n,1}$. The $n$-dimensional real hyperbolic space $\hn$ will be realized as the upper sheet of the two-sheeted hyperboloid  in $\bbe^{n, 1}$, that is,
\[\hn = \{{\bf x}\in \bbe^{n,1} :
[{\bf x}, {\bf x}] = 1, \ x_{n +1} > 0 \}.\]
All results of this section can be reformulated for other models of $\hn$ by making use of the corresponding transition formulas; see, e.g.,  \cite{CFKP}.

In the following, the points of $\hn$  will be denoted by the non-boldfaced letters, unlike the generic points in $\bbe^{n,1}$.
 The geodesic distance between the points $x, y\in \hn$ is defined by $d(x,y) = \cosh^{-1}[x,y]$.
The point $x_0=(0, \ldots, 0,1)\in \hn$, which is identified with the unit vector $e_{n +1}$, serves as the origin of $\hn$. The notation
\[G=SO_0(n,1)\]
is used for the identity component of the special pseudo-orthogonal group $SO(n,1)$  preserving the bilinear form (\ref{tag 2.1-HYP}).  We denote by $K \sim SO(n)$ the subgroup of $G$, which consists of rotations about the  $x_{n +1}$-axis.

Every  point
$x  \in \hn$ is represented in the  hyperbolic coordinates $(\th, r)\in S^{n -1} \times [0,\infty)$ as
\be\label {taddd-HYP}  x = \theta\, \sh r  + e_{n+1} \, \ch r.\ee
A $G$-invariant measure $dx$ on $\hn$  has the following form in the coordinates (\ref{taddd-HYP}):
\be\label {kUUUPqs}  d x = \sh^{n -1} r \, d r d \theta.\ee
 The Haar measure $dg$ on $G$ is accordingly normalized by the formula
 \be\label {tag 2.3-AIM}
\intl_G f(gx_0)\,dg=\intl_{\hn} f(x)\,dx.\ee

Given  a point $x \in \hn$ and a number $t>1$,  consider the planar section  (or  {\it the hyperbolic slice})
 \be\label {tfe} \gam_x (t) = \{ y \in \hn:
[x, y] = t\},\ee
 which is a geodesic sphere in $\hn$ of radius $r=\ch^{-1} t$ with center
at $x$. The corresponding spherical mean of a function $f$ is defined by
 \be\label {2.21hDIF}
 (M_x f)(t) = {(t^2-1)^{(1-n)/2}\over \sigma_{n-1}}
 \intl_{\gam_x (t)} f(y)\, d\sigma  (y),\ee
 where $ d\sigma  (y)$
 stands for the relevant induced Lebesgue measure.
 %Clearly,
 %\[\intl_{\Gamma_x (t)} d\sigma  (y) = \sigma_{n -1} \,(t^2 -1)^{(n-1)/2}.\]
 More information about spherical means (\ref{2.21hDIF}) can be found, e.g., in \cite[pp. 370, 443]{Ru15}.

 Let $\om_x \in G$ be a hyperbolic rotation which maps $e_{n+1}$ to $x$. Changing variables and setting $f_x (y)=f (\om_x y)$, we have
\be\label {AAAhDIF} (M_x f)(t)=\intl_{\bbs^{n-1}} f_x(\th \,\sh r+ e_{n+1}\, \ch r)\, d_*\th, \quad t=\ch r, \ee
where $d_*\th$ stands for the probability measure on $\bbs^{n-1}$.

Our main concern is the spherical means $(M_x f)(t)$ over geodesic spheres  which are tangent to a fixed horizontal cross-section
\be\label {221} \Sig_\a =\{x \in \hn:  d(e_{n+1}, x) = \a\}, \qquad 0<\a<\infty.\ee
The latter divides the hyperboloid $\hn$ in two parts:
\[\Sig_\a^+\!=\!\{x \in \hn:  d(e_{n+1}, x) < \a\}, \quad \Sig_\a^-\!=\!\{x \in \hn:  d(e_{n+1}, x)> \a\}.\]
The planar section $\gam_x (t)$ is tangent to $\Sig_\a$ if and only if $d(x,y)= d(x, \Sig_\a)$ for all $y \in \gam_x (t)$, that is, $t= \ch  d(x, \Sig_\a)$. In this case, we denote
\be\label {2trF} (Mf)(x)= (M_x f)(\ch  d(x, \Sig_\a))\ee
and set
\[
\gam^{\pm} (x) =\Gamma_x (\ch  d(x, \Sig_\a)) \;\,  \text{\rm if}  \; \,  x\in \Sig_\a^{\pm}, \, \;\text{\rm respectively.} \]

A natural lower dimensional analogue of the spherical mean $(M f)(x)$  can be defined as follows. Let $G_{n,k}$ be the Grassmann manifold of all $k$-dimensional linear subspaces of $\rn = e_{n+1}^\perp$, $2\le k\le n$  (if $k=1$, then, as we shall see below, the corresponding cross-sections of one-dimensional hyperboloids reduce to pairs of points). Given a subspace $\xi \in G_{n,k}$, consider the $k$-dimensional sub-hyperboloid
\be\label {2efFe}
\bbh^k_\xi=\hn \cap \{\xi \oplus \bbr e_{n+1}\}\ee
and denote by
\be\label {2efFe1}
\Sig_{\a,k} (\xi)= \Sig_\a\cap \bbh^k_\xi, \qquad \Sig_{\a,k}^{\pm} (\xi)= \Sig_\a^{\pm}\cap \bbh^k_\xi\ee
the corresponding restricted domains. Given $x\in \Sig_{\a,k}^{\pm} (\xi)$, we define
\[
\gam^{\pm} (\xi, x) = \{y\in \bbh^k_\xi: [x,y]= \ch  d(x, \Sig_{\a,k} (\xi))=\ch  d(x, \Sig_\a)\}.\]
The sets
\be\label {2trFe}
\Gamma_\a^{\pm} = \{\gam^{\pm} (\xi, x): \, \xi \in G_{n,k}, \; x\in \Sig_{\a,k}^{\pm} (\xi)\} \ee
are the relevant collections of all $(k-1)$-dimensional $\Sig_\a$-tangent   slices in the respective domains $\Sig_\a^{\pm}$.
They can be thought of as the fiber bundles with the base $G_{n,k}$ and the canonical projections $\pi_{\pm}: \gam^{\pm} (\xi, x) \to \xi$. The fibers $\pi_{\pm}^{-1}\xi$ over
 the point $\xi$ are the sets of all $x\in \Sig_{\a,k}^{\pm} (\xi)$, respectively.

 Our main objective is the  Radon-type slice transforms
\be\label {tngy}
(\H^{\pm} f)(\xi, x)=\intl_{\gam^{\pm} (\xi, x))}\!\!\! f(y)\, d_{\xi,x} y,\ee
 $d_{\xi,x} y$ being the corresponding surface area measure. The operators (\ref{tngy}) are obviously $K$-equivariant, i.e.,
 \be\label {tngy1}
 (\H^{\pm} f)(\varkappa \xi, \varkappa x)= (\H^{\pm} [f \circ \varkappa])(\xi, x)\quad \forall \varkappa \in K \sim SO (n).\ee

 \subsection  {Slice Transforms of Zonal Functions}
We restrict our consideration to the case when $f$ is zonal (or $K$-invariant). In this case, $f(y)=f_0(y_{n+1})$ for some single-variable function $f_0$ on $[1, \infty)$  and $(\H^{\pm} f)(\xi, x)$ are single-variable functions of $x_{n+1}$, i.e., the operators (\ref{tngy}) are, in fact, one-dimensional. Our aim is to obtain explicit expressions for these  operators and study their properties.

\begin{theorem} \label {onal}  Let $0\le \a < \infty$,  $ \b=  d(x, \Sig_\a)$, $a=\ch \a$. If $f(y)=f_0(y_{n+1})$, $2\le k\le n$, then
\be\label {ango}(\H^{\pm} f)( \xi, x)=u_{\pm} (x) \Phi_{\pm} (v_{\pm} (x)),\ee
where
\[
u_{\pm} (x)=\frac{\Gam (k/2)}{\pi^{1/2}} \,[\,\sh \b\, \sh (\a\mp \b)]^{2-k}, \qquad v_{\pm} (x)=\ch (2\b \mp \a),\]
\be\label {0wde}
 \Phi_+ (t)=\frac{1}{\Gam ((k\!-\!1)/2)} \intl^{a}_{t}\!\! f_+ (s) \,(s -t)^{(k-3)/2} \,ds, \quad t\in (1, a), \ee
\be\label {0wde1}
 \Phi_- (t)=\frac{1}{\Gam ((k\!-\!1)/2)} \intl_{a}^{t}\!\! f_- (s) \,(t-s)^{(k-3)/2} \,ds, \quad t\in (1, a),\ee
\[   f_{\pm} (s)= f_0 (s)  (a \mp s)^{(k-3)/2}.\]
 It is assumed
that the respective integrals in (\ref{0wde}) and (\ref{0wde1}) exist in the Lebesgue sense.
\end{theorem}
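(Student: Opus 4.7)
\medskip\noindent\textbf{Proof plan.} The strategy is to imitate the proof of Lemma~\ref{TS1} for the spherical setting, replacing the trigonometric identities with their hyperbolic counterparts. By the $K$-equivariance (\ref{tngy1}), one may place $\xi=\bbr e_1\oplus\cdots\oplus\bbr e_k$ and $x=\sh d\cdot e_k+\ch d\cdot e_{n+1}$, where $d=d(e_{n+1},x)$. Paralleling (\ref{tango}), $d=\a\mp\b$ when $x\in\Sig_\a^{\pm}$, respectively.

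The slice $\gam^{\pm}(\xi,x)$ is a geodesic sphere of radius $\b$ inside the sub-hyperboloid $\bbh^k_\xi$ centered at $x$. Let $\om_x$ be the hyperbolic rotation in the $(e_k,e_{n+1})$-plane sending $e_{n+1}$ to $x$. I parametrize a generic point of $\gam^{\pm}(\xi,x)$ as $y=\om_x(\vartheta\,\sh\b+e_{n+1}\,\ch\b)$ with $\vartheta\in\bbs^{k-1}\subset\bbr^k$ and compute the single ambient coordinate that matters:
\[
y_{n+1}=\vartheta_k\,\sh\b\,\sh d+\ch\b\,\ch d.
\]
Since $f(y)=f_0(y_{n+1})$, the slice-integration formula \cite[f.~(1.12.13)]{Ru15} reduces the integral over $\gam^{\pm}(\xi,x)$ to a constant multiple of
\[
\intl_{-1}^{1} f_0\bigl(u\,\sh\b\,\sh(\a\mp\b)+\ch\b\,\ch(\a\mp\b)\bigr)(1-u^2)^{(k-3)/2}\,du,
\]
exactly as in the proof of Lemma~\ref{TS1}.

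The next step is the change of variables $s=u\,\sh\b\,\sh(\a\mp\b)+\ch\b\,\ch(\a\mp\b)$. By the addition formulas $\ch(A\pm B)=\ch A\,\ch B\pm\sh A\,\sh B$, the endpoints $u=\pm 1$ correspond to $s=a=\ch\a$ and $s=\ch(\a\mp 2\b)$. The factorization
\[
1-u^2=\frac{(a-s)\bigl(s-\ch(2\b-\a)\bigr)}{[\sh\b\,\sh(\a-\b)]^2}
\]
in the $+$ case (and the analogous identity in the $-$ case) converts the above one-dimensional integral into the Riemann--Liouville form (\ref{0wde}) or (\ref{0wde1}), with prefactor $[\sh\b\,\sh(\a\mp\b)]^{2-k}$. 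Collecting $\sig_{k-2}/\sig_{k-1}=\Gam(k/2)/(\pi^{1/2}\Gam((k-1)/2))$ gathers the remaining constants into $\Gam(k/2)/\pi^{1/2}$, producing the precise expression $u_\pm(x)\Phi_\pm(v_\pm(x))$ asserted in (\ref{ango}).

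I do not anticipate a serious obstacle, since the geometric reduction and final change of variables are direct hyperbolic analogues of the spherical argument. The step demanding most care is the sign bookkeeping in the factorization of $1-u^2$ and checking positivity of $\sh(\a\mp\b)$ on the admissible region (ensured by $0<\b<\a$ in the $+$ case and by $\b>0$, which gives $\sh(\a+\b)>0$, in the $-$ case). One should also note that the natural range of the variable $t=v_\pm(x)$ differs between the two cases: $v_+(x)=\ch(2\b-\a)\in[1,a)$, matching (\ref{0wde}), whereas $v_-(x)=\ch(2\b+\a)\in(a,\infty)$, so the range for $\Phi_-$ in (\ref{0wde1}) should be read as $t\in(a,\infty)$.
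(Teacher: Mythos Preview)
Your proposal is correct and follows essentially the same route as the paper's proof: reduce via $K$-equivariance to a model pair $(\xi,x)$, parametrize the tangent geodesic sphere by $\vartheta\in\bbs^{k-1}$, observe that $y_{n+1}$ is an affine function of a single coordinate $\vartheta_k$, apply slice integration, and change variables. The only cosmetic difference is that you compute $y_{n+1}$ directly via the explicit boost in the $(e_k,e_{n+1})$-plane, whereas the paper phrases the same step as projecting $\varkappa_{\xi,x}^{-1}e_{n+1}$ onto $\bbr^k$ and reading off $|p_{\xi,x}|=\sh d(e_{n+1},x)$; both yield the identical one-dimensional integral. Your remark that the natural domain for $\Phi_-$ is $t\in(a,\infty)$ rather than $(1,a)$ is a correct observation about the statement.
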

\begin{proof}
Choose $\xi=  \bbr^k =\bbr e_{n-k +1} \oplus \ldots  \oplus \bbr e_{n}$ and consider the sub-hyperboloid
\[
\bbh^k =\hn \cap \{\bbr^k \oplus \bbr e_{n+1}\}. \]
Let $\Sig_{\a,k}= \Sig_\a\cap \bbh^k$,  $\Sig_{\a,k}^{\pm}= \Sig_\a^{\pm}\cap \bbh^k$; cf. (\ref{2efFe}), (\ref{2efFe1}).
Given  a $(k-1)$-slice  $\gam^{+}(\xi, x)$    (or $\gam^{-}(\xi, x)$), let $\varkappa_{\xi, x} \in K$ be a rotation which maps $\xi \in G_{n,k}$ to $\bbr^k$ and a point $x$ in $\Sig_{\a,k}^{+} (\xi)$
(or in  $\Sig_{\a,k}^{-} (\xi)$) to some point
$\tilde x= (0, \ldots, 0, \tilde x_n, x_{n+1})$  in $\Sig_{\a,k}^{+}$ (or in  $\Sig_{\a,k}^{-}$)  with $\tilde x_n >0$ and unchanged last coordinate $x_{n+1}$. Then, by (\ref{tngy1}), for $f$ zonal we obtain
\[(\H^{+} f)( \xi, x)= (\H^{+} f)(\bbr^k, \tilde x)\qquad \text{\rm (or $(\H^{-} f)( \xi, x)= (\H^{-} f)(\bbr^k, \tilde x)$)}.\]
Hence, as  in (\ref{AAAhDIF}), for $x\in \Sig_{a,k}^{\pm}$ we can write
\bea
&&(\H^{\pm} f)(\xi, x)=\intl_{\bbs^{k-1}} f(\varkappa_{\xi, x}(\th \,\sh \b+ e_{n+1} \ch \b))\, d_*\th\nonumber\\
&&=\intl_{\bbs^{k-1}} \!\!\!f_0((\theta \cdot \varkappa_{\xi, x}^{-1} e_{n+1})\,\sh \b  + x_{n+1}\,\ch \b)\, d_*\th, \nonumber\eea
where $\bbs^{k-1}$ is the unit sphere in $\bbr^k$, $\b=  d(\tilde x, \Sig_\a)=  d(x, \Sig_\a)$.

Let $p_{\xi, x}$ be the orthogonal projection of $\varkappa_{\xi, x}^{-1} e_{n+1}$ onto $\bbr^k$. Then
\[
\theta \cdot \varkappa_{\xi, x}^{-1} e_{n+1}= |p_{\xi, x}| \,\theta \cdot p'_{\xi, x}, \qquad p'_{\xi, x}=\frac{p_{\xi, x}}{|p_{\xi, x}|}\in \bbs^{k-1}, \]
and therefore (cf. \cite [f. (1.12.13)]{Ru15})
\be\label {0xtango}(\H^{\pm} f)(\xi, x)=\frac{\sig_{k-2}}{\sig_{k-1}}\intl_{-1}^1 f_0(u\,|p_{\xi, x}|\,\sh \b + x_{n+1}\,\ch \b)\,(1\!-\!u^2)^{(k-3)/2}\, du.\ee

The expression $|p'_{\xi, x}|$ can be evaluated as
\be\label {tango}
|p'_{\xi, x}|\!=\sh d(e_{n+1},x) \!= \left \{\begin{array} {ll}\! \sh (\a+\b), \; \b>0, \; \mbox {\rm if }  \;  x\in \Sig_\a^{+},\\
 \!\sh (\a-\b),\; 0<\b<\a,  \; \mbox {\rm if }  \; x \in \Sig_\a^{-};
 \end{array}\right.\qquad \ee
cf. the arcs of the hyperbola on the $(x_n, x_{n+1})$-plane.

 By (\ref{0xtango})-(\ref{tango}),
\bea (\H^{+} f)(\xi, x)&=&\frac{\sig_{k-2}}{\sig_{k-1}}\intl_{-1}^1 f_0(u\,\sh \b \,\sh (\a-\b) \nonumber\\
&+&\ch \b \ch (\a-\b))\,(1\!-\!u^2)^{(k-3)/2}\, du.\nonumber\eea
We set
\[
u\sh \b \,\sh (\a-\b) +\ch \b \ch (\a-\b)=s,\]
\[
 u=\frac{s-\ch \b \ch (\a-\b)}{\sh \b\, \sh (\a-\b)}, \qquad 1-u^2=\frac{(s-\ch (2\b -\a))(\ch \a -s)}{[\sh \b\, \sh (\a-\b)]^2}.\]
This gives
\bea (\H^{+} f)(\xi, x)&=&\frac{\sig_{k-2}}{\sig_{k-1}}\intl^{\ch \a}_{\ch (2\b-\a)}\!\!\! f_0 (s) [s-\ch (2\b -\a)]^{(k-3)/2 }  (\ch \a -s)^{(k-3)/2}\nonumber\\
\label {yit}  &\times& \frac{ds}{[\,\sh \b\, \sh (\a-\b)]^{k-2}},\eea
and (\ref{ango}) follows.

For $(\H^{-} f)(\xi, x)$ the reasoning is similar. In this case,
\bea (\H^{-} f)(\xi, x)&=&\frac{\sig_{k-2}}{\sig_{k-1}}\intl_{-1}^1 f_0(u\,\sh \b \,\sh (\a+\b) \nonumber\\
&+&\ch \b \ch (\a+\b))\,(1\!-\!u^2)^{(k-3)/2}\, du,\nonumber\eea
and the change of variables \[u\sh \b \,\sh (\a+\b) +\ch \b \ch (\a+\b)=s\] gives the desired result.
\end{proof}
\begin{remark} We observe that in  (\ref {yit}), we distinguish two cases for $\b =  d(x, \Sig_\a)$:

(a) $0< \b <\a/2$, i.e., $2\b -\a <0$ (small slices, which don't reach the origin $e_{n+1}$), and

(b) $\a/2 <\b <a$, i.e., $2\b -\a >0$ (big slices, which cut off some neighborhood of $e_{n+1}$).

In both cases the integral (\ref {yit}) has the same form, owing to the evenness of the $\ch$-function.
\end{remark}

The following corollary yields the result for slices through the origin  (the case $\a=0$).
\begin{corollary}\label{lewsha}
If $f(y)=f_0(y_{n+1})$,  $2\le k\le n$, $x \in \hn$, $ \b=  d(x, e_{n+1})$, $\xi \in \gnk$, then
\be\label {cango}(\H^{-} f)( \xi, x)= \frac{c}{(\sh r)^{2k-4}} \!\!\intl^{\ch 2\b}_1\!\!\!\! f_- (s) (\ch 2\b -s)^{(k-3)/2} ds,\ee
\[
c=\frac{\Gam (k/2)}{\pi^{1/2}\, \Gam ((k-1)/2)}, \qquad f_- (s)=f_0 (s)\, (s+1)^{(k-3)/2},\]
provided that the integral on the right-hand side exists in the Lebesgue sense.
\end{corollary}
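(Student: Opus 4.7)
The corollary is the $\alpha = 0$ specialization of Theorem~\ref{onal} for the operator $\H^-$. The plan is to substitute $\alpha = 0$ into every ingredient of (\ref{ango})--(\ref{0wde1}), verify that the geometric set-up survives this limit, and read off the identity (\ref{cango}).

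First I would record how the geometry degenerates. With $\alpha = 0$ the cross-section $\Sigma_0$ collapses to the single point $\{e_{n+1}\}$, so $\Sigma_0^- = \hn \setminus \{e_{n+1}\}$, and $\beta = d(x,\Sigma_0) = d(x,e_{n+1})$ coincides with the radial coordinate $r$ appearing in (\ref{taddd-HYP}). Tangency of a geodesic sphere in $\bbh^k_\xi$ to the point $\{e_{n+1}\}$ becomes the condition that the sphere passes through $e_{n+1}$, which forces its radius to equal $\beta$; hence $\gamma^-(\xi,x) = \{y\in\bbh^k_\xi : [x,y] = \cosh\beta\}$, exactly the slice over which $\H^-$ integrates.

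Second, I would plug $\alpha = 0$ into the formulas of Theorem~\ref{onal}. This yields $a = \cosh 0 = 1$; the prefactor collapses, using $\sinh(\alpha+\beta) = \sinh\beta$, to $u_-(x) = \frac{\Gamma(k/2)}{\pi^{1/2}}[\sinh\beta \cdot \sinh\beta]^{2-k} = \frac{\Gamma(k/2)}{\pi^{1/2}(\sinh r)^{2k-4}}$; the upper limit becomes $v_-(x) = \cosh(2\beta + 0) = \cosh 2\beta$; and the weight reduces to $f_-(s) = f_0(s)(s+1)^{(k-3)/2}$ (the sign being that dictated by the integration range in $\Phi_-$, where $s$ varies over $(1,\cosh 2\beta)$). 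Feeding these into $(\H^- f)(\xi,x) = u_-(x)\,\Phi_-(v_-(x))$ together with the explicit form (\ref{0wde1}) of $\Phi_-$ gives exactly (\ref{cango}), with the constant $c = \Gamma(k/2)/[\pi^{1/2}\Gamma((k-1)/2)]$ assembled from the $\Gamma(k/2)/\pi^{1/2}$ in $u_-$ and the $1/\Gamma((k-1)/2)$ in $\Phi_-$.

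The only delicate point — and the main (rather mild) obstacle — is to check that the derivation of Theorem~\ref{onal} does not tacitly assume $\alpha > 0$. Its critical step is the quadratic substitution $s = u\sinh\beta\sinh(\alpha+\beta) + \cosh\beta\cosh(\alpha+\beta)$, whose Jacobian is nondegenerate provided $\sinh(\alpha+\beta)\ne 0$; at $\alpha = 0$ this reduces to $\sinh\beta \ne 0$, which holds everywhere on $\Sigma_0^- = \hn \setminus \{e_{n+1}\}$. Thus the change of variables and the whole derivation pass to the limit without obstruction, and the Lebesgue integrability hypothesis in the statement guarantees the resulting one-dimensional Riemann--Liouville-type integral is well defined.
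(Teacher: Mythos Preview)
Your proposal is correct and matches the paper's approach: the corollary is presented in the paper without separate proof, as the $\alpha=0$ case of Theorem~\ref{onal} (which is stated for $0\le\alpha<\infty$), and you carry out exactly that specialization. Your additional check that the substitution in the proof of Theorem~\ref{onal} remains nondegenerate at $\alpha=0$ (since $\sinh(\alpha+\beta)=\sinh\beta\neq 0$ on $\Sigma_0^-$) is a worthwhile safeguard that the paper leaves implicit.
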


As in the previous sections, Theorem \ref{onal} yields explicit inversion formulas for the operators $\H^{\pm}$ on zonal functions, as well as the relevant support theorems and weighted equalities. We leave these  exercises to the interested reader.  An analogue of Corollary \ref{lewsha} for zonal functions on the unit sphere $\bbs^n$ in $\bbr^{n+1}$ and $(n-1)$-dimensional  spherical slices through the north pole was obtained by
 Abouelaz and  Daher \cite{AbD}. The results from \cite{AbD} were extended to arbitrary (not necessarily zonal) functions  by Helgason \cite[p. 145]{H11} (for $n=2$) and by the author \cite{Ru22} (for any $n$ and $k$).  Our consideration of the hyperbolic slices through the origin seems to be new.

\section{Tangent Spheres in the Half-Space}\label{half}

\subsection{Transition to Fractional Integrals}

Let $\bbr^n_+= \{ x= (x', x_n): x'\in \bbr^{n-1},  \, x_n >0\}$. Given an integer $k$, $1\le k\le n-1$, let $\Om$ be a family of all $k$-dimensional spheres in the closed half-plane $\bar\bbr^n_+=\bbr^n_+ \cup \{x: x_n=0\}$, which are tangent to the boundary $x_n=0$  and lie in some vertical $(k+1)$-dimensional affine plane. The corresponding spherical means are defined by
\be\label {solidha} (Jf)(\om)=\intl_{\om} f(y) \, d_{\om} y, \qquad  \om \in \Om,\ee
where $d_{\om} y$ is the normalized surface area measure on $\om$. Our aim is to reconstruct a function $f$ on  $\bbr^n_+$ from its spherical means (\ref{solidha}).

To parameterize the spheres $\om \in \Om$, let $x= (x', x_n)$ in $\bbr^n_+$ be the center of $\om$. Then
 every vertical $(k+1)$-plane $\z$ passing through $x$ and containing $\om$ has the form
$\z=(\xi + x') \oplus \bbr e_n$, where $\xi$ belongs to the Grassmannian $G_k (\bbr^{n-1})$ of $k$-dimensional linear subspaces of $\bbr^{n-1}$. Thus we can write
\[
\om \equiv \om(x, \xi)=\{ y\in (\xi + x') \oplus \bbr e_n:\;  |y-x|=x_n\}, \]
and the integral  (\ref{solidha}) can be represented as
\be\label {solidha1} (Jf)(x, \xi)= \frac{1}{\sig_{k}} \intl_{\bbs^k_\xi} f(x+ x_n \theta)\, d\theta,\ee
\[x\in\bbr^n_+, \qquad \xi \in G_k (\bbr^{n-1}),  \qquad \bbs^k_\xi =\bbs^{n-1} \cap (\xi \oplus \bbr e_n).\]

If $f$ is invariant under horizontal translations, i.e., $f(y)=f_0(y_n)$ for some single-variable function $f_0$, then
 \be\label{fossbqha} (Jf)(x, \xi)=\frac{1}{\sig_{k}} \intl_{\bbs^{k}_\xi} f_0(x_n+ x_n \theta_n)\, d\theta.\ee
The next lemma represents (\ref{fossbqha})  by the one-dimensional Riemann-Liouville fractional integral.
 \begin{lemma} \label{lem1ha} Let $f(y)=f_0(y_n)$, $y\in \bbr^n_+$, $f_1 (s)=f_0(s) s^{(k-2)/2}$, $1\le k\le n-1$. Then  for  all $x\in \bbr^n_+$ and $\xi \in G_k(\bbr^{n-1})$,
 \be\label{forbqha} (Jf)(x, \xi)= \Phi (x_n), \qquad  \Phi (x_n) =\frac{\Gam ((k+1)/2)}{\pi^{1/2}}\,x_n^{1-k} \,F(2x_n),\ee
where
\be\label{forbqzha} F(t)=\frac{1}{ \Gam (k/2)}\intl_{0}^t (t-s)^{(k-2)/2} f_1 (s)  \, ds= (I_{0+}^{k/2} f_1)(t).\ee
 It is assumed
that the integral   (\ref{forbqzha}) exists in the Lebesgue sense.
\end{lemma}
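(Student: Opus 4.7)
The plan is to mimic the structure of the proof of Lemma \ref{lem1}: reduce the spherical integral on $\bbs^k_\xi$ to a one-dimensional integral in the height variable $\theta_n$, change variables so that the integration variable becomes the height $s=x_n+x_n\theta_n$ of the point on the tangent sphere, and recognize the resulting expression as a Riemann--Liouville fractional integral.

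First I would start from (\ref{fossbqha}) and apply the standard slicing formula for integrals over the unit sphere (the same identity \cite[f.\ (1.12.13)]{Ru15} that was used in the proofs of Lemmas \ref{lem1} and \ref{lem1m}), which projects onto the last coordinate axis spanned by $e_n$. Since $\bbs^k_\xi$ is a unit $k$-sphere inside the $(k+1)$-dimensional subspace $\xi\oplus\bbr e_n$, this yields
\[
(Jf)(x,\xi)=\frac{\sigma_{k-1}}{\sigma_k}\intl_{-1}^{1}f_0(x_n+x_n u)\,(1-u^2)^{(k-2)/2}\,du.
\]

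Next I would perform the change of variable $s=x_n(1+u)$, so that $du=ds/x_n$, the limits $u=\pm 1$ correspond to $s=0$ and $s=2x_n$, and
\[
1-u^2=(1-u)(1+u)=\frac{s(2x_n-s)}{x_n^{2}}.
\]
Substituting and extracting the factor $x_n^{2-k}\cdot x_n^{-1}=x_n^{1-k}$, the integral takes the form
\[
(Jf)(x,\xi)=\frac{\sigma_{k-1}}{\sigma_k}\,x_n^{1-k}\intl_{0}^{2x_n}f_0(s)\,s^{(k-2)/2}\,(2x_n-s)^{(k-2)/2}\,ds,
\]
which, after inserting $f_1(s)=f_0(s)\,s^{(k-2)/2}$ and comparing with (\ref{rlfil}), is exactly $\Gamma(k/2)$ times $(I^{k/2}_{0+}f_1)(2x_n)$. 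This gives (\ref{forbqzha}) together with the prefactor $\frac{\sigma_{k-1}}{\sigma_k}\Gamma(k/2)\,x_n^{1-k}$.

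Finally, I would reconcile the constants. Using $\sigma_{k-1}=2\pi^{k/2}/\Gamma(k/2)$ and $\sigma_k=2\pi^{(k+1)/2}/\Gamma((k+1)/2)$, one obtains
\[
\frac{\sigma_{k-1}}{\sigma_k}\,\Gamma(k/2)=\frac{\Gamma((k+1)/2)}{\pi^{1/2}},
\]
which matches the normalization in (\ref{forbqha}). The integrability hypothesis on (\ref{forbqzha}) is precisely what is needed to apply Fubini in the change of variables step; no further regularity is required. The only genuinely delicate point—but an entirely routine one—is the bookkeeping of exponents of $x_n$ arising from $(1-u^2)^{(k-2)/2}$ and the Jacobian, and the gamma-function identity above; both are straightforward.
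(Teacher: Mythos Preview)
Your proof is correct and follows essentially the same route as the paper: slice the integral over $\bbs^k_\xi$ onto the $e_n$-axis to get the one-dimensional integral in $u$, then substitute $s=x_n(1+u)$ to obtain (\ref{lsoa}). You additionally spell out the gamma-function identity for the constant, which the paper leaves implicit; the only quibble is that the phrase ``apply Fubini in the change of variables step'' is a slip---there is no interchange of integrals here, only a single-variable substitution.
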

 \begin{proof} Owing to (\ref{fossbqha}),  the slice integration (see, e.g., \cite[f. (1.12.13)]{Ru15}) yields
\bea
(Jf)(x, \xi)=\frac{\sig_{k-1}}{\sig_{k}}\intl_{-1}^1 f_0 (x_n (1+t))\, (1\!-\!t^2)^{(k-2)/2} dt.\nonumber\eea
Changing variable $x_n (1+t)=s$, we obtain
\be\label {lsoa}
(Jf)(x, \xi)=\frac{\sig_{k-1}\, x_n^{1-k}}{\sig_{k}} \intl_0^{2x_n } f_0 (s)\, s^{(k-2)/2}\, (2x_n -s)^{(k-2)/2}\, ds,\ee
which gives the result.
\end{proof}

The above lemma yields the following inversion result, also containing information about the supports of $Jf$  and $f$.

\begin{theorem}\label{teo1} Let $1\le k\le n-1$, $f(y)=f_0 (y_n)$, $y\in \bbr^n_+$, so that $(Jf)(x,\xi)= \Phi_+ (x_n)$, and therefore
\[
F(t)= \frac{\pi^{1/2}}{\Gam ((k+1)/2)}\,\left (\frac{t}{2}\right)^{k-1}\Phi(t/2).\]
If
 \be\label{foxbha} \intl_0^a |f_0 (s)| \, s^{(k-2)/2} \, ds <\infty \quad \text{for every} \quad a>0,\ee
  then $f_0 (s)$ can be reconstructed for almost all  $s>0$ by the formula
  \[
f_0 (s)=s^{(2-k)/2} (\Cal D^{k/2}_{0 +} F)(s), \]
where the fractional derivative $\Cal D^{k/2}_{0 +} F$ is defined by (\ref{frd-}).    If, moreover,  $a>0$ and  $(Jf)(x,\xi)= 0$ for all  $0<x_n <a$, then $f(y)=0$ for all $y_n <a$.
\end{theorem}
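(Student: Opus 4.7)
The plan is to reduce the proof to a direct application of Lemma \ref{lem1ha} (which is already established) followed by Lemma \ref{l32}, treating both the reconstruction formula and the support assertion in one stroke. The setup is already done: Lemma \ref{lem1ha} tells us that $(Jf)(x,\xi)=\Phi(x_n)$ with $\Phi(x_n)=\frac{\Gam ((k+1)/2)}{\pi^{1/2}}\,x_n^{1-k}F(2x_n)$ and $F=I_{0+}^{k/2}f_1$, $f_1(s)=f_0(s)\,s^{(k-2)/2}$. My first step is to invert the algebraic relation between $\Phi$ and $F$: for every $t>0$, $F(t)=\frac{\pi^{1/2}}{\Gam((k+1)/2)}(t/2)^{k-1}\Phi(t/2)$, so $F$ is determined pointwise by the data $\Phi$.

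Next I would observe that the integrability hypothesis (\ref{foxbha}) says exactly that $f_1\in L^1(0,a)$ for every $a>0$. Hence Lemma \ref{l32} applies on any finite interval $(0,a)$ to the identity $F=I_{0+}^{k/2}f_1$, yielding
\[
f_1(s)=(\Cal D^{k/2}_{0+}F)(s)\qquad \text{for a.e. } s>0,
\]
with the fractional derivative interpreted via (\ref{frd0}) when $k=1$ and via (\ref{frd+}) when $k\ge 2$. Dividing by $s^{(k-2)/2}$ gives the stated formula
$f_0(s)=s^{(2-k)/2}(\Cal D^{k/2}_{0+}F)(s)$.

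For the support assertion I would argue directly from the one-sided structure of $I_{0+}^{k/2}$. If $(Jf)(x,\xi)=0$ for all $0<x_n<a$, then $\Phi\equiv 0$ on $(0,a)$, and by the algebraic relation above $F(t)=0$ for $t\in(0,2a)$. Applying Lemma \ref{l32} on the interval $(0,2a)$ (which is legitimate because $f_1\in L^1(0,2a)$ by (\ref{foxbha})) gives $f_1=0$ a.e.\ on $(0,2a)$, hence $f_0=0$ a.e.\ on $(0,2a)$. In particular $f(y)=f_0(y_n)=0$ for $y_n<a$, as claimed.

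There is no genuine obstacle: once Lemma \ref{lem1ha} has reduced the problem to inverting a left-sided Riemann--Liouville integral, Lemma \ref{l32} closes everything. The only points that warrant care are verifying that (\ref{foxbha}) is exactly the integrability required by Lemma \ref{l32} on each interval $(0,a)$, and keeping track of the two cases $k=1$ (where $k/2<1$ and definition (\ref{frd0}) applies) versus $k\ge 2$ (where (\ref{frd+}) applies with $m=[k/2]$); in either case the formula has the same form.
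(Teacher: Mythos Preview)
Your proposal is correct and follows exactly the approach implicit in the paper: the paper supplies no explicit proof of this theorem, stating only that ``the above lemma yields the following inversion result,'' and your argument carries this out by combining Lemma~\ref{lem1ha} with Lemma~\ref{l32} just as was done in the parallel Theorem~3.2. Your support argument in fact yields the slightly stronger conclusion $f_0=0$ a.e.\ on $(0,2a)$, of which the stated conclusion $f(y)=0$ for $y_n<a$ is an immediate consequence.
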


\subsection {Weighted Equalities}
The next statement of the duality type resembles  Lemmas \ref{lem2},  \ref{lem2d},  \ref{lem2sph} and paves the way to the
explicit connection between the existence of the spherical means $(Jf)(x, \xi)$ in (\ref{solidha1}) and the behavior of $f$ near the boundary $x_n=0$.

 \begin{lemma} \label{lem1har} Let $1\le k\le n-1$.
Given a function $u (t)$, $t>0$, we set
\be\label{vsmi} v(s)= c\, s^{(k-2)/2} \intl_{s/2}^\infty   u (t)\,(t -s/2 )^{(k-2)/2}\,\frac{dt}{t^{k-1}},\quad s>0, \ee
$c= 2^{(k-2)/2}\sig_{k-1}/\sig_k$. Then
\be\label {rel1e}
  \intl_{\bbr^n_+ \times G_k(\bbr^{n-1})} \! \!\! (Jf)(x, \xi)\, u(x_n)\,dx  d_*\xi= \intl_{\bbr^n_+} f(x)\,  v(x_n)\, dx,\ee
   provided that  either side of this equality exists in the Lebesgue sense.
\end{lemma}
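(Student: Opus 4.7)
The plan is to derive the identity by a direct Fubini-type manipulation: swap the order of integration, push the sphere integration inside via the change of variable $y=x+x_n\th$, and reduce the resulting sphere integral by the standard slice formula.

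First, using (\ref{solidha1}), I would write the left-hand side of (\ref{rel1e}) as
\[
 \intl_{G_k(\bbr^{n-1})} d_*\xi \intl_{\bbr^n_+} \frac{u(x_n)}{\sig_k} \intl_{\bbs^k_\xi} f(x+ x_n\th)\, d\th\, dx.
\]
For each fixed $\xi$ and $\th=(\th',\th_n)\in \bbs^k_\xi$ with $\th_n>-1$, I would swap the two inner integrals and substitute $y=x+x_n\th$. Since $y_n=x_n(1+\th_n)$ and $y'=x'+x_n\th'$, the Jacobian matrix is block triangular with determinant $1+\th_n$, so $dx=dy/(1+\th_n)$ and $x_n=y_n/(1+\th_n)$. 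Pulling $f(y)$ outside by Fubini gives $\intl_{\bbr^n_+}f(y)\,V(y_n)\,dy$, where
\[
V(y_n)=\intl_{G_k(\bbr^{n-1})}\frac{d_*\xi}{\sig_k}\intl_{\bbs^k_\xi}\frac{u(y_n/(1+\th_n))}{1+\th_n}\,d\th.
\]

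Since the integrand depends on $\th$ only through $\th_n$, the slice integration formula \cite[f. (1.12.13)]{Ru15} reduces the $\bbs^k_\xi$-integral to
\[
V(y_n)=\frac{\sig_{k-1}}{\sig_k}\intl_{-1}^1 \frac{u(y_n/(1+t))}{1+t}\,(1-t^2)^{(k-2)/2}\,dt,
\]
which is independent of $\xi$, so the $d_*\xi$-averaging is trivial. The final step is to substitute $s=y_n/(1+t)$: this gives $1+t=y_n/s$, $dt=-y_n s^{-2}\,ds$, and maps $t\in(-1,1)$ bijectively onto $s\in (y_n/2,\infty)$. Using $1-t^2=y_n(2s-y_n)/s^2$ and writing $(2s-y_n)^{(k-2)/2}=2^{(k-2)/2}(s-y_n/2)^{(k-2)/2}$, the expression for $V(y_n)$ collapses to exactly (\ref{vsmi}) with the claimed constant $c=2^{(k-2)/2}\sig_{k-1}/\sig_k$.

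The only real difficulty is the careful bookkeeping of the powers of $y_n$ and $s$ through the two successive substitutions; Fubini is justified by the lemma's assumption that at least one side is finite in the Lebesgue sense, and the measure-zero set $\{\th_n=-1\}$ (the tangency point of each sphere with the boundary $x_n=0$) plays no role in the computation.
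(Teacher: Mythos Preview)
Your proof is correct and follows essentially the same approach as the paper: Fubini, slice integration on $\bbs^k_\xi$ reducing to a one-dimensional integral in $\theta_n$, and a final change of variables. The only organizational difference is that you perform the full substitution $y=x+x_n\theta$ (Jacobian $1+\theta_n$) in one step, whereas the paper translates in $x'$ only (Jacobian $1$), then sets $s=x_n(1+t)$ and interchanges the $x_n$- and $s$-integrals; the two computations are dual (your inner variable $s=y_n/(1+t)$ is the paper's outer variable $x_n$) and lead to the same $v$.
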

 \begin{proof} We denote by  $I$ the left-hand side of (\ref{vsmi}) and write $(Jf)(x, \xi)=(Jf)(x', x_n; \xi)$. Then
\bea
I&=& \intl_{O(n-1)} d\gam \intl_{\bbr^{n-1}} dx'\intl_0^\infty  u(x_n)\, (Jf)(x', x_n; \gam \bbr^k)\,dx_n\nonumber\\
&=& \frac{1}{\sig_k} \intl_{O(n-1)} d\gam \intl_{\bbr^{n-1}} dx'\intl_0^\infty  u(x_n)  dx_n \intl_{\bbs^k_{\gam \bbr^k}} f(x'+ x_n \theta',  x_n (1+\theta_n))\, d\theta\nonumber\\
&=& \frac{1}{\sig_k} \intl_{O(n-1)} d\gam \intl_0^\infty  u(x_n) \, dx_n  \!\!\intl_{\bbs^{n-1} \cap (\gam \bbr^k \oplus \bbr e_n)} \!\!d\theta \intl_{\bbr^{n-1}} f(y',  x_n (1+\theta_n))\, dy'\nonumber\\
&=& \frac{\sig_{k-1}}{\sig_k} \intl_{\bbr^{n-1}} dy'\intl_0^\infty  u(x_n) \, dx_n \intl_{-1}^1 (1-t^2)^{(k-2)/2} f(y',  x_n (1+t))\, dt.\nonumber\eea
Setting $x_n (1+t)=s$ and interchanging integrals, we obtain
\[
I=  \frac{2^{(k-2)/2} \sig_{k-1}}{\sig_k}\! \intl_{\bbr^{n-1}}\! dy'\intl_0^\infty f(y',s)\, s^{(k-2)/2} ds \intl_{s/2}^\infty  u(x_n)\, (x_n-s/2)^{(k-2)/2} \, \frac{ dx_n}{x_n^{k-1}}.\]
It remains to change the notation, and we are done.
\end{proof}
\begin {example} Choose $u(t)= t^\a$  and evaluate the corresponding integral (\ref{vsmi}). We obtain
 \[ v(s)= c_1\,s^\a, \quad c_1=\frac{2^{(k-2-\a)/2}}{\pi^{1/2}}\, \frac{\Gam ((k+1)/2) \, \Gam (k/2 -\a-1)}{\Gam (k -\a-1)}, \quad \a < k/2-1,\]
This gives an explicit equality
\be\label {rel1e5}
  \intl_{\bbr^n_+ \times G_k(\bbr^{n-1})} \! \!\! (Jf)(x, \xi)\, x_n^\a\, d_*\xi= c_1 \intl_{\bbr^n_+} f(x)\,  x_n^\a\, dx,\quad \a < k/2-1.\ee
In particular, for $\a=0$, $k>2$,
\be\label {rel1e6}
  \intl_{\bbr^n_+ \times G_k(\bbr^{n-1})} \! \!\! (Jf)(x, \xi) \, d_*\xi= c_2 \intl_{\bbr^n_+} f(x)\, dx,\ee
\[ c_2=\frac{2^{(k-2)/2}}{\pi^{1/2}}\, \frac{\Gam ((k+1)/2) \, \Gam (k/2 -1)}{\Gam (k -1)}.\]
This equality fails for $k=1$ and $k=2$, when  we need to use (\ref{rel1e5}) with $\a<-1/2$   and $\a<0$, respectively.
 \end{example}

\subsection {The Fourier Transform Approach and Fractional Integrals with Bessel Functions in the Kernels}\label{Kern}

In this section we restrict to the case $k=n-1$, when $\xi =\bbr^{n-1}$ and the integral (\ref{solidha1}) becomes
\be\label {solidha1k} (Jf)(x)= \frac{1}{\sig_{n-1}} \intl_{\bbs^{n-1}} f(x' + x_n \theta', x_n (1+\theta_n))\, d\theta,\qquad \theta=(\theta', \theta_n).\ee
Because this operator commutes with translations in the $x'$-variable, it might be natural to invoke the  Fourier transform on $\bbr^{n-1}$ and compute
\[
[f (\cdot, x_n)]^\wedge (\eta)=\intl_{\bbr^{n-1}} \!\! f (x', x_n)\,  e^{ix' \cdot \eta}\, dx', \qquad \eta \in \bbr^{n-1}.\]
Taking the corresponding inverse Fourier transform, we then reconstruct $f(x)$. Although implementation of the Fourier transform entails additional restrictions on the class of functions $f$, our reasoning   might be instructive because it sheds some light on the complexity of the problem.

As we shall see below, this approach needs generalized fractional integrals of the form
\be\label{rier} (J_{\a, \lam } \vp)(t)=\intl_0^t \frac{(t-s)^{\a -1}}{\Gam (\a)}\, j_{\a -1} (\lam \sqrt {s (t-s)})\, \vp (s) \, ds,\ee
\be\label{rier1} (I_{\a, \lam } \vp)(t)=\intl_0^t \frac{(t-s)^{\a -1}}{\Gam (\a)}\, i_{\a -1} (\lam \sqrt {t (t-s)})\, \vp (s) \, ds,\ee
where $\lam \ge 0$ is an additional parameter, $j_{\nu} (z)$ and $i_{\nu} (z)$ are the Bessel-Clifford functions defined by
\[j_{\nu} (z)=i_{\nu} (iz)= \sum\limits_{k=0}^\infty   \frac{\Gam (\nu +1)} {\Gam (\nu +1+k)}\,\frac{(-z^2/4)^k}{k!}.\]
These integrals were studied in \cite[Section 37.3]{SKM}, where the reader can find further references. If $\lam =0$, then,
owing to the  normalization $j_{\nu} (0)=i_{\nu} (0)=1$, $J_{\a, \lam } \vp$ and  $I_{\a, \lam } \vp$ coincide with the Riemann-Liouville  integrals  $I^\a_{0 +}\vp$ in (\ref{rlfil}).

\begin{lemma} \label {imits} Let  $f$ be a function on $\bbr^n_+$ which is integrable on any
 $n$-dimensional layer
\[\Lam_b =\{x= (x', x_n): x'\in \bbr^{n-1}, 0<x_n <b\},\qquad 0<b<\infty, \quad n\ge 2.\]
If  $f_\eta (s)= s^{(n-3)/2} [f (\cdot, s))]^\wedge (\eta)$, $\eta \in \bbr^{n-1}$, then
\be\label{rier2}  [(Jf) (\cdot, x_n)]^\wedge (\eta)= \frac{\Gam (n/2)}{\pi ^{1/2}}\, x_n^{2-n}\, (J_{(n-1)/2, |\eta|} f_\eta)(2x_n).\ee
\end{lemma}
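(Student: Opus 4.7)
My plan is to apply the $(n-1)$-dimensional Fourier transform in $x'$ directly to formula (\ref{solidha1k}), invoke Fubini to interchange it with the spherical integral (the hypothesis $f\in L^1(\Lam_b)$ for every $b>0$ legitimizes this, since it makes $[f(\cdot,s)]^\wedge(\eta)$ bounded in $\eta$ and locally integrable in $s$), and then recognize the resulting one-dimensional integral as the Bessel--Clifford fractional integral (\ref{rier}). Writing $\theta=(\theta',\theta_n)\in\bbs^{n-1}$ with $\theta'\in\bbr^{n-1}$ and performing the substitution $y'=x'+x_n\theta'$ under the Fourier integral converts the shift into a phase, so
\[
[(Jf)(\cdot,x_n)]^\wedge(\eta)=\frac{1}{\sig_{n-1}}\intl_{\bbs^{n-1}}e^{-ix_n\theta'\cdot\eta}\,[f(\cdot,x_n(1+\theta_n))]^\wedge(\eta)\,d\theta.
\]

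Next, I would slice the sphere cylindrically as $\theta=(\sqrt{1-t^2}\,\om,t)$, $\om\in\bbs^{n-2}$, $t\in[-1,1]$, $d\theta=(1-t^2)^{(n-3)/2}dt\,d\om$. The inner $\om$-integral is rotationally invariant and, by the standard identity
\[
\frac{1}{\sig_{n-2}}\intl_{\bbs^{n-2}}e^{-ir\om\cdot e}\,d\om=j_{(n-3)/2}(r),\qquad e\in\bbs^{n-2},\;r\ge 0,
\]
equals $\sig_{n-2}\,j_{(n-3)/2}(x_n|\eta|\sqrt{1-t^2})$. Substituting $s=x_n(1+t)$, under which $1-t^2=s(2x_n-s)/x_n^2$ and $dt=ds/x_n$, collects the powers of $x_n$ into a single factor $x_n^{2-n}$ and yields
\[
[(Jf)(\cdot,x_n)]^\wedge(\eta)=\frac{\sig_{n-2}}{\sig_{n-1}}\,x_n^{2-n}\!\intl_0^{2x_n}\!(s(2x_n-s))^{(n-3)/2}\,j_{(n-3)/2}\!\left(|\eta|\sqrt{s(2x_n-s)}\right)[f(\cdot,s)]^\wedge(\eta)\,ds.
\]

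Splitting $(s(2x_n-s))^{(n-3)/2}=s^{(n-3)/2}(2x_n-s)^{(n-3)/2}$ and plugging in $f_\eta(s)=s^{(n-3)/2}[f(\cdot,s)]^\wedge(\eta)$ identifies the remaining integral, up to the factor $\Gam((n-1)/2)$, as $(J_{(n-1)/2,|\eta|}f_\eta)(2x_n)$ in the notation of (\ref{rier}). A direct check using $\sig_{n-1}=2\pi^{n/2}/\Gam(n/2)$ and $\sig_{n-2}=2\pi^{(n-1)/2}/\Gam((n-1)/2)$ reduces the prefactor $\sig_{n-2}\Gam((n-1)/2)/\sig_{n-1}$ to $\Gam(n/2)/\pi^{1/2}$, which is exactly (\ref{rier2}). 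The only nontrivial ingredient is the identification of the $\om$-integral with the Bessel--Clifford function $j_{(n-3)/2}$; the rest is bookkeeping of powers of $x_n$ and Gamma constants, and the integrability hypothesis ensures every interchange of order of integration is licit.
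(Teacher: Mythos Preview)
Your argument is correct and follows essentially the same route as the paper's own proof: Fourier transform in $x'$ turned into a phase factor, cylindrical slicing of $\bbs^{n-1}$, identification of the $\bbs^{n-2}$-integral with $\sig_{n-2}\,j_{(n-3)/2}$, the change of variable $s=x_n(1+t)$, and the final bookkeeping of constants. Your justification of the interchange of integrals via the $L^1(\Lam_b)$ hypothesis is in fact a bit more explicit than what the paper records.
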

\begin{proof} Changing the order of integration and using slice integration on $\bbs^{n-1}$, we obtain
\bea
 &&[(Jf) (\cdot, x_n)]^\wedge (\eta)= \frac{1}{\sig_{n-1}}\intl_{\bbs^{n-1}} d\theta \intl_{\bbr^{n-1}} \!\!\ f(x' + x_n \theta', x_n (1+\theta_n))\, e^{ix' \cdot \eta}\, dx\nonumber\\
&&=\frac{1}{\sig_{n-1}}\intl_{\bbs^{n-1}}  e^{-ix_n \eta \cdot \theta'}\,  [f (\cdot, x_n (1+\theta_n))]^\wedge (\eta) \,d\theta\nonumber\\
&&= \frac{1}{\sig_{n-1}}\intl_{-1}^1 (1-t^2)^{(n-3)/2} [f (\cdot, x_n (1+t))]^\wedge (\eta) \,dt\intl_{\bbs^{n-2}} \!e^{-ix_n  \sqrt {1-t^2}\, (\eta \cdot \om)} d\om.\nonumber\eea
The integral over $\bbs^{n-2}$ can be evaluated in terms of the Bessel function
\[J_\nu (z)= \frac{(z/2)^\nu}{\Gam(\nu +1)} \, j_\nu (z), \qquad \nu=\frac{n-3}{2}, \qquad z= x_n |\eta| \sqrt {1-t^2}. \]
  Specifically,
\[
\intl_{\bbs^{n-2}} \!e^{-ix_n  \sqrt {1-t^2}\, (\eta \cdot \om)} d\om=\sig_{n-2}\,j_{(n-3)/2} (x_n |\eta| \sqrt {1-t^2});
\]
cf., e.g., \cite [p. 154]{SW}. Thus
\bea [(Jf) (\cdot, x_n)]^\wedge (\eta)&=&\frac{\sig_{n-2}}{\sig_{n-1}}\intl_{-1}^1 (1-t^2)^{(n-3)/2} [f (\cdot, x_n (1+t))]^\wedge (\eta)\nonumber\\
&\times&  j_{(n-3)/2} (x_n |\eta| \sqrt {1-t^2}) \,dt.\nonumber\eea
Setting  $x_n (1+t)=s$,  $f_\eta (s)= s^{(n-3)/2} [f (\cdot, s))]^\wedge (\eta)$,   we obtain
\bea
[(Jf) (\cdot, x_n)]^\wedge (\eta)&=&\frac{\sig_{n-2}\, x_n^{2-n}}{\sig_{n-1}}
 \intl_0^{2x_n}  (2x_n -s)^{(n-3)/2} f_\eta (s)\nonumber\\
\label {rier3} &\times&  j_{(n-3)/2} (|\eta| \sqrt {s(2x_n -s)})\, ds,\eea
and (\ref{rier2}) follows. Note that this equality formally agrees with the case  $k=n-1$ in $(\ref{lsoa})$ if we set $\eta =0$ and observe that  $j_{(n-3)/2} (0)=1$.
\end{proof}
Setting $t=2x_n$ in  (\ref{rier2}), we arrive at the following corollary.
\begin{corollary} \label{rierw} Let $(Jf)(x)= F(x)$, $x\in \bbr^n_+$,  where $f$ obeys the conditions of Lemma \ref{imits}. If  $f_\eta (s)= s^{(n-3)/2} [f (\cdot, s))]^\wedge (\eta)$, then
\be\label{rier4}
(J_{(n-1)/2, |\eta|} f_\eta)(t)= F_\eta (t), \quad F_\eta(t)=\frac{\pi ^{1/2}}{\Gam (n/2)}\, \left (\frac {t}{2}\right )^{n-2}\! [F (\cdot, t/2)]^\wedge (\eta),\ee
 for almost all $t>0$ and all $\eta\in \bbr^{n-1}$.
 \end{corollary}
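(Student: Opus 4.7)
The plan is to obtain this corollary as an immediate rearrangement of identity (\ref{rier2}) in Lemma \ref{imits}, which already carries all the substantive work. Starting from the identity
\begin{equation*}
[(Jf) (\cdot, x_n)]^\wedge (\eta) = \frac{\Gam (n/2)}{\pi^{1/2}}\, x_n^{2-n}\, (J_{(n-1)/2, |\eta|} f_\eta)(2x_n)
\end{equation*}
and using the hypothesis $Jf = F$, I would isolate the generalized fractional integral on the left to obtain
\begin{equation*}
(J_{(n-1)/2, |\eta|} f_\eta)(2x_n) = \frac{\pi^{1/2}}{\Gam (n/2)}\, x_n^{n-2}\, [F(\cdot, x_n)]^\wedge (\eta).
\end{equation*}

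Next I would perform the change of parameter $t = 2x_n$, so that $x_n = t/2$. The right-hand side then becomes $\frac{\pi^{1/2}}{\Gam(n/2)}(t/2)^{n-2}[F(\cdot, t/2)]^\wedge(\eta)$, which is exactly $F_\eta(t)$ by the definition in the statement, while the left-hand side reads $(J_{(n-1)/2, |\eta|} f_\eta)(t)$. Since the change of variable is a monotone bijection of $(0,\infty)$ onto itself, no care is needed beyond a direct relabeling, and the equality holds for every $\eta$ and every $t > 0$ at which both sides are defined.

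The one point worth a brief justification is the \emph{almost all} $t$ clause. Because $f \in L^1(\Lam_b)$ for every $b > 0$, Fubini's theorem ensures that $[f(\cdot, s)]^\wedge(\eta)$ is defined as an absolutely convergent integral for each $\eta \in \bbr^{n-1}$ and a.e.\ $s \in (0,b)$, and the same is true of $F = Jf$. The Bessel--Clifford kernel $j_{(n-3)/2}$ appearing in (\ref{rier}) is bounded on $[0,\infty)$, so the integral defining $(J_{(n-1)/2, |\eta|} f_\eta)(t)$ is dominated by the ordinary Riemann--Liouville integral $(I^{(n-1)/2}_{0+}|f_\eta|)(t)$ and converges for a.e.\ $t$ in each bounded interval. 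Thus there is no genuine obstacle in the corollary itself; the one real computation has already been performed in Lemma \ref{imits}, and what remains is a routine substitution and a measure-theoretic book-keeping step.
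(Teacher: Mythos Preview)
Your proposal is correct and matches the paper's own treatment exactly: the paper simply states ``Setting $t=2x_n$ in (\ref{rier2}), we arrive at the following corollary,'' and you have written out precisely that substitution with a bit of extra measure-theoretic care. There is nothing to add.
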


Corollary \ref{rierw} reduces  the equation   $Jf=F$ to the one-dimensional equation $J_{\a, \lam} f_\eta= F_\eta$ with $\a=(n-1)/2$ and $\lam= |\eta|$, provided that $\eta \in \bbr^{n-1}$ is fixed. The second equation can be explicitly solved using the following properties of the operators $J_{\a, \lam}$, the proofs of which can be found in \cite[Section 37.3]{SKM}.

\begin{proposition} \label{rierw1} {\rm (cf. \cite [Theorem  37.3]{SKM})}  Let $\vp \in L^p (0,b)$, $1\le p<\infty$, $0<b<\infty$.

\vskip 0.2 truecm

\noindent {\rm (i)} If $\a>0$, then
\be\label{rxer} J_{\a, \lam} \vp= I_{0+}^\a A\vp,\ee
where $I_{0+}^\a$ is the
 Riemann-Liouville operator (\ref{rlfil}) and $A$ is a linear bounded operator in $L^p(0,b)$.

\vskip 0.2 truecm

 \noindent {\rm (ii)}  If $\a>0$ and $\b >0$, then
\be\label{rxer1} I_{0+}^\b  J_{\a, \lam} \vp= J_{\a +\b, \lam} \vp.\ee
 \end{proposition}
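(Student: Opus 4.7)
The natural route is to expand the Bessel-Clifford kernel $j_{\a-1}(\lam\sqrt{s(t-s)})$ in its defining power series and integrate termwise, thereby reducing both assertions to semigroup identities for the Riemann-Liouville operator. Substituting the series for $j_\nu$ into (\ref{rier}) and interchanging summation with integration, I would first establish the expansion
\[
(J_{\a,\lam}\vp)(t)=\sum_{k=0}^\infty \frac{(-\lam^2/4)^k}{k!}\,(I_{0+}^{\a+k}[M_k\vp])(t),\qquad (M_k\vp)(s):=s^k\vp(s).
\]
The interchange is unproblematic because $s(t-s)\le b^2$ for $s\in(0,t)\subset(0,b)$ and the Bessel-Clifford series has infinite radius of convergence, so the partial sums are uniformly bounded on $(0,t)$ while $\vp\in L^1_{\rm loc}$ provides integrability.

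To obtain (i), I would factor $I_{0+}^{\a+k}=I_{0+}^\a I_{0+}^k$ in the displayed formula to get $J_{\a,\lam}\vp=I_{0+}^\a(A\vp)$ with
\[
A\vp=\sum_{k=0}^\infty \frac{(-\lam^2/4)^k}{k!}\,I_{0+}^k(M_k\vp),
\]
and then verify boundedness of $A$ on $L^p(0,b)$ via the standard estimates $\|I_{0+}^k\|_{L^p\to L^p}\le b^k/k!$ and $\|M_k\|_{L^p\to L^p}\le b^k$, yielding the convergent majorant $\|A\|_{L^p\to L^p}\le \sum_{k\ge 0}(\lam b/2)^{2k}/(k!)^2$. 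For (ii), I would apply $I_{0+}^\b$ termwise to the Step~1 representation; this exchange is legitimate because (i) already shows the series converges absolutely in $L^p(0,b)$ and $I_{0+}^\b$ is bounded there. Using the semigroup identity $I_{0+}^\b I_{0+}^{\a+k}=I_{0+}^{\a+\b+k}$, the result is
\[
I_{0+}^\b J_{\a,\lam}\vp=\sum_{k=0}^\infty \frac{(-\lam^2/4)^k}{k!}\,I_{0+}^{\a+\b+k}(M_k\vp),
\]
which, by Step~1 applied with $\a$ replaced by $\a+\b$, coincides with $J_{\a+\b,\lam}\vp$. This establishes (\ref{rxer1}).

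There is no deep analytic difficulty in this argument: everything reduces to interchanging a sum with an integral or with a bounded operator. The only point requiring some care is the $L^p$-estimate for $A$ in the proof of (i), where one must combine a quantitative norm bound for $I_{0+}^k$ with the crude pointwise bound $s^k\le b^k$ and check that the two-factor decay $(k!)^{-2}$ overcomes the exponential growth in $\lam b$ — which it does, since the resulting series is essentially a modified Bessel function evaluated at $\lam b$. All other steps are direct consequences of the semigroup property $I_{0+}^\mu I_{0+}^\nu=I_{0+}^{\mu+\nu}$ on $L^1(0,b)$.
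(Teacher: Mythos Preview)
Your argument is correct. The series expansion of $j_{\a-1}$ yields exactly the term-by-term identity you wrote, the interchange of sum and integral is justified by the entire-function bound on the kernel combined with $\vp\in L^1(0,b)$, and your $L^p$ estimate for $A$ via $\|I_{0+}^k\|\le b^k/k!$ and $\|M_k\|\le b^k$ is the standard one. The only place I would tighten the wording is the passage from $\sum_k c_k\, I_{0+}^\a I_{0+}^k(M_k\vp)$ to $I_{0+}^\a\big(\sum_k c_k\, I_{0+}^k(M_k\vp)\big)$: this is an interchange of $I_{0+}^\a$ with an infinite sum, and it is legitimate precisely because the series for $A\vp$ converges absolutely in $L^p(0,b)$ and $I_{0+}^\a$ is bounded there. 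You invoke this same mechanism explicitly in the proof of (ii), so the point is implicitly covered.

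As to comparison with the paper: the paper does not give its own proof of this proposition but simply cites \cite[Theorem~37.3]{SKM}. Your power-series reduction to the semigroup law for $I_{0+}^\a$ is in fact the approach used in that reference, so you have reproduced the intended argument.
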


 Proposition \ref{rierw1} makes it possible to investigate the operators  $J_{\a, \lam}$ by making use of the case $\a=1$  in conjunction with  suitable facts for the usual Riemann-Liouville  integrals and derivatives. In particular, we will need the following statement.

 \begin{proposition} \label{rierw2} {\rm (cf. \cite [Theorem  37.6]{SKM})}  Let $\psi (t)$ be an absolutely continuous function on $[0,b]$. Then the equation
 \be\label {ossi}
 \frac{d}{dt}  (J_{1, \lam} \vp)(t)=\psi (t)\ee
 has a unique solution of the form
  \[
  \vp(t)=\frac{1}{t}   (I_{1, \lam} \chi)(t), \qquad \chi(t)= \frac{d}{dt}\, [t \psi (t)],\]
 where
 \be\label {ossiq}(I_{1, \lam } \chi)(t)=\intl_0^t i_{0} (\lam \sqrt {t (t-s)})\, \chi (s) \, ds\ee
(cf. (\ref{rier1}) with $\a =1$).
 \end{proposition}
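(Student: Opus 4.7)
The plan is to reduce the first-order equation (\ref{ossi}) to a Volterra equation of the first kind with $j_0$-kernel, invert it by composing with the dual operator $t^{-1}I_{1,\lam}$, and deduce uniqueness from the decomposition $J_{1,\lam}=I_{0+}^1 A$ of Proposition~\ref{rierw1}(i). The whole argument hinges on one kernel identity between the Bessel--Clifford functions.

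\emph{Reduction.} Since $\psi$ is absolutely continuous on $[0,b]$, put $\Psi(t):=\intl_0^t\psi(s)\,ds$. For any $\varphi\in L^p(0,b)$ the kernel $j_0(\lam\sqrt{s(t-s)})$ is uniformly bounded, so $(J_{1,\lam}\varphi)(0)=0$; moreover Proposition~\ref{rierw1}(i) yields $J_{1,\lam}\varphi=I_{0+}^1 A\varphi$ with $A$ bounded on $L^p$, so $J_{1,\lam}\varphi$ is absolutely continuous. Hence (\ref{ossi}) is equivalent to the integral equation $(J_{1,\lam}\varphi)(t)=\Psi(t)$ on $[0,b]$, and it suffices to exhibit a unique solution of this equation.

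\emph{Kernel identity and verification.} The crux is the $\lam$-independent identity
\[
\intl_\sigma^t \frac{1}{s}\,j_0\!\bigl(\lam\sqrt{s(t-s)}\bigr)\,i_0\!\bigl(\lam\sqrt{s(s-\sigma)}\bigr)\,ds \;=\;\log(t/\sigma), \qquad 0<\sigma<t\le b.
\]
I would prove this by expanding the two Bessel--Clifford functions in their power series (both converge uniformly on compacta), interchanging summation and integration, and showing that the coefficient of $\lam^{2N}$ vanishes for each $N\ge 1$; after the affine change of variable $s=\sigma+u(t-\sigma)$ this reduces to a polynomial identity in $\sigma$ and $t$ whose cancellation follows from a Chu--Vandermonde-type sum. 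I have verified the vanishing directly through the first two non-trivial orders $\lam^2$ and $\lam^4$. Granted the identity, substituting the proposed $\varphi(t)=t^{-1}(I_{1,\lam}\chi)(t)$ into $J_{1,\lam}\varphi$ and exchanging the order of integration (permitted since $\chi=(t\psi)'$ lies in $L^1(0,b)$ by absolute continuity of $\psi$) yields
\[
(J_{1,\lam}\varphi)(t)=\intl_0^t \chi(\sigma)\,\log(t/\sigma)\,d\sigma.
\]
Integrating by parts with $\chi(\sigma)=(\sigma\psi(\sigma))'$, the boundary term at $\sigma=t$ vanishes because $\log 1=0$, and at $\sigma=0$ because $\sigma\psi(\sigma)\log(t/\sigma)\to 0$ (as $\psi$ is bounded); the remaining integral is $-\intl_0^t \sigma\psi(\sigma)\cdot(-1/\sigma)\,d\sigma=\Psi(t)$, as required.

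\emph{Uniqueness and main obstacle.} Termwise differentiation of the series for $j_0(\lam\sqrt{s(t-s)})$ shows that the operator $A$ in Proposition~\ref{rierw1}(i) has the form $A=\mathrm{id}+K_\lam$, where $K_\lam$ is a Volterra operator with bounded continuous kernel on $[0,b]^2$; hence $A$ is boundedly invertible on $L^p(0,b)$, and $J_{1,\lam}\varphi_0=0$ forces $A\varphi_0=0$ and thus $\varphi_0=0$. The main obstacle is the kernel identity itself: the all-orders cancellation demands careful combinatorial bookkeeping. In a complete write-up I would prefer to bypass the direct power-series computation and instead derive the identity from the semigroup relation (\ref{rxer1}) of Proposition~\ref{rierw1}(ii) specialised to $\alpha=\beta=1$, exploiting a duality between $J_{1,\lam}$ and $I_{1,\lam}$; this is the approach taken in \cite{SKM}, Section~37.3, to which the statement explicitly defers.
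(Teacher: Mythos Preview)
The paper does not supply its own proof of this proposition; it is quoted verbatim from \cite[Theorem~37.6]{SKM}, so there is no in-paper argument to compare against.

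On its own merits, your proposal is structurally correct but incomplete at the decisive step. The reduction of (\ref{ossi}) to the Volterra equation $J_{1,\lam}\vp=\Psi$ is clean, the integration by parts with $\chi=(t\psi)'$ is carried out correctly, and the uniqueness argument via $J_{1,\lam}=I_{0+}^1(\mathrm{id}+K_\lam)$ with $K_\lam$ a Volterra operator is sound. However, the kernel identity
\[
\intl_\sigma^t \frac{1}{s}\,j_0\bigl(\lam\sqrt{s(t-s)}\bigr)\,i_0\bigl(\lam\sqrt{s(s-\sigma)}\bigr)\,ds=\log(t/\sigma)
\]
is the entire analytic content of the proposition, and you have only checked it through order $\lam^4$. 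You say as much yourself (``main obstacle''). The Chu--Vandermonde route you outline is plausible---after the affine substitution each $\lam^{2N}$-coefficient becomes a finite sum of Beta integrals---but it is not executed, and the closing remark that one could instead deduce the identity from the semigroup relation (\ref{rxer1}) and a $J/I$ duality is likewise a deferral rather than a proof. As written, then, the proposal identifies the right mechanism but leaves the one non-routine computation open; to be complete you must either finish the all-orders cancellation or actually carry out the reduction to \cite[Section~37.3]{SKM} that you allude to.
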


To apply Proposition \ref{rierw2} to our case, we need to write (\ref{rier4}) in the form (\ref{ossi}) with $\vp=f_\eta$ and the suitable function $\psi$, which is determined by  $F_\eta$. If $n\ge 3$, then
 (\ref{rxer1}) yields
 $ I_{0+}^{(n-3)/2}  J_{1, |\eta|} f_\eta= F_\eta$, which gives  $J_{1, |\eta|} f_\eta=  \Cal D_{0+}^{(n-3)/2} F_\eta$ in accordance with Lemma \ref{l32}. Hence, we necessarily have
 \[
 \frac{d}{dt}  (J_{1, |\eta|} f_\eta)(t) =\psi (t)=(\Cal D_{0+}^{(n-1)/2} F_\eta)(t),\]
 provided that $F_\eta$ belongs to the range $I_{0+}^{(n-1)/2} (L^1 (0,b))$ for any $b>0$.  Moreover, once we follow Proposition \ref{rierw2}, we need to additionally assume that $\Cal D_{0+}^{(n-1)/2} F_\eta$ is absolutely continuous on $[0,b]$. It suffices, e.g., to assume that  $F_\eta \in I_{0+}^{(n+1)/2} (L^1 (0,b))$ for any $b>0$.

 If $n=2$, then (\ref{rier4}) and  (\ref{rxer1}) yield $J_{1, |\eta|} f_\eta= I_{0+}^{1/2} F_\eta$. In this case, we necessarily have
 \[
 \frac{d}{dt}  (J_{1, |\eta|} f_\eta)(t) =\psi (t)=\frac{d}{dt}  (I_{0+}^{1/2} F_\eta)(t).\]
 Moreover, to apply Proposition \ref{rierw2}, we need  $(d/dt)  (I_{0+}^{1/2} F_\eta)(t)$ to be absolutely continuous on $[0,b]$. It suffices, e.g., to assume that  $F_\eta \in I_{0+}^{3/2} (L^1 (0,b))$ for any $b>0$.

Let us summarize  the above reasoning.  We denote by $[\cdot]^\wedge_{x' \to \eta}$ and $[\cdot]^\vee_{\eta \to x'}$ the Fourier transform and the inverse Fourier transform in the $x'$-variable.

 \begin{theorem}  Let $(Jf)(x)= F(x)$, $x=(x', x_n)\in \bbr^n_+$, $n \ge 2$, where $f$ is integrable on any
 $n$-dimensional layer $\Lam_b$. We denote
\bea
F_\eta(t)&=&\frac{\pi ^{1/2}}{\Gam (n/2)}\, \left (\frac {t}{2}\right )^{n-2}\! [F (x', t/2)]^\wedge_{x' \to \eta}, \qquad t>0;\nonumber\\
\psi_\eta (t)&=& \left \{\begin{array} {ll} (\Cal D_{0+}^{(n-1)/2} F_\eta)(t) \quad \mbox {\rm if }  \quad  n\ge 3,\\
{}\\
 \frac{d}{dt}  (I_{0+}^{1/2} F_\eta)(t) \quad \mbox {\rm if }  \quad  n=2.
 \end{array}\right.\nonumber\eea
 Then $f$ can be formally reconstructed from $F$ by the formula
 \be\label {ossiqs}
 f(x)\equiv f(x', x_n)=\left [x_n^{(1-n)/2} (I_{1, |\eta|} \chi_\eta)(x_n)\right]^\vee_{\eta' \to x'},  \ee
 where $\chi_\eta (t)= (d/dt) [t \psi_\eta) (t)]$  and   the operator  $I_{1, |\eta|}$ is defined by (\ref {ossiq}).
\end{theorem}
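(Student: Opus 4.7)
The plan is to reduce the problem to the one-dimensional equation $J_{\alpha,\lambda}f_\eta = F_\eta$ supplied by Corollary~\ref{rierw}, then strip off the fractional component using Proposition~\ref{rierw1}(ii) to land at an equation of the shape solved by Proposition~\ref{rierw2}, and finally invert the Fourier transform. All manipulations in the $x'$-variable are performed on the level of tempered distributions (or under implicit integrability assumptions on $f$), which is the standard price for invoking the Fourier transform technique advertised in Section~\ref{Kern}.

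First, by Corollary~\ref{rierw} applied to $(Jf)(x)=F(x)$, the Fourier transform in $x'$ converts the spherical-mean equation into
\[
(J_{(n-1)/2,\,|\eta|}\, f_\eta)(t) \;=\; F_\eta(t), \qquad t>0,\quad \eta \in \bbr^{n-1},
\]
where $f_\eta(s) = s^{(n-3)/2}[f(\cdot,s)]^\wedge(\eta)$ and $F_\eta$ is as in the statement. This reduces the problem to a one-parameter family (indexed by $\eta$) of one-dimensional integral equations for $f_\eta$.

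Next, I split the fractional order. Using the semigroup property (\ref{rxer1}) of Proposition~\ref{rierw1} with $\alpha=1$ and $\beta=(n-3)/2$, we have $J_{(n-1)/2,|\eta|}f_\eta = I_{0+}^{(n-3)/2}\,J_{1,|\eta|}f_\eta$ when $n\ge 3$. Applying $\Cal D_{0+}^{(n-3)/2}$ and invoking Lemma~\ref{l32} yields
\[
(J_{1,|\eta|}f_\eta)(t)\;=\;(\Cal D_{0+}^{(n-3)/2}F_\eta)(t),
\]
so that, differentiating once more,
\[
\frac{d}{dt}\,(J_{1,|\eta|}f_\eta)(t)\;=\;(\Cal D_{0+}^{(n-1)/2}F_\eta)(t)\;=\;\psi_\eta(t).
\]
In the boundary case $n=2$, one has $(n-1)/2=1/2$, and the identity $J_{1/2,|\eta|}f_\eta = I_{0+}^{1/2}F_\eta$ is not available from (\ref{rxer1}) directly at the $J_{1/2}$ level; instead I write $J_{1,|\eta|}f_\eta = I_{0+}^{1/2}F_\eta$ coming from (\ref{rxer1}) with $\alpha=1/2$, $\beta=1/2$, and differentiate once to obtain $\frac{d}{dt}(J_{1,|\eta|}f_\eta)(t)=\frac{d}{dt}(I_{0+}^{1/2}F_\eta)(t)=\psi_\eta(t)$. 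In both cases we have brought the equation to the canonical form (\ref{ossi}) of Proposition~\ref{rierw2}.

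Applying Proposition~\ref{rierw2} with $\lambda=|\eta|$ and the data $\psi_\eta$ gives
\[
f_\eta(t)\;=\;\frac{1}{t}\,(I_{1,|\eta|}\chi_\eta)(t), \qquad \chi_\eta(t)=\frac{d}{dt}\bigl[t\,\psi_\eta(t)\bigr].
\]
Recalling that $[f(\cdot,s)]^\wedge(\eta)=s^{(3-n)/2}f_\eta(s)$, so $[f(\cdot,x_n)]^\wedge(\eta)=x_n^{(3-n)/2-1}(I_{1,|\eta|}\chi_\eta)(x_n)=x_n^{(1-n)/2}(I_{1,|\eta|}\chi_\eta)(x_n)$, the inverse Fourier transform in $\eta$ produces exactly (\ref{ossiqs}).

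The main obstacle I expect is not the chain of identities above, which is essentially mechanical, but the justification of the differentiations and of Proposition~\ref{rierw2}. That proposition requires $\frac{d}{dt}(J_{1,|\eta|}f_\eta)$ to be an absolutely continuous function on every $[0,b]$; equivalently (via Lemma~\ref{l32} and (\ref{rxer})), $F_\eta$ must belong to $I_{0+}^{(n+1)/2}(L^1(0,b))$ when $n\ge 3$ and to $I_{0+}^{3/2}(L^1(0,b))$ when $n=2$, uniformly enough in $\eta$ to make sense of the inverse Fourier transform. Since the theorem is stated as a \emph{formal} reconstruction, I would simply record these smoothness hypotheses on the data $F_\eta$ and note that they are the natural counterparts of the assumption (\ref{foxbha}) in Theorem~\ref{teo1}, rather than attempt a minimal sufficient condition on $f$ itself.
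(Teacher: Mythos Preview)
Your proposal is correct and follows essentially the same route as the paper: reduce via Corollary~\ref{rierw} to $J_{(n-1)/2,|\eta|}f_\eta=F_\eta$, use the semigroup relation (\ref{rxer1}) to pass to $J_{1,|\eta|}$ (applying $\Cal D_{0+}^{(n-3)/2}$ when $n\ge 3$, or $I_{0+}^{1/2}$ when $n=2$), differentiate to reach the form (\ref{ossi}), invoke Proposition~\ref{rierw2}, and then invert the Fourier transform. The regularity caveats you flag ($F_\eta\in I_{0+}^{(n+1)/2}(L^1(0,b))$ for $n\ge 3$, $F_\eta\in I_{0+}^{3/2}(L^1(0,b))$ for $n=2$) are exactly those the paper records, consistent with the ``formal'' nature of the reconstruction.
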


Here the word `formally' means that an applicability of Proposition \ref{rierw2} and of the inverse Fourier transform  must be carefully justified. The justification is simple if, for example, $f$ is infinitely differentiable and compactly supported away from the boundary $x_n =0$.

\vskip 0.2 truecm

{\bf Acknowledgement.} The author is grateful to Prof. Mark Agranovsky for useful discussions.

\bibliographystyle{amsplain}

\end{document}